\theoremstyle{definition}
\newtheorem{theorem}{Theorem}[section]
\newtheorem{definition}[theorem]{Definition}
\newtheorem{lemma}[theorem]{Lemma}
\newtheorem{proposition}[theorem]{Proposition}
\newtheorem*{theorem*}{Theorem}
\theoremstyle{remark}
\newtheorem{remark}[theorem]{Remark}
\renewcommand{\dim}{\operatorname{dim}}
\renewcommand{\H}{\operatorname{H}}
\newcommand{\rk}{\operatorname{rk}}
\renewcommand{\and}{\qquad\text{and}\qquad}
\newcommand{\uJ}{\underline{\operatorname{J}}}
\newcommand{\CH}{\operatorname{CH}}
\newcommand{\uCH}{\underline{\CH}}
\newcommand{\upsi}{\underline{\psi}}
\newcommand{\uvarphi}{\underline{\varphi}}
\renewcommand{\mid}{\hspace{0.3mm}|\hspace{0.3mm}}
\DeclareRobustCommand{\eulerian}{\genfrac<>{0pt}{}}
\title{A semi-small decomposition of the Chow ring of a matroid}
\author{Tom Braden}
\address{Department of Mathematics and Statistics, University of Massachusetts, Amherst, MA.}
\email{braden@math.umass.edu}
\author{June Huh}
\address{School of Mathematics, Institute for Advanced Study, Princeton, NJ.}
\email{junehuh@ias.edu}
\author{Jacob P. Matherne}
\address{Department of Mathematics, University of Oregon, Eugene, OR, and Max-Planck-Institut f\"{u}r Mathematik, Bonn, Germany.}
\email{matherne@uoregon.edu}
\author{Nicholas Proudfoot}
\address{Department of Mathematics, University of Oregon, Eugene, OR.}
\email{njp@uoregon.edu}
\author{Botong Wang}
\address{Department of Mathematics, University of Wisconsin-Madison, Madison, WI.}
\email{wang@math.wisc.edu}
\thanks{% Tom Braden received support from.  
June Huh received support from NSF Grant DMS-1638352 and the Ellentuck Fund.  Jacob Matherne received support from NSF Grant DMS-1638352, the Association of Members of the Institute for Advanced Study, and the Max Planck Institute for Mathematics in Bonn.  Nicholas Proudfoot received support from
NSF Grant DMS-1565036.  Botong Wang received support from NSF Grant DMS-1701305 and the Alfred P. Sloan foundation.}
\begin{document}

\begin{abstract}
We give a semi-small orthogonal decomposition of the Chow ring of a matroid $\mathrm{M}$. %which  resemble the decomposition theorem for semi-small maps.
The decomposition is used to give simple proofs of Poincar\'e duality, the hard Lefschetz theorem, and the Hodge--Riemann relations  for the Chow ring, recovering the main result of \cite{AHK}. %Adiprasito, Katz, and the second author. 
We also %introduce the augmented Chow ring of $\mathrm{M}$ and 
show that a similar semi-small orthogonal decomposition holds for the augmented Chow ring of $\mathrm{M}$.
%give a similar decomposition  for the augmented Chow ring of $\mathrm{M}$,
%a module over the graded M\"obius algebra of $\mathrm{M}$ that extends the Chow ring of $\mathrm{M}$.
\end{abstract}

\maketitle

\section{Introduction}\label{SectionIntroduction}

%Let $E$ be a finite set.
A \emph{matroid} $\mathrm{M}$ on a finite set $E$ is a nonempty collection of subsets of $E$, called \emph{flats} of $\mathrm{M}$,
that satisfies the following properties:
\begin{enumerate}[(1)]\itemsep 5pt
\item The intersection of any two flats  is a flat.
%\item If $F_1$ and $F_2$ are flats of $\mathrm{M}$, then their intersection $F_1 \cap F_2$ is a flat of $\mathrm{M}$.
\item For any flat $F$, any element in $E \setminus F$ is contained in exactly one flat  that is minimal among the flats strictly containing $F$.
\end{enumerate}
Throughout, we suppose in addition that $\mathrm{M}$ is a \emph{loopless} matroid:
\begin{enumerate}[(1)]\itemsep 5pt
\item[(3)]The empty subset of $E$ is a flat.
\end{enumerate}
We write $\mathscr{L}(\mathrm{M})$ for the lattice of all flats of $\mathrm{M}$.
Every maximal flag of proper flats of $\mathrm{M}$ has the same cardinality $d$, called the \emph{rank} of $\mathrm{M}$.
A matroid can be equivalently defined in terms of its \emph{independent sets} or the \emph{rank function}.
For background  in matroid theory, we refer to \cite{Oxley} and \cite{Welsh}. 

%For any proper flat $F$ of $\mathrm{M}$, we set\footnote{The symbols $\mathrm{M}^F$ and $\mathrm{M}_F$
%appear inconsistently in the literature, sometimes this way and sometimes interchanged.
%The localization is frequently called the restriction.  On the other hand, the contraction is also sometimes called the restriction,
%so we avoid the word restriction to minimize ambiguity.}
%\begin{align*}
%\mathrm{M}^F&\coloneq\text{the localization
% of $\mathrm{M}$ at $F$, a loopless matroid on $F$ of rank equal to $\text{rk}_\mathrm{M}(F)$},\\
%\mathrm{M}_F&\coloneq\text{the contraction of $\mathrm{M}$ by $F$, a loopless matroid on $E \setminus F$  of rank equal to $d-\text{rk}_\mathrm{M}(F)$}.
%\end{align*}
%We refer to \cite{Oxley} and \cite{Welsh} for the localization, the contraction, and other basic notions in matroid theory.

The first aim of the present paper is to decompose the Chow ring of $\mathrm{M}$ as a module over the Chow ring of the deletion $\mathrm{M} \setminus i$ (Theorem \ref{TheoremUnderlinedDecomposition}).
The decomposition resembles the decomposition of the cohomology ring of a projective variety induced by a semi-small map.
In Section \ref{Section4}, we use the decomposition to give simple proofs of Poincar\'e duality, the hard Lefschetz theorem, and the Hodge--Riemann relations  for the Chow ring, recovering the main result of \cite{AHK}.  

%In addition, Theorems   \ref{TheoremChowKahlerPackage}--\ref{TheoremSimplexDecomposition} below will play a role in the forthcoming paper \cite{BHMPW}, 
%where we use the decomposition of $\mathrm{CH}(\mathrm{M})$ into indecomposable $\mathrm{H}(\mathrm{M})$-modules
%to prove the Top-Heavy conjecture along with the nonnegativity of the coefficients of 
%the Kazhdan--Lusztig polynomial of $\mathrm{M}$.

The second aim of the present paper is to introduce the augmented Chow ring of $\mathrm{M}$, which contains the graded M\"{o}bius algebra of $\mathrm{M}$ as a subalgebra.  We give an analogous semi-small decomposition of the augmented Chow ring of $\mathrm{M}$ as a module over the augmented Chow ring of the deletion $\mathrm{M} \setminus i$ (Theorem \ref{TheoremDecomposition}), and use this
to prove Poincar\'e duality, the hard Lefschetz theorem, and the Hodge--Riemann relations for the augmented Chow ring.  
These results will play a major role in the forthcoming paper \cite{BHMPW}, where we will prove the Top-Heavy conjecture along with the nonnegativity of the coefficients of the Kazhdan--Lusztig polynomial of a matroid.

\subsection{} 
Let $\underline{S}_\mathrm{M}$  be the ring of polynomials with variables labeled by the nonempty proper flats of $\mathrm{M}$:
\[
\underline{S}_\mathrm{M} \coloneq \mathbb{Q}[x_F\mid \text{$F$ is a nonempty proper flat of $\mathrm{M}$}].
\]
The \emph{Chow ring} of $\mathrm{M}$, introduced by Feichtner and Yuzvinsky in \cite{FY}, is the quotient algebra\footnote{A slightly different presentation for the Chow ring of $\mathrm{M}$ was used in \cite{FY} in a more general context. The present description was used in \cite{AHK}, where the Chow ring of $\mathrm{M}$ was denoted $A(\mathrm{M})$.
 For a comparison of the two presentations, see \cite{BES}.}
\[
\underline{\mathrm{CH}}(\mathrm{M}) \coloneq \underline{S}_\mathrm{M}/(\underline{I}_\mathrm{M}+\underline{J}_\mathrm{M}),
\]
where $\underline{I}_\mathrm{M}$ is the ideal generated by the linear forms
\[
\sum_{i_1 \in F} x_F -\sum_{i_2 \in F} x_F, \ \ \text{for every pair of distinct elements $i_1$ and $i_2$ of $E$},
\]
and $\underline{J}_\mathrm{M}$  is the ideal generated by the quadratic monomials
\[
x_{F_1}x_{F_2}, \ \ \text{for every pair of incomparable nonempty proper flats $F_1$ and $F_2$ of $\mathrm{M}$.}
\]
When $E$ is nonempty, the Chow ring of $\mathrm{M}$ admits a \emph{degree map}  %the unique linear isomorphism
\[
\underline{\deg}_\mathrm{M}:\underline{\mathrm{CH}}^{d-1}(\mathrm{M}) \longrightarrow \mathbb{Q}, \qquad x_\mathscr{F}\coloneq \prod_{F \in \mathscr{F}} x_{F}\longmapsto 1,
\]
where $\mathscr{F}$ is any complete flag of nonempty proper flats of $\mathrm{M}$ (Definition \ref{DefinitionDegreemap}).
For any integer $k$, the degree map defines the \emph{Poincar\'e pairing}
\[
\underline{\mathrm{CH}}^k(\mathrm{M})  \times \underline{\mathrm{CH}}^{d-k-1}(\mathrm{M})  \longrightarrow \mathbb{Q}, \quad (\eta_1,\eta_2) \longmapsto \underline{\deg}_\mathrm{M}(\eta_1 \eta_2).
\]
If $\mathrm{M}$ is realizable over a field,\footnote{
We say that $\mathrm{M}$ is \emph{realizable} over a field $\mathbb{F}$ if there 
exists a linear subspace $V\subseteq \mathbb{F}^E$ such that $S\subseteq E$ is independent
if and only if the projection from $V$ to $\mathbb{F}^S$ is surjective.
Almost all matroids are not realizable over any field \cite{Nelson}.}
then the Chow ring of $\mathrm{M}$ is isomorphic to the Chow ring of a smooth projective variety over the field (Remark \ref{remark:wonderful}).

Let  $i$ be an element of $E$, and let $\mathrm{M} \setminus i$ be the \emph{deletion} of $i$ from $\mathrm{M}$. 
By definition,  $\mathrm{M} \setminus i$ is the matroid on $E \setminus i$ whose flats are the sets of the form $F \setminus i$ for a flat $F$  of $\mathrm{M}$.
%By definition,  $\mathrm{M} \setminus i$  is a matroid on $E \setminus i$ with the lattice of flats
%\[
%\mathscr{L}(\mathrm{M} \setminus i) = \{F \setminus i \mid \text{$F$ is a flat of $\mathrm{M}$}\}.
%\]
The Chow rings of $\mathrm{M}$ and $\mathrm{M} \setminus i$ are related by the graded algebra homomorphism 
\[
\underline{\theta}_i=\underline{\theta}^\mathrm{M}_i: \underline{\mathrm{CH}}(\mathrm{M} \setminus i) \longrightarrow \underline{\mathrm{CH}}(\mathrm{M}), \qquad x_F \longmapsto x_F + x_{F \cup i},
\]
where a variable in the target is set to zero if its label is not a flat of $\mathrm{M}$. %(Proposition \ref{DeletionInjection}).
Let $\underline{\mathrm{CH}}_{(i)}$ be the image of the homomorphism $\underline{\theta}_i$, and let
%\[
%\underline{\mathrm{CH}}_{(i)}  \coloneq \theta_i \hspace{0.3mm}\underline{\mathrm{CH}}(\mathrm{M \setminus i}).
%\]
$\underline{\mathscr{S}}_i$ be the collection 
\[
\underline{\mathscr{S}}_i=\underline{\mathscr{S}}_i(\mathrm{M})=\big\{F \mid \text{$F$ is a nonempty proper subset of $E \setminus i$ such that $F \in \mathscr{L}(\mathrm{M})$ and $F\cup i \in \mathscr{L}(\mathrm{M})$}\big\}.
\]
%making $\underline{\mathrm{CH}}(\mathrm{M})$ to a graded $\underline{\mathrm{CH}}(\mathrm{M \setminus i})$-module.
The element $i$ is said to be a \emph{coloop} of $\mathrm{M}$ if the ranks of $\mathrm{M}$ and $\mathrm{M} \setminus i$ are not equal.

\begin{theorem}\label{TheoremUnderlinedDecomposition}
If $i$ is not a coloop of $\mathrm{M}$,   there is  a direct sum decomposition of $\underline{\mathrm{CH}}(\mathrm{M}) $ into indecomposable graded $\underline{\mathrm{CH}}(\mathrm{M}\setminus i)$-modules
\begin{equation} \label{eqn:underlined deletion decomposition}
\underline{\mathrm{CH}}(\mathrm{M}) = \underline{\mathrm{CH}}_{(i)} \oplus \bigoplus_{F \in \underline{\mathscr{S}}_i} x_{F\cup i} \underline{\mathrm{CH}}_{(i)}. \tag{$\underline{\mathrm{D}}_1$}
\end{equation}
All pairs of distinct summands are orthogonal for the Poincar\'e pairing of $\underline{\mathrm{CH}}(\mathrm{M})$.
If $i$ is a coloop of $\mathrm{M}$,  there is a direct sum decomposition of $\underline{\mathrm{CH}}(\mathrm{M}) $ into indecomposable graded  $\underline{\mathrm{CH}}(\mathrm{M}\setminus i)$-modules\footnote{When $E = \{i\}$, we treat the symbol $x_\varnothing$ 
as zero in the right-hand side of  \eqref{eqn:underlined deletion decomposition coloop}.}
\begin{equation} \label{eqn:underlined deletion decomposition coloop}
\underline{\mathrm{CH}}(\mathrm{M}) = \underline{\mathrm{CH}}_{(i)} \oplus x_{E\setminus i}\underline{\mathrm{CH}}_{(i)} \oplus \bigoplus_{F\in \underline{\mathscr{S}}_i} x_{F\cup i} \underline{\mathrm{CH}}_{(i)}. \tag{$\underline{\mathrm{D}}_2$}
\end{equation}
All pairs of distinct summands except for the first two are orthogonal for the Poincar\'e pairing of $\underline{\mathrm{CH}}(\mathrm{M})$.
\end{theorem}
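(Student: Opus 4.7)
My plan is to establish the three assertions of the theorem in sequence: that the right-hand side of \eqref{eqn:underlined deletion decomposition} or \eqref{eqn:underlined deletion decomposition coloop} spans $\underline{\mathrm{CH}}(\mathrm{M})$ and the sum is direct, that distinct summands are Poincar\'e-orthogonal, and that each summand is indecomposable as a graded $\underline{\mathrm{CH}}(\mathrm{M}\setminus i)$-module. The identity driving the spanning step is
\[
x_G \;=\; \underline{\theta}_i(x_G) \;-\; x_{G \cup i},
\]
valid for any nonempty proper flat $G$ of $\mathrm{M}\setminus i$ with $i \notin G$, where $x_{G \cup i}$ is read as $0$ whenever $G \cup i$ is not a (proper nonempty) flat of $\mathrm{M}$.

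For spanning, I would start from the standard fact that $\underline{\mathrm{CH}}(\mathrm{M})$ is $\mathbb{Q}$-spanned by monomials $x_\mathscr{F}$ over flags $\mathscr{F}$ of nonempty proper flats of $\mathrm{M}$. Given such a flag, let $F^-$ be its largest flat not containing $i$, if any. Applying the identity at $F^-$, every $F_l \subsetneq F^-$ in $\mathscr{F}$ is incomparable with $F^-\cup i$, so the corresponding cross-term vanishes by $\underline{J}_\mathrm{M}$, while every $F_l \supsetneq F^-$ already contains $i$ and lies above $F^- \cup i$. Iterating on the truncated flag and using that $\underline{\theta}_i$ is a ring map, I rewrite $x_\mathscr{F}$ as a sum of a term in $\underline{\mathrm{CH}}_{(i)}$ and a term in $x_{F^-\cup i}\underline{\mathrm{CH}}_{(i)}$. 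The conclusion that $F^- \cup i \in \mathscr{L}(\mathrm{M})$ (in the nonvanishing case) forces $F^- \in \underline{\mathscr{S}}_i$, except when $F^- = E \setminus i$; this latter possibility occurs only when $i$ is a coloop (so that $E \setminus i$ is a flat of $\mathrm{M}$) and accounts for the extra summand $x_{E \setminus i}\underline{\mathrm{CH}}_{(i)}$ in \eqref{eqn:underlined deletion decomposition coloop}, since $E \setminus i$ is not a proper flat of $\mathrm{M}\setminus i$ and the reduction identity does not apply to it.

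For orthogonality, two exceptional summands indexed by incomparable $F, F' \in \underline{\mathscr{S}}_i$ are orthogonal outright since $x_{F \cup i}x_{F' \cup i} = 0$ in $\underline{\mathrm{CH}}(\mathrm{M})$. The essential case is $F \subsetneq F'$ in $\underline{\mathscr{S}}_i$, together with the analogous orthogonality of $\underline{\mathrm{CH}}_{(i)}$ against each $x_{F \cup i}\underline{\mathrm{CH}}_{(i)}$. For these, I would construct a Gysin-style pushforward $\underline{\psi}_i \co \underline{\mathrm{CH}}(\mathrm{M}) \to \underline{\mathrm{CH}}(\mathrm{M}\setminus i)$ satisfying the projection formula $\underline{\psi}_i(\underline{\theta}_i(\alpha)\beta) = \alpha \, \underline{\psi}_i(\beta)$ and intertwining the degree maps via $\underline{\deg}_{\mathrm{M}\setminus i}\circ\underline{\psi}_i = \underline{\deg}_\mathrm{M}$. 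Orthogonality then reduces to the vanishings $\underline{\psi}_i(x_{F \cup i}) = 0$ and $\underline{\psi}_i(x_{F \cup i}x_{F' \cup i}) = 0$ for $F \subsetneq F'$ in $\underline{\mathscr{S}}_i$, which I would verify by combining the Feichtner--Yuzvinsky description of $x_{F\cup i}\underline{\mathrm{CH}}(\mathrm{M})$ in terms of the localization $\mathrm{M}|_{F\cup i}$ and contraction $\mathrm{M}/(F\cup i)$ with an explicit combinatorial formula for $\underline{\psi}_i$ on monomials supported on $i$-containing flats.

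Once orthogonality is in hand, any putative relation $\eta_0 + \sum_F x_{F \cup i}\eta_F = 0$ with $\eta_0, \eta_F \in \underline{\mathrm{CH}}_{(i)}$ can be tested by pairing against elements of each summand; combined with the nondegeneracy of the Poincar\'e pairing on $\underline{\mathrm{CH}}(\mathrm{M})$---provable alongside the decomposition by induction on $\rk \mathrm{M}$---this forces the $\eta$s to vanish individually, establishing directness. Each summand is cyclic as a $\underline{\mathrm{CH}}(\mathrm{M}\setminus i)$-module, hence isomorphic to $\underline{\mathrm{CH}}(\mathrm{M}\setminus i)/\Ann(v)$ for its generator $v$; this quotient is a finite-dimensional connected graded commutative algebra with degree-zero part $\mathbb{Q}$ and therefore contains no nontrivial idempotents, giving indecomposability. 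The principal technical obstacle is the construction of $\underline{\psi}_i$ purely from the Feichtner--Yuzvinsky presentation and the computation of $\underline{\psi}_i(x_{F \cup i})$ and $\underline{\psi}_i(x_{F \cup i}x_{F' \cup i})$; this is the combinatorial shadow of the BBD decomposition theorem applied to the semi-small map of wonderful compactifications in the realizable case, and carrying it out directly in terms of generators and relations---without recourse to geometry---is the heart of the argument.
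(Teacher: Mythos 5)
There are two genuine gaps. First, in your spanning argument the key vanishing claim is false: if $F_l \subsetneq F^-$ then $F_l \subsetneq F^- \subseteq F^-\cup i$, so $F_l$ and $F^-\cup i$ are \emph{comparable} and the cross-term $x_{F_l}\,x_{F^-\cup i}$ does not die in $\underline{J}_\mathrm{M}$. (The incomparability you want goes the other way: for $F_1\subsetneq F_2$ both in $\underline{\mathscr{S}}_i$, it is $F_1\cup i$ and $F_2$ that are incomparable, which is what the paper's Lemma \ref{lem:multiplying summands} exploits.) Once those cross-terms survive, iterating your rewriting produces products of several factors $x_{G\cup i}$ and, when some $F_l$ equals $F^-\cup i$, squares $x_{F\cup i}^2$; showing these land back in the claimed summands is precisely the hard part. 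The paper sidesteps the flag-by-flag bookkeeping entirely: it verifies the decomposition in degrees $0$ and $1$ directly, reduces everything to $\underline{\mathrm{CH}}^1_{(i)}\cdot\underline{\mathrm{CH}}^k(\mathrm{M})=\underline{\mathrm{CH}}^{k+1}(\mathrm{M})$, and handles the crucial term $x_{F\cup i}^2$ by a relation coming from $x_Fy_i=0$ in the augmented ring (respectively, the surjectivity of $\underline{\varphi}^{\varnothing}_\mathrm{M}$), together with Lemma \ref{lem:multiplying summands}.

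Second, your orthogonality argument rests on a pushforward $\underline{\psi}_i\colon\underline{\CH}(\mathrm{M})\to\underline{\CH}(\mathrm{M}\setminus i)$ whose construction and key vanishings you defer to ``the principal technical obstacle'' without carrying them out --- but this is where the proof lives, and it is not clear the route is available: $\underline{\pi}_i$ is not a proper map of the (non-complete) toric varieties of the Bergman fans, so there is no off-the-shelf toric pushforward, and defining $\underline{\psi}_i$ as the Poincar\'e adjoint of $\underline{\theta}_i$ presupposes Poincar\'e duality for $\mathrm{M}$, which is being proved simultaneously. The paper's orthogonality argument needs no pushforward at all: Lemma \ref{lem:multiplying summands} shows the product of two summands lands inside one of them, and that summand vanishes in top degree because $i$ is a coloop of $\mathrm{M}^{F\cup i}$ (Proposition \ref{lem:top degree vanishing} and Remark \ref{rem:top degree vanishing}). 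Relatedly, your derivation of directness invokes non-degeneracy of the pairing on $\underline{\CH}(\mathrm{M})$ itself, which is circular (it is a consequence of the decomposition); what is actually needed, and what the paper proves, is non-degeneracy of the restriction of the pairing to each summand, obtained from Poincar\'e duality for the strictly smaller matroids $\mathrm{M}\setminus i$, $\mathrm{M}_{F\cup i}$, $\mathrm{M}^F$ via Lemma \ref{lemma_PoincareCompatible} and Proposition \ref{lemma_bothdegree} --- and the induction must be on $|E|$, not on rank, since $\mathrm{M}\setminus i$ has the same rank as $\mathrm{M}$ when $i$ is not a coloop. Your treatment of indecomposability and of the origin of the extra summand $x_{E\setminus i}\underline{\CH}_{(i)}$ in the coloop case is fine, though you do not address how directness of the two non-orthogonal summands is obtained there.
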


%In Proposition \ref{lem:top degree vanishing}, we give an isomorphism of   graded  $\underline{\mathrm{CH}}(\mathrm{M}\setminus i)$-modules
We write $\text{rk}_\mathrm{M}:2^E \to \mathbb{N}$ for the rank function of $\mathrm{M}$.
For any proper flat $F$ of $\mathrm{M}$, we set\footnote{The symbols $\mathrm{M}^F$ and $\mathrm{M}_F$
appear inconsistently in the literature, sometimes this way and sometimes interchanged.
The localization is frequently called the restriction.  On the other hand, the contraction is also sometimes called the restriction, especially in the context of hyperplane arrangements,
so we avoid the word restriction to minimize ambiguity.}
\begin{align*}
\mathrm{M}^F&\coloneq\text{the localization
 of $\mathrm{M}$ at $F$, a loopless matroid on $F$ of rank equal to $\text{rk}_\mathrm{M}(F)$},\\
\mathrm{M}_F&\coloneq\text{the contraction of $\mathrm{M}$ by $F$, a loopless matroid on $E \setminus F$  of rank equal to $d-\text{rk}_\mathrm{M}(F)$}.
\end{align*}
The lattice of flats of  $\mathrm{M}^F$ can be identified with the lattice of flats of $\mathrm{M}$ that are contained in $F$, 
and the lattice of flats of  $\mathrm{M}_F$ can be identified with the lattice of flats of $\mathrm{M}$ that contain $F$.
%$G \setminus F$, where $G$ is a flat of $\mathrm{M}$ containing $F$.
%The flats of the localization and the contraction are given by
%\begin{align*}
%\mathscr{L}(\mathrm{M}^F)&=\big\{ G \mid \text{$G$ is a flat of $\mathrm{M}$ contained in $F$}\big\},\\
%\mathscr{L}(\mathrm{M}_F)&=\big\{ G\setminus F \mid \text{$G$ is a flat of $\mathrm{M}$ containing $F$}\big\}
%\end{align*}
%\begin{enumerate}[]\itemsep 5pt
%\item $\underline{\mathrm{CH}}_{(i)} \cong \underline{\CH}(\mathrm{M} \setminus i)$,
%\item $x_{E \setminus i}\underline{\mathrm{CH}}_{(i)} \cong \underline{\CH}(\mathrm{M} \setminus i)[-1]$, if $i$ is a coloop
%\item $ x_{F\cup i}\underline{\mathrm{CH}}_{(i)} \cong \underline{\mathrm{CH}}(\mathrm{M}_{F\cup i}) \otimes  \underline{\mathrm{CH}}(\mathrm{M}^{F})[-1]$, if $F$
%\end{enumerate}
The  $\underline{\CH}(\mathrm{M} \setminus i)$-module summands in the decompositions  (\ref{eqn:underlined deletion decomposition})  and  (\ref{eqn:underlined deletion decomposition coloop}) admit isomorphisms 
\[
\underline{\mathrm{CH}}_{(i)} \cong \underline{\CH}(\mathrm{M} \setminus i)  \ \  \text{and} \ \ 
 x_{F\cup i}\underline{\mathrm{CH}}_{(i)} \cong \underline{\mathrm{CH}}(\mathrm{M}_{F\cup i}) \otimes  \underline{\mathrm{CH}}(\mathrm{M}^{F})[-1], %\quad F \in \underline{\mathscr{S}}_i.
\]
where $[-1]$ indicates a degree shift 
(Propositions \ref{DeletionInjection} and \ref{lem:top degree vanishing}).
In addition, if $i$ is a coloop of $\mathrm{M}$, 
\[
x_{E \setminus i}\underline{\mathrm{CH}}_{(i)} \cong \underline{\CH}(\mathrm{M} \setminus i)[-1].
\]
Numerically, the semi-smallness of the decomposition (\ref{eqn:underlined deletion decomposition})  %and  (\ref{eqn:underlined deletion decomposition coloop}) 
is reflected in the identity
\[
\dim  x_{F\cup i}\underline{\mathrm{CH}}^{k-1}_{(i)}  = \dim  x_{F\cup i}\underline{\mathrm{CH}}_{(i)}^{d-k-2} \ \ \text{for $F \in \underline{\mathscr{S}}_i$}.
\]
%where $\mathrm{M}_{F}$ is the contraction of $\mathrm{M}$ by $F$  and $\mathrm{M}^F$ is the localization of $\mathrm{M}$ at $F$.
%Thus the decompositions  can be used to understand the Chow ring of $\mathrm{M}$ in terms of matroids smaller than $\mathrm{M}$.
When $\mathrm{M}$ is the Boolean matroid on $E$, 
the graded dimension of $\underline{\CH}(\mathrm{M})$
is given by the Eulerian numbers $\eulerian{d}{k}$,
and the decomposition (\ref{eqn:underlined deletion decomposition coloop}) specializes to the known quadratic recurrence relation
\[
s_d(t)=s_{d-1}(t)+ t \sum_{k=0}^{d-2} {d-1 \choose k} s_k(t) s_{d-k-1}(t), \qquad s_0(t)=1,
\]
where $s_k(t)$ is the $k$-th Eulerian polynomial \cite[Theorem 1.5]{Petersen}.

\subsection{} 
We also give  similar decompositions  for the augmented Chow ring of $\mathrm{M}$,
which we now introduce.
%The variables $x_F$  in  $\underline{S}_\mathrm{M}$  correspond to the rays $\underline{\rho}_F$ in $\underline{\Pi}_\mathrm{M}$.
Let  $S_\mathrm{M}$ be the ring of polynomials in two sets of variables
\[
S_\mathrm{M}\coloneq \mathbb{Q}[y_i \mid \text{$i$ is an element of $E$}] \;\otimes\; \mathbb{Q}[x_F \mid \text{$F$ is a proper flat of $\mathrm{M}$}].
\]
The \emph{augmented Chow ring} of $\mathrm{M}$ is the quotient algebra
\[
\mathrm{CH}(\mathrm{M}) \coloneq S_\mathrm{M}/ (I_\mathrm{M}+J_\mathrm{M}),
\]
where $I_\mathrm{M}$ is the ideal generated by the linear forms
 \[
y_i - \sum_{i \notin F} x_F,  \ \  \text{for every element  $i$ of $E$},
\]
and $J_\mathrm{M}$ is the ideal generated by  the quadratic monomials
\begin{align*}
x_{F_1}x_{F_2}, \ \  &\text{for every pair of incomparable proper flats $F_1$ and $F_2$ of $\mathrm{M}$, and}\\
y_i \hspace{0.5mm} x_F,  \ \  &\text{for every element $i$ of $E$ and every proper flat  $F$ of $\mathrm{M}$ not containing $i$.}
\end{align*}
%\]
%and  $K_\mathrm{M}$ is the ideal generated by the quadratic monomials
%\[
The augmented Chow ring of $\mathrm{M}$ admits a \emph{degree map}  % the unique linear isomorphism
\[
\deg_\mathrm{M}:\mathrm{CH}^{d}(\mathrm{M}) \longrightarrow \mathbb{Q}, \qquad x_\mathscr{F}\coloneq \prod_{F \in \mathscr{F}} x_{F}\longmapsto 1,
\]
where $\mathscr{F}$ is any complete flag of proper flats of $\mathrm{M}$ (Definition \ref{DefinitionDegreemap}).
For any integer $k$, the degree map defines the \emph{Poincar\'e pairing}
\[
\mathrm{CH}^k(\mathrm{M})  \times \mathrm{CH}^{d-k}(\mathrm{M})  \longrightarrow \mathbb{Q}, \quad (\eta_1,\eta_2) \longmapsto \deg_\mathrm{M}(\eta_1 \eta_2).
\]
If $\mathrm{M}$ is realizable over a field,
then the augmented Chow ring of $\mathrm{M}$ is isomorphic to the Chow ring of a smooth projective variety over the field (Remark \ref{remark:wonderful}).
The augmented Chow ring  contains the \emph{graded M\"obius algebra} $\mathrm{H}(\mathrm{M})$ (Proposition \ref{PropositionMobiusAlgebra}),
and it is related to the Chow ring of $\mathrm{M}$ by the isomorphism
\begin{equation*}\label{tensorMobius}
\underline{\mathrm{CH}}(\mathrm{M}) \cong \mathrm{CH}(\mathrm{M}) \otimes_{\mathrm{H}(\mathrm{M})} \mathbb{Q}.
\end{equation*}
The $\mathrm{H}(\mathrm{M})$-module structure of $\mathrm{CH}(\mathrm{M})$ will be studied in detail in the forthcoming paper \cite{BHMPW}.

%By definition,  $\mathrm{M} \setminus i$  is a matroid on $E \setminus i$ with the lattice of flats
%\[
%\mathscr{L}(\mathrm{M} \setminus i) = \{F \setminus i \mid \text{$F$ is a flat of $\mathrm{M}$}\}.
%\]
As before, we write $\mathrm{M} \setminus i$ for the matroid obtained from $\mathrm{M}$ by deleting the element $i$.
The augmented Chow rings of $\mathrm{M}$ and $\mathrm{M} \setminus i$ are related by the  graded algebra homomorphism 
\[
\theta_i=\theta^\mathrm{M}_i: \mathrm{CH}(\mathrm{M} \setminus i) \longrightarrow \mathrm{CH}(\mathrm{M}), \qquad x_F \longmapsto x_F + x_{F \cup i},
\]
where a variable in the target is set to zero if its label is not a flat of $\mathrm{M}$. %(Proposition \ref{DeletionInjection}).
Let $\mathrm{CH}_{(i)}$ be the image of the homomorphism $\theta_i$, and let
%\[
%\underline{\mathrm{CH}}_{(i)}  \coloneq \theta_i \hspace{0.3mm}\underline{\mathrm{CH}}(\mathrm{M \setminus i}).
%\]
$\mathscr{S}_i$ be the collection 
\[
\mathscr{S}_i=\mathscr{S}_i(\mathrm{M})\coloneq\big\{F \mid \text{$F$ is a proper subset of $E \setminus i$ such that $F \in \mathscr{L}(\mathrm{M})$ and $F\cup i \in \mathscr{L}(\mathrm{M})$}\big\}.
\]

\begin{theorem}\label{TheoremDecomposition}
If $i$ is not a coloop of $\mathrm{M}$, there is  a direct sum decomposition of $\mathrm{CH}(\mathrm{M}) $ into indecomposable graded $\mathrm{CH}(\mathrm{M}\setminus i)$-modules
\begin{equation}\label{eqn:deletion decomposition}
\mathrm{CH}(\mathrm{M}) = \mathrm{CH}_{(i)} \oplus \bigoplus_{F \in \mathscr{S}_i} x_{F\cup i} \mathrm{CH}_{(i)}. \tag{$\mathrm{D}_1$}
\end{equation}
All pairs of distinct summands are orthogonal for the Poincar\'e pairing of $\mathrm{CH}(\mathrm{M})$.
If $i$ is a coloop of $\mathrm{M}$,  there is a direct sum decomposition of $\mathrm{CH}(\mathrm{M}) $ into indecomposable graded  $\mathrm{CH}(\mathrm{M}\setminus i)$-modules
\begin{equation}\label{eqn:deletion decomp coloop}
\mathrm{CH}(\mathrm{M}) = \mathrm{CH}_{(i)} \oplus x_{E\setminus i}\mathrm{CH}_{(i)} \oplus \bigoplus_{F\in \mathscr{S}_i} x_{F\cup i} \mathrm{CH}_{(i)}. \tag{$\mathrm{D}_2$}
\end{equation}
All pairs of distinct summands except for the first two are orthogonal for the Poincar\'e pairing of $\mathrm{CH}(\mathrm{M})$.
\end{theorem}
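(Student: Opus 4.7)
The plan is to follow the same three-part template used to prove Theorem~\ref{TheoremUnderlinedDecomposition}, adapted for the augmented setting. The argument breaks into spanning, identification of each summand, and orthogonality.

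\textbf{Spanning.} I first eliminate the $y_j$-generators via the relations $y_j = \sum_{j \notin F} x_F$, reducing to $x$-monomials. For any such monomial, repeated application of
\[
x_G = \theta_i(x_{G \setminus i}) - x_{G \setminus i} \qquad (i \in G),
\]
with $x_{G \setminus i}$ interpreted as $0$ when $G \setminus i \not\in \mathscr{L}(\mathrm{M})$, shows that every element of $\mathrm{CH}(\mathrm{M})$ lies in $\mathrm{CH}_{(i)} + \sum_{F \in \mathscr{S}_i} x_{F \cup i}\,\mathrm{CH}_{(i)}$. The additional summand $x_{E \setminus i}\,\mathrm{CH}_{(i)}$ is required exactly when $i$ is a coloop, because then $E \setminus i$ is a proper flat of $\mathrm{M}$ that is not a proper flat of $\mathrm{M} \setminus i$, so the generator $x_{E \setminus i}$ of $\mathrm{CH}(\mathrm{M})$ has no preimage under $\theta_i$ and must be tracked separately.

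\textbf{Identification of summands.} Next I would show that $\theta_i$ is injective by constructing a left inverse sending $x_F \mapsto x_F$ for $i \notin F \in \mathscr{L}(\mathrm{M})$ and $x_F \mapsto 0$ for $i \in F$, giving $\mathrm{CH}_{(i)} \cong \mathrm{CH}(\mathrm{M} \setminus i)$. For each $F \in \mathscr{S}_i$, I would establish an isomorphism of graded $\mathrm{CH}(\mathrm{M} \setminus i)$-modules
\[
x_{F \cup i}\,\mathrm{CH}_{(i)} \;\cong\; \underline{\mathrm{CH}}(\mathrm{M}_{F \cup i}) \otimes \mathrm{CH}(\mathrm{M}^F)[-1],
\]
sending a pure tensor to the corresponding product in $\mathrm{CH}(\mathrm{M})$. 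The quadratic relations force $x_{F \cup i} \cdot \theta_i(x_G) = 0$ unless $G$ is comparable to $F$, partitioning the nonzero contributions into an ``above $F \cup i$'' piece (governed by the unaugmented $\mathrm{M}_{F \cup i}$) and a ``below $F$'' piece (governed by the augmented $\mathrm{M}^F$, whose proper flats naturally include $\varnothing$). Indecomposability then follows because both tensor factors are Poincar\'e duality algebras. The coloop summand $x_{E \setminus i}\,\mathrm{CH}_{(i)} \cong \mathrm{CH}(\mathrm{M} \setminus i)[-1]$ is identified analogously.

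\textbf{Directness and orthogonality.} Directness follows from the identification of summands together with a Hilbert series computation matching the sum of their graded dimensions to $\dim \mathrm{CH}(\mathrm{M})$, which one can verify via a deletion-contraction recursion. For orthogonality, the product of elements in $x_{F_1 \cup i}\,\mathrm{CH}_{(i)}$ and $x_{F_2 \cup i}\,\mathrm{CH}_{(i)}$ with $F_1 \neq F_2$ in $\mathscr{S}_i$ vanishes when $F_1 \cup i$ and $F_2 \cup i$ are incomparable. When they are comparable, the numerical semi-small identity $\dim x_{F \cup i}\,\mathrm{CH}^{k-1}_{(i)} = \dim x_{F \cup i}\,\mathrm{CH}^{d-k-1}_{(i)}$ forces nonzero products to lie strictly below the top degree $d$, killing the pairing.

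\textbf{Main obstacle.} I expect the identification step to be the most delicate, especially verifying that the natural map from $\underline{\mathrm{CH}}(\mathrm{M}_{F \cup i}) \otimes \mathrm{CH}(\mathrm{M}^F)[-1]$ is well-defined: one must check that all defining linear and quadratic relations of both tensor factors map to zero after multiplication by $x_{F \cup i}$ in $\mathrm{CH}(\mathrm{M})$. The augmented linear relations $y_j = \sum_{j \notin H} x_H$ on the $\mathrm{M}^F$-factor (for $j \in F$) must be matched against the corresponding relations of $\mathrm{CH}(\mathrm{M})$ after multiplying by $x_{F \cup i}$, which requires careful bookkeeping about which flats of $\mathrm{M}$ comparable to $F$ contain a given element of $F$. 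Bijectivity of the map then follows from the spanning argument together with a graded dimension count.
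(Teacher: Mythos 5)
There are genuine gaps, and the most serious one is in your spanning step. Writing each generator as $x_G = \theta_i(x_{G\setminus i}) - x_{G\setminus i}$ (or $x_G=\theta_i(x_G)-x_{G\cup i}$) and "repeatedly applying" this only rewrites a monomial as a sum of products of elements of $\mathrm{CH}_{(i)}$ with products of the classes $x_{F\cup i}$; it does not show that such products lie in $\mathrm{CH}_{(i)} + \sum_F x_{F\cup i}\mathrm{CH}_{(i)}$. The case of incomparable or distinct comparable flats is easy (the paper's Lemma \ref{lem:multiplying summands}), but the square $x_{F\cup i}^2$ (and $x_{E\setminus i}^2$ in the coloop case) is exactly where the work is: the paper needs the explicit manipulation of the relation $0=x_Fy_i$ to show $x_{F\cup i}^2\in \mathrm{CH}^1_{(i)}\cdot\mathrm{CH}^1(\mathrm{M})$, and then uses generation of $\CH(\mathrm{M})$ in degree $1$ to propagate this to all degrees. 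Your proposal never confronts this term. A second gap is your directness argument: you appeal to "a Hilbert series computation matching the sum of their graded dimensions to $\dim\CH(\mathrm{M})$, verified by a deletion--contraction recursion," but no independent formula for $\dim\CH^k(\mathrm{M})$ is available at this point (establishing one, e.g.\ via a monomial basis for the augmented Chow ring, would be a substantial separate result). The paper instead deduces directness from orthogonality together with the non-degeneracy of the Poincar\'e pairing restricted to each summand, and this is why the theorem must be proved \emph{simultaneously} with Poincar\'e duality for $\CH(\mathrm{M})$ and $\uCH(\mathrm{M})$ by induction on $|E|$ --- a structural feature entirely absent from your outline, even though your identification of the summands (injectivity of $\theta_i$ and of $\psi^{F\cup i}_{\mathrm{M}}$) silently depends on Poincar\'e duality for the smaller matroids.

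A smaller but real error: the proposed left inverse of $\theta_i$ (sending $x_F\mapsto x_F$ for $i\notin F$ and $x_F\mapsto 0$ for $i\in F$) is not well defined, because the linear relations $y_j-\sum_{j\notin F}x_F$ of $\CH(\mathrm{M})$ do not map to relations of $\CH(\mathrm{M}\setminus i)$ when $\mathrm{M}\setminus i$ has flats $F'$ with $F'\cup i\in\mathscr{L}(\mathrm{M})$ but $F'\notin\mathscr{L}(\mathrm{M})$; moreover for such $F'$ one has $\theta_i(x_{F'})=x_{F'\cup i}\mapsto 0\neq x_{F'}$, so it is not a left inverse. Injectivity of $\theta_i$ should instead be obtained, as in Proposition \ref{DeletionInjection}, from the compatibility of $\theta_i$ with the degree maps (Lemma \ref{lemma_PoincareCompatible}) and the inductively known Poincar\'e duality for $\CH(\mathrm{M}\setminus i)$.
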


The  $\CH(\mathrm{M} \setminus i)$-module summands in the decompositions  (\ref{eqn:deletion decomposition})  and  (\ref{eqn:deletion decomp coloop}) admit isomorphisms 
\[
\mathrm{CH}_{(i)} \cong \CH(\mathrm{M} \setminus i) \ \  \text{and} \ \ 
 x_{F\cup i}\mathrm{CH}_{(i)} \cong \underline{\mathrm{CH}}(\mathrm{M}_{F\cup i}) \otimes  \mathrm{CH}(\mathrm{M}^{F})[-1], %\quad F \in \underline{\mathscr{S}}_i.
\]
where $[-1]$ indicates a degree shift (Propositions \ref{DeletionInjection} and \ref{lem:top degree vanishing}).
In addition, if $i$ is a coloop of $\mathrm{M}$, 
\[
x_{E \setminus i}\mathrm{CH}_{(i)} \cong \CH(\mathrm{M} \setminus i)[-1].
\]
%All the summands are cyclic, and therefore indecomposable in the category of graded  $\CH(\mathrm{M} \setminus i)$-modules.
%This implies that the summands are indecomposable in the category of $\CH(\mathrm{M} \setminus i)$-modules. %\footnote{By \cite[Corollary 2]{CF} or \cite[Theorem 3.2]{GG},
%the indecomposability of the summands in the category of graded $\underline{\CH}(\mathrm{M} \setminus i)$-modules implies the indecomposability of the summands in the category of  $\underline{\CH}(\mathrm{M} \setminus i)$-modules.}
Numerically, the semi-smallness of the decomposition (\ref{eqn:deletion decomposition})  %and  (\ref{eqn:deletion decomp coloop}) 
is reflected in the identity
\[
\dim  x_{F\cup i}\mathrm{CH}^{k-1}_{(i)}  = \dim  x_{F\cup i}\mathrm{CH}_{(i)}^{d-k-1}\ \ \text{for $F \in \mathscr{S}_i$}.
\]
%so the decompositions can be used to recursively compute the graded dimension of the augmented Chow ring of $\mathrm{M}$ (Section \ref{Section3}).

%A piecewise linear function is said to be \emph{rational} if it takes rational values at all rational points.
%As in \cite[Section 4]{AHK}, we may identify  $\underline{\mathrm{CH}}^1(\mathrm{M})$ with the space of rational piecewise linear functions on $\underline{\Pi}_\mathrm{M}$ modulo the rational linear functions on $\mathbb{R}^E/\langle \mathbf{e}_E\rangle$.
%Similarly, we may identify $\mathrm{CH}^1(\mathrm{M})$ with the space of rational piecewise linear functions on $\Pi_\mathrm{M}$ modulo the rational linear functions on $\mathbb{R}^E$.
%Unless otherwise stated, all piecewise linear functions in this paper will assumed to be rational.
\subsection{} 
Let $\mathrm{B}$ be the Boolean matroid on $E$.
By definition, every subset of $E$ is a flat of $\mathrm{B}$.
The Chow rings of $\mathrm{B}$ and $\mathrm{M}$ are related by
the surjective graded algebra homomorphism
\[
\underline{\mathrm{CH}}(\mathrm{B}) \longrightarrow \underline{\mathrm{CH}}(\mathrm{M}), \qquad x_S \longmapsto x_S,
\]
where a variable in the target is set to zero if its label is not a flat of $\mathrm{M}$. Similarly,
we have a surjective graded algebra homomorphism
\[
\mathrm{CH}(\mathrm{B}) \longrightarrow \mathrm{CH}(\mathrm{M}), \qquad x_S \longmapsto x_S,
\]
where a variable in the target is set to zero if its label is not a flat of $\mathrm{M}$.
As in \cite[Section 4]{AHK}, we may identify the Chow ring $\underline{\mathrm{CH}}(\mathrm{B})$ with the ring of piecewise polynomial functions modulo linear functions on the normal fan  $\underline{\Pi}_\mathrm{B}$ of the standard permutohedron in $\mathbb{R}^E$.
Similarly, the augmented Chow ring $\mathrm{CH}(\mathrm{B})$ can be identified 
with the ring of piecewise polynomial functions modulo linear functions of the normal fan $\Pi_\mathrm{B}$ of the stellahedron in $\mathbb{R}^E$ (Definition \ref{DefinitionAugmentedBergmanFan}).
A convex piecewise linear function on a complete fan is said to be \emph{strictly convex} if there is a bijection between the cones in the fan and the faces of the graph of the function.

In Section \ref{Section4}, we use Theorems \ref{TheoremUnderlinedDecomposition} and \ref{TheoremDecomposition} to give simple proofs of Poincar\'e duality, the hard Lefschetz theorem, and the Hodge--Riemann relations for  $\underline{\mathrm{CH}}(\mathrm{M})$ and $\mathrm{CH}(\mathrm{M})$.

\begin{theorem}\label{TheoremChowKahlerPackage}
%Let $\mathrm{M}$ be a loopless matroid on $E$.
% \begin{enumerate}[(1)]\itemsep 5pt
%\item  
Let $\underline{\ell}$ be a strictly convex piecewise linear function on $\underline{\Pi}_\mathrm{B}$, viewed as an element of $\underline{\mathrm{CH}}^1(\mathrm{M})$.
\begin{enumerate}[(1)]\itemsep 5pt
\item (Poincar\'e duality theorem) For every nonnegative integer $k < \frac{d}{2}$, the bilinear pairing
\[
\underline{\mathrm{CH}}^k(\mathrm{M})  \times \underline{\mathrm{CH}}^{d-k-1}(\mathrm{M})  \longrightarrow \mathbb{Q}, \quad (\eta_1,\eta_2) \longmapsto \underline{\deg}_\mathrm{M}(\eta_1 \eta_2)
\]
is non-degenerate.
\item (Hard Lefschetz theorem)  For every nonnegative integer $k < \frac{d}{2}$, the multiplication map
\[
\underline{\mathrm{CH}}^k(\mathrm{M})  \longrightarrow  \underline{\mathrm{CH}}^{d-k-1}(\mathrm{M}), \quad \eta \longmapsto \underline{\ell}^{d-2k-1}  \eta
\]
is an isomorphism.
\item (Hodge--Riemann relations) For every nonnegative integer $k <\frac{d}{2}$, the bilinear form
\[
\underline{\mathrm{CH}}^k(\mathrm{M})  \times \underline{\mathrm{CH}}^{k}(\mathrm{M})  \longrightarrow \mathbb{Q}, \quad (\eta_1,\eta_2) \longmapsto (-1)^k \underline{\deg}_\mathrm{M}(\underline{\ell}^{d-2k-1} \eta_1 \eta_2)
\]
is positive definite on the kernel of multiplication by $\underline{\ell}^{d-2k}$.
\end{enumerate}
Let $\ell$ be a strictly convex piecewise linear function on $\Pi_\mathrm{B}$,  viewed as an element of $\mathrm{CH}^1(\mathrm{M})$.
\begin{enumerate}[(1)]\itemsep 5pt
\item[(4)] (Poincar\'e duality theorem) For every nonnegative integer $k \le \frac{d}{2}$, the bilinear pairing
\[
\mathrm{CH}^k(\mathrm{M})  \times \mathrm{CH}^{d-k}(\mathrm{M})  \longrightarrow \mathbb{Q}, \quad (\eta_1,\eta_2) \longmapsto \deg_\mathrm{M}(\eta_1 \eta_2)
\]
is non-degenerate.
\item[(5)] (Hard Lefschetz theorem)  For every nonnegative integer $k \le \frac{d}{2}$, the multiplication map
\[
\mathrm{CH}^k(\mathrm{M})  \longrightarrow  \mathrm{CH}^{d-k}(\mathrm{M}), \quad \eta \longmapsto \ell^{d-2k} \eta
\]
is an isomorphism.
\item[(6)] (Hodge--Riemann relations) For every nonnegative integer $k \le \frac{d}{2}$, the bilinear form
\[
\mathrm{CH}^k(\mathrm{M})  \times \mathrm{CH}^{k}(\mathrm{M})  \longrightarrow \mathbb{Q}, \quad (\eta_1,\eta_2) \longmapsto (-1)^k \deg_\mathrm{M}( \ell^{d-2k} \eta_1 \eta_2)
\]
is positive definite on the kernel of multiplication by $\ell^{d-2k+1}$.
\end{enumerate}
%\end{enumerate}
\end{theorem}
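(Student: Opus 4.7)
The strategy is a simultaneous induction on $|E|$, proving the six statements (1)--(6) together so that the K\"ahler packages for $\underline{\mathrm{CH}}(\mathrm{M})$ and $\mathrm{CH}(\mathrm{M})$ can feed into one another in the inductive step. The base cases, when $E$ is empty or a singleton, can be checked by direct computation. For the inductive step, fix an element $i \in E$ and apply Theorem \ref{TheoremUnderlinedDecomposition} (respectively, Theorem \ref{TheoremDecomposition}). Under the stated isomorphisms
\[
x_{F\cup i}\underline{\mathrm{CH}}_{(i)} \;\cong\; \underline{\mathrm{CH}}(\mathrm{M}_{F\cup i}) \otimes \underline{\mathrm{CH}}(\mathrm{M}^F)[-1], \qquad x_{F\cup i}\mathrm{CH}_{(i)} \;\cong\; \underline{\mathrm{CH}}(\mathrm{M}_{F\cup i}) \otimes \mathrm{CH}(\mathrm{M}^F)[-1],
\]
every summand is a shifted tensor product of Chow rings of matroids with strictly smaller ground set. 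By the inductive hypothesis each factor satisfies (1)--(6), and a tensor product of two graded rings satisfying the K\"ahler package also satisfies the K\"ahler package, with Lefschetz operator of the form $a \otimes 1 + 1 \otimes b$. This is precisely why the augmented and non-augmented statements must be proved in tandem: the augmented decomposition involves both types of ring.

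The key step is then to choose a Lefschetz class which preserves the decomposition. Since every summand is a module over $\underline{\mathrm{CH}}_{(i)} = \operatorname{image}(\underline{\theta}_i)$, multiplication by any element of this subring respects the direct sum. Picking a strictly convex piecewise linear function $\underline{\ell}'$ on $\underline{\Pi}_{\mathrm{B} \setminus i}$ and setting $\underline{\alpha} := \underline{\theta}_i(\underline{\ell}')$, I would verify that, through the tensor product identification, multiplication by $\underline{\alpha}$ on $x_{F \cup i}\underline{\mathrm{CH}}_{(i)}$ corresponds to an operator $\underline{\ell}'_1 \otimes 1 + 1 \otimes \underline{\ell}'_2$, with $\underline{\ell}'_1$ and $\underline{\ell}'_2$ strictly convex on the relevant permutohedral fans for $\mathrm{M}_{F\cup i}$ and $\mathrm{M}^F$. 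Combining this with the pairwise orthogonality of the summands under the Poincar\'e pairing establishes PD, HL, and HR on $\underline{\mathrm{CH}}(\mathrm{M})$ for the boundary class $\underline{\alpha}$. The augmented case proceeds in parallel.

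Finally, to pass from $\underline{\alpha}$ to an arbitrary strictly convex $\underline{\ell}$, I would invoke the standard deformation principle: HL is a Zariski-open condition on $\underline{\mathrm{CH}}^1(\mathrm{M})$, and on the HL locus the signature of the associated HR bilinear form is locally constant. The cone of strictly convex piecewise linear functions on $\underline{\Pi}_\mathrm{B}$ is convex, and $\underline{\alpha}$ lies in its closure; propagating from perturbations of $\underline{\alpha}$ into the interior of the cone, and then throughout it by connectedness, yields HL and HR for every strictly convex $\underline{\ell}$. The coloop case is handled analogously, the new subtlety being that $\underline{\mathrm{CH}}_{(i)} \oplus x_{E \setminus i}\underline{\mathrm{CH}}_{(i)} \cong \underline{\mathrm{CH}}(\mathrm{M}\setminus i) \oplus \underline{\mathrm{CH}}(\mathrm{M}\setminus i)[-1]$ is \emph{not} an orthogonal pair and must be treated as a single block whose explicit off-diagonal pairing can be computed directly. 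The main obstacle will be the compatibility check in the previous paragraph: verifying that multiplication by $\underline{\theta}_i(\underline{\ell}')$ transfers, through the tensor product isomorphism, to a genuine sum of strictly convex classes on the two factors. This is essentially a combinatorial computation with the relevant fans, but once it is in hand the Hodge-theoretic inputs---tensor products of polarized Hodge--Lefschetz structures, openness of HL, and constancy of signature---are entirely standard and produce the theorem cleanly.
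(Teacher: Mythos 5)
Your overall strategy matches the paper's: induction on $|E|$, with parts (2)--(3) for $\mathrm{M}$ feeding into parts (5)--(6) for $\mathrm{M}$; the semi-small decomposition providing an orthogonal splitting into summands isomorphic to tensor products of Chow rings of smaller matroids; the degenerate class $\underline{\theta}_i(\underline{\ell}')$ (the paper's $\ell_i=\ell\circ\underline{\pi}_i$) acting compatibly on each summand; and a signature argument to reach arbitrary strictly convex classes. The compatibility check you flag at the end is carried out in the paper and is unproblematic.

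The genuine gap is in your final deformation step. Knowing that $\underline{\alpha}=\underline{\theta}_i(\underline{\ell}')$ satisfies the Hodge--Riemann relations gives you the hard Lefschetz theorem, hence the Hodge--Riemann relations, on an open neighborhood of $\underline{\alpha}$, and more generally shows that the locus of classes satisfying the Hodge--Riemann relations is open. But openness plus connectedness of the ample cone does not let you ``propagate throughout'': you would also need that locus to be closed inside the cone, and closedness at a limit point $\ell_\infty$ requires knowing hard Lefschetz at $\ell_\infty$ in advance --- the constancy of the signature of a family of nondegenerate forms tells you nothing at a point where you cannot yet rule out degeneration. The paper resolves this by first proving the hard Lefschetz theorem for \emph{every} strictly convex class on $\underline{\Pi}_\mathrm{B}$, as a consequence of the local Hodge--Riemann relations: by induction and the tensor-product result \cite{AHK}*{Proposition 7.7}, the star of every ray of $\underline{\Pi}_\mathrm{M}$ satisfies the Hodge--Riemann relations, and \cite{AHK}*{Proposition 7.15} converts this into hard Lefschetz for the whole fan with respect to any strictly convex class. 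Only after that does the signature argument run: with hard Lefschetz known everywhere on the cone, ``satisfies the Hodge--Riemann relations'' is both open and closed there, and openness further reduces the problem to the merely convex class $\ell_i$, where your decomposition argument applies. You have the ingredients (you already observe that the stars/summands satisfy the K\"ahler package by induction), but the implication ``local Hodge--Riemann for stars $\Rightarrow$ global hard Lefschetz for all ample classes'' must be invoked explicitly; it is not a formal consequence of openness.

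A smaller structural difference: in the paper the coloop case never arises in this part of the induction, since a non-Boolean matroid always has a non-coloop element, and the Boolean case is dispatched by classical Hodge theory for the smooth projective toric variety of $\underline{\Pi}_\mathrm{B}$ (or by McMullen's results for the permutohedron). Your plan to handle the coloop case via the non-orthogonal block $\underline{\mathrm{CH}}_{(i)}\oplus x_{E\setminus i}\underline{\mathrm{CH}}_{(i)}$ is viable --- the paper notes this in a footnote --- but it requires the extra $\mathbb{P}^1$-bundle-type computation you allude to, which would have to be supplied.
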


 Theorem \ref{TheoremChowKahlerPackage} holds non-vacuously, as
  there are strictly convex piecewise linear functions on $\Pi_\mathrm{B}$ and $\underline{\Pi}_\mathrm{B}$ (Proposition \ref{PropositionBoolean}).
The first part of Theorem \ref{TheoremChowKahlerPackage} on $\uCH(\mathrm{M})$ recovers the main result of \cite{AHK}.\footnote{Independent proofs of Poincar\'e duality  for $\underline{\mathrm{CH}}(\mathrm{M})$ were  given in  \cite{BES} and \cite{BDF}. The authors of \cite{BES} also prove the degree $1$ Hodge--Riemann relations for $\underline{\mathrm{CH}}(\mathrm{M})$.}
The second part of Theorem  \ref{TheoremChowKahlerPackage} on $\CH(\mathrm{M})$ is new.

%\begin{definition}
\subsection{} 
In Section \ref{Section5},  we use Theorems \ref{TheoremUnderlinedDecomposition} and \ref{TheoremDecomposition}
to obtain decompositions of $\underline{\CH}(\mathrm{M})$ and $\CH(\mathrm{M})$ related to those appearing in \cite[Theorem 6.18]{AHK}.
Let $\underline{\mathrm{H}}_{\underline{\alpha}}(\mathrm{M})$ be the subalgebra of $\uCH(\mathrm{M})$ generated by the element
\[
\underline{\alpha}_{\mathrm{M}}\coloneq  \sum_{i \in G} x_G \in \underline{\mathrm{CH}}^1(\mathrm{M}),
\]
where the sum is over all nonempty proper flats $G$ of $\mathrm{M}$ containing a given element $i$ in $E$,
and let $\H_\alpha(\mathrm{M})$ be the subalgebra of $\CH(\mathrm{M})$ generated by the element
\[
\alpha_{\mathrm{M}} \coloneq \sum_G x_G \in \mathrm{CH}^1(\mathrm{M}),
\]
where the sum is over all proper flats $G$ of $\mathrm{M}$.
We define graded subspaces $\underline{\mathrm{J}}_{\underline{\alpha}}(\mathrm{M})$ and $\mathrm{J}_\alpha(\mathrm{M})$ by %define a graded  subspace $\overline{\J}(\M)$ of $\uCH(\M)$ by setting
\[
\underline{\mathrm{J}}_{\underline{\alpha}}^k(\mathrm{M})\coloneq \begin{cases} \underline{\mathrm{H}}_{\underline{\alpha}}^k(\mathrm{M}) & \text{if $k \neq d-1$,} \\ \hfil 0 & \text{if $k=d-1$,} \end{cases}
\qquad
\mathrm{J}_\alpha^k(\mathrm{M}) \coloneq \begin{cases} \H_\alpha^k(\mathrm{M}) & \text{if $k \neq d$,} \\ \hfil 0 & \text{if $k=d$.} \end{cases}
\]
A degree computation shows that the elements $\underline{\alpha}_\mathrm{M}^{d-1}$ and 
 $\alpha_\mathrm{M}^d$ are nonzero (Proposition \ref{PropositionAlphaDegree}).
%We give orthogonal decompositions of $\underline{\CH}(\mathrm{M})$ over $\underline{\mathrm{H}}_{\underline{\alpha}}(\mathrm{M})$ and $\CH(\mathrm{M})$ over $\mathrm{H}_\alpha(\mathrm{M})$.
%using  the above module homomorphisms.%$\underline{\psi}^F_\mathrm{M}$ and $\psi^F_\mathrm{M}$.

\begin{theorem}\label{TheoremSimplexDecomposition}
Let $\mathscr{C} = \mathscr{C}(\mathrm{M})$ be the set of all nonempty proper flats of $\mathrm{M}$, and let $\underline{\mathscr{C}} = \underline{\mathscr{C}}(\mathrm{M})$ be the set of all proper flats of $\mathrm{M}$ with rank at least two. 
\begin{enumerate}[(1)]\itemsep 5pt
\item We have a decomposition of $\underline{\mathrm{H}}_{\underline{\alpha}}(\mathrm{M})$-modules
\begin{equation}\label{underlinedalphadecomposition}
\underline{\CH}(\mathrm{M}) = \underline{\mathrm{H}}_{\underline{\alpha}}(\mathrm{M}) \oplus \ \bigoplus_{F \in \underline{\mathscr{C}}}  \ \underline{\psi}^F_\mathrm{M}\ \uCH(\mathrm{M}_F)\otimes \uJ_{\underline{\alpha}}(\mathrm{M}^F). \tag{$\underline{\mathrm{D}}_3$}
\end{equation}
%\qquad
%\mathrm{J}_\vartriangle^k(\mathrm{M}) \coloneq \begin{cases} \H_\vartriangle^k(\mathrm{M}) & \text{if $k \neq d$,} \\ 0 & \text{if $k=d$.} \end{cases}
All pairs of distinct summands are orthogonal for the Poincar\'e pairing of $\underline{\mathrm{CH}}(\mathrm{M})$.
\item We have a decomposition of $\mathrm{H}_\alpha(\mathrm{M})$-modules
\begin{equation}\label{alphadecomposition}
\CH(\mathrm{M}) = \H_\alpha(\mathrm{M}) \oplus \bigoplus_{F \in \mathscr{C}}  \psi^F_\mathrm{M} \ \uCH(\mathrm{M}_F)\otimes {\mathrm{J}}_\alpha(\mathrm{M}^F). \tag{$\mathrm{D}_3$}
\end{equation}
All pairs of distinct summands are orthogonal for the Poincar\'e pairing of $\mathrm{CH}(\mathrm{M})$.
\end{enumerate}
\end{theorem}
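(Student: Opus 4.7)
The plan is to prove both parts of the theorem by induction on the rank $d$ of $\mathrm{M}$, with Theorems \ref{TheoremUnderlinedDecomposition} and \ref{TheoremDecomposition} supplying the main input. I would handle (2) first and then obtain (1) by passing to the quotient $\underline{\mathrm{CH}}(\mathrm{M}) \cong \mathrm{CH}(\mathrm{M}) \otimes_{\mathrm{H}(\mathrm{M})} \mathbb{Q}$: since $\underline{\alpha}_{\mathrm{M}}$ is the image of $\alpha_{\mathrm{M}}$, and since each summand $\psi^F_\mathrm{M}\,\underline{\mathrm{CH}}(\mathrm{M}_F)\otimes \mathrm{J}_\alpha(\mathrm{M}^F)$ attached to a rank-one flat $F$ should collapse in this quotient, the decomposition $(\underline{\mathrm{D}}_3)$ will fall out of $(\mathrm{D}_3)$. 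The base case $d \leq 1$ is immediate from direct inspection of the definitions.

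For the inductive step of (2), I would choose any element $i \in E$ and apply the semi-small decomposition $(\mathrm{D}_1)$ or $(\mathrm{D}_2)$. Using the module isomorphisms stated immediately after Theorem \ref{TheoremDecomposition} --- namely $\mathrm{CH}_{(i)} \cong \mathrm{CH}(\mathrm{M} \setminus i)$, $x_{F\cup i}\mathrm{CH}_{(i)} \cong \underline{\mathrm{CH}}(\mathrm{M}_{F \cup i}) \otimes \mathrm{CH}(\mathrm{M}^{F})[-1]$ for $F \in \mathscr{S}_i$, and $x_{E \setminus i}\mathrm{CH}_{(i)} \cong \mathrm{CH}(\mathrm{M} \setminus i)[-1]$ in the coloop case --- every summand of the semi-small decomposition becomes a tensor product in which a $\mathrm{CH}$-factor is of strictly smaller rank. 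Applying the inductive hypothesis to those factors and then collecting yields a decomposition of $\mathrm{CH}(\mathrm{M})$ whose pieces match $(\mathrm{D}_3)$: summands indexed by flats $F$ not containing $i$ arise from decomposing $\mathrm{CH}(\mathrm{M} \setminus i)$, while summands indexed by flats $F$ containing $i$ arise from decomposing the $\mathrm{CH}(\mathrm{M}^{F\setminus i})$ tensor factor inside each $x_{F}\mathrm{CH}_{(i)}$. The bookkeeping is controlled by the identity $\theta_i(\alpha_{\mathrm{M}\setminus i}) = \alpha_\mathrm{M} - \sum_{i \in G} x_G$, which describes how $\mathrm{H}_\alpha(\mathrm{M})$ sits inside $\mathrm{CH}_{(i)}$.

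Orthogonality of distinct summands in $(\mathrm{D}_3)$ will follow directly from the orthogonality in Theorem \ref{TheoremDecomposition} combined with the inductive orthogonality on each tensor factor, and the vanishing $\mathrm{J}_\alpha^{d}(\mathrm{M}^F) = 0$ is precisely calibrated against the semi-smallness identity $\dim x_{F \cup i}\mathrm{CH}^{k-1}_{(i)} = \dim x_{F \cup i}\mathrm{CH}^{d-k-1}_{(i)}$ so as to prevent over-counting. The main obstacle will be verifying that the iterated decomposition produced from a particular choice of $i$ genuinely coincides with the canonical decomposition $(\mathrm{D}_3)$ --- that is, that the pieces assemble independently of $i$ into summands indexed by each $F \in \mathscr{C}$ exactly once, with $\psi^F_\mathrm{M}$ admitting an intrinsic description (as multiplication by $x_F$ composed with the isomorphisms of Theorem \ref{TheoremDecomposition}). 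Making this explicit is combinatorially delicate but conceptually forced by the structure already set up, and the payoff is that the degree shifts, tensor factors, and indexing sets $\underline{\mathscr{C}}$, $\mathscr{C}$ are exactly what the iteration produces.
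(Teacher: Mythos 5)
Your proposal has the right ingredients in the air --- induction, the semi-small decompositions, the module isomorphisms, the top-degree vanishing of $\mathrm{J}_\alpha(\mathrm{M}^F)$ --- but it is missing the structural idea that makes the paper's proof work, and the obstacle you name at the end is a genuine gap, not a technicality. Iterating $(\mathrm{D}_1)$/$(\mathrm{D}_2)$ and applying the inductive hypothesis to the factors produces a decomposition of $\mathrm{CH}(\mathrm{M})$ into subspaces that depend on the chosen element $i$ and on the order of subsequent deletions; there is no mechanism in your argument forcing these subspaces to coincide with the canonical summands $\psi^F_\mathrm{M}\,\uCH(\mathrm{M}_F)\otimes \mathrm{J}_\alpha(\mathrm{M}^F)$. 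The paper never resolves this identification problem --- it avoids it. The actual proof splits into three independent steps: (i) a direct proof that distinct summands of $(\underline{\mathrm{D}}_3)$ are orthogonal, using the pullback/pushforward formalism (each summand is an $\underline{\mathrm{H}}_{\underline{\alpha}}(\mathrm{M})$-submodule because $\underline{\varphi}^F_\mathrm{M}(\underline{\alpha}_\mathrm{M})=\underline{\alpha}_{\mathrm{M}_F}\otimes 1$, and orthogonality of the summands for $F\subsetneq G$ reduces via a commutative diagram to the statement for the smaller matroid $\mathrm{M}^G$); (ii) a direct proof that the Poincar\'e pairing restricted to each summand is non-degenerate, via the identity $\underline{\deg}_\mathrm{M}\bigl(\underline{\psi}^F(\mu_1\otimes\nu_1)\,\underline{\psi}^F(\mu_2\otimes\nu_2)\bigr)=-\underline{\deg}_{\mathrm{M}_F}(\mu_1\mu_2)\,\underline{\deg}_{\mathrm{M}^F}(\underline{\alpha}_{\mathrm{M}^F}\nu_1\nu_2)$; and (iii) a purely numerical verification that both sides have the same graded dimension, which is where the semi-small decomposition and the induction on $|E|$ actually enter. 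Steps (i) and (ii) force the sum to be direct, and step (iii) forces it to be everything; the question of whether the iterated pieces "assemble" into the canonical ones never arises.

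Two further points. First, your claim that orthogonality "follows directly from the orthogonality in Theorem \ref{TheoremDecomposition}" is not correct: the summands of $(\mathrm{D}_3)$ are indexed by all nonempty proper flats and are not refinements of the summands of $(\mathrm{D}_1)$, which are indexed by $\mathscr{S}_i$, so orthogonality cannot be inherited; it must be proven separately, as in step (i) above. Second, deducing part (1) from part (2) via $\underline{\CH}(\mathrm{M})\cong\CH(\mathrm{M})\otimes_{\H(\mathrm{M})}\mathbb{Q}$ is not justified as stated --- you would need to show that the decomposition $(\mathrm{D}_3)$ is compatible with the $\H(\mathrm{M})$-module structure and that the rank-one summands die in the quotient, neither of which is immediate. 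The paper instead proves (1) in full and observes that (2) follows by the same argument run in the augmented setting.
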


%In  Lemma  \ref{DefinitionUnderlinedPush}, 
Here $\underline{\psi}^F_\mathrm{M}$   is
the injective $\underline{\CH}(\mathrm{M})$-module homomorphism (Propositions \ref{DefinitionUnderlinedPush} and \ref{upsi injective})
\[
\underline{\psi}^F_\mathrm{M}: \underline{\CH}(\mathrm{M}_F) \otimes \underline{\CH}(\mathrm{M}^F)  \longrightarrow \underline{\CH}(\mathrm{M}),
\quad \prod_{F'} x_{F'\setminus F} \otimes \prod_{F''} x_{F''} \longmapsto x_F \prod_{F'} x_{F'} \prod_{F''} x_{F''},
\]
%that maps the monomial $\prod_{F'} x_{F'\setminus F} \otimes \prod_{F''} x_{F''}$ to the monomial $x_F \prod_{F'} x_{F'} \prod_{F''} x_{F''}$,
%and in Lemma  \ref{DefinitionXPushforward}, 
%we describe 
and  $\psi^F_\mathrm{M}$ is the injective $\CH(\mathrm{M})$-module homomorphism (Propositions \ref{DefinitionXPushforward} and \ref{PropositionPushforwardI})
\[
\psi^F_\mathrm{M}: \underline{\CH}(\mathrm{M}_F) \otimes \CH(\mathrm{M}^F)  \longrightarrow \CH(\mathrm{M})
\quad \prod_{F'} x_{F'\setminus F} \otimes \prod_{F''} x_{F''} \longmapsto x_F \prod_{F'} x_{F'} \prod_{F''} x_{F''}.
\]
When $\mathrm{M}$ is the Boolean matroid on $E$, the decomposition (\ref{underlinedalphadecomposition}) specializes to a linear recurrence relation for the Eulerian polynomials
\[
0=1+\sum_{k=0}^d {d \choose k} \frac{t-t^{d-k}}{1-t} s_{k}(t), \qquad s_0(t)=1.
\]
When applied repeatedly, Theorem \ref{TheoremSimplexDecomposition} produces bases of $\underline{\CH}(\mathrm{M})$ and $\CH(\mathrm{M})$ that are permuted by the automorphism group of $\mathrm{M}$.\footnote{Different bases of $\underline{\CH}(\mathrm{M})$ are given in \cite[Corollary 1]{FY} and \cite[Corollary 3.3.3]{BES}.}

\noindent
{\bf Acknowledgments.}
We thank Christopher Eur and Matthew Stevens for useful discussions.

\section{The Chow ring and the augmented Chow ring of a matroid}\label{Section2}

In this section, we collect the various properties of the algebras $\underline{\mathrm{CH}}(\mathrm{M})$ and $\mathrm{CH}(\mathrm{M})$
that we will need in order to prove Theorems 
\ref{TheoremUnderlinedDecomposition}--\ref{TheoremSimplexDecomposition}.
In Section \ref{sec:fans}, we review the definition and basic properties of the \emph{Bergman fan} and introduce the closely related 
\emph{augmented Bergman fan} of a matroid.
Section \ref{sec:stars} is devoted to understanding the stars of the various rays in these two fans, while Section \ref{sec:weights}
is where we compute the space of balanced top-dimensional weights on each fan.
Feichtner and Yuzvinsky showed that the Chow ring of a matroid coincides with the Chow ring of the toric variety
associated with its Bergman fan \cite[Theorem 3]{FY}, and we establish the analogous result for the augmented Chow ring in Section \ref{sec:rings}.
Section \ref{sec:mobius} is where we show that the augmented Chow ring contains the graded M\"obius algebra.
%which plays a crucial role in the proofs of our main results.
In Section \ref{sec:gysin}, we use the results of Section \ref{sec:stars} to construct various homomorphisms that relate
the Chow and augmented Chow rings of different matroids. 

\begin{remark}
It is worth noting why we need to interpret $\underline{\mathrm{CH}}(\mathrm{M})$ and $\mathrm{CH}(\mathrm{M})$
as Chow rings of toric varieties.
First, the study of balanced weights on the Bergman fan and augmented Bergman fan allow us to show 
that $\underline{\mathrm{CH}}^{d-1}(\mathrm{M})$ and $\mathrm{CH}^d(\mathrm{M})$ are nonzero, which is not easy to
prove directly from the definitions.
The definition of the pullback and pushforward maps in Section \ref{sec:gysin} is made cleaner by thinking about fans, though it would also
be possible to define these maps by taking Propositions \ref{DefinitionXPullback}, \ref{DefinitionXPushforward}, \ref{DefinitionUnderlinedPull}, \ref{DefinitionUnderlinedPush}, \ref{DefinitionYPull}, and \ref{DefinitionYPush} as definitions. 
Finally, and most importantly, the fan perspective will be essential for understanding
the ample classes that appear in Theorem \ref{TheoremChowKahlerPackage}.
\end{remark}

\subsection{Fans}\label{sec:fans}

Let $E$ be a finite set, and let $\mathrm{M}$ be a loopless matroid of rank $d$ on the ground set $E$.
%We suppose throughout that $\mathrm{M}$ is a \emph{loopless} matroid:
%The empty subset of $E$ is an independent set of $\mathrm{M}$.
We write $\text{rk}_\mathrm{M}$ for the rank function of $\mathrm{M}$,
and write $\text{cl}_\mathrm{M}$ for the closure operator of $\mathrm{M}$, which for a set $S$ returns the smallest flat containing $S$.  
%When the matroid $\mathrm{M}$ is clear from the context, we omit it from the notation.
The \emph{independence complex} $\mathrm{I}_\mathrm{M}$ of $\mathrm{M}$ is the simplicial complex of independent sets of $\mathrm{M}$.  A set $I\subseteq E$ is independent if and only if the rank of $\text{cl}_{\mathrm{M}}(I)$ is $|I|$.
The vertices of $\mathrm{I}_\mathrm{M}$  are the elements of the ground set $E$, and
a collection of vertices  is a face of $\mathrm{I}_\mathrm{M}$ when the corresponding set of elements is an independent set of $\mathrm{M}$.
The \emph{Bergman complex} $\underline{\Delta}_\mathrm{M}$ of $\mathrm{M}$ is the order complex of the poset of nonempty proper flats of $\mathrm{M}$.
The vertices of $\underline{\Delta}_\mathrm{M}$ are the nonempty proper flats of $\mathrm{M}$, and
a collection of vertices   is a face  of $\underline{\Delta}_\mathrm{M}$ when the corresponding set of flats is a flag.
The independence complex of $\mathrm{M}$ is pure of dimension $d-1$,
and the Bergman complex of $\mathrm{M}$ is pure of dimension $d-2$.
For a detailed study of the simplicial complexes  $\mathrm{I}_\mathrm{M}$ and $\underline{\Delta}_\mathrm{M}$, we refer to  \cite{Bjorner}.
We introduce the \emph{augmented Bergman complex} $\Delta_\mathrm{M}$ of $\mathrm{M}$ as a simplicial complex that interpolates
between the independence complex and the Bergman complex of $\mathrm{M}$.

\begin{definition}
Let $I$ be an independent set of $\mathrm{M}$, and let $\mathscr{F}$ be a flag of proper flats of $\mathrm{M}$.
When $I$ is contained in every flat in $\mathscr{F}$, we say that $I$ is \emph{compatible} with $\mathscr{F}$ and write $I \le \mathscr{F}$.
The \emph{augmented Bergman complex} $\Delta_\mathrm{M}$ of $\mathrm{M}$ is the simplicial complex of 
all compatible pairs  $I \le \mathscr{F}$,
where $I$ is an independent set of $\mathrm{M}$ and $\mathscr{F}$ is a flag of proper flats of $\mathrm{M}$.
\end{definition}

A vertex of the augmented Bergman complex $\Delta_\mathrm{M}$ is either a singleton subset of $E$ or a proper flat of $\mathrm{M}$.
More precisely, the vertices of $\Delta_\mathrm{M}$ are the compatible pairs either of the form $\{i\} \le \varnothing$ or of the 
form $\varnothing \le \{F\}$, where
$i$ is an element of $E$ and $F$ is a proper flat of $\mathrm{M}$.
The augmented Bergman complex  contains both the independence complex $\mathrm{I}_\mathrm{M}$
and the Bergman complex $\underline{\Delta}_\mathrm{M}$ as subcomplexes.
In fact, $\Delta_\mathrm{M}$  contains the order complex of the poset of proper flats of $\mathrm{M}$, which is
the cone over  the Bergman complex with the cone point corresponding to the empty flat.
It is straightforward to check that $\Delta_\mathrm{M}$ is pure of dimension $d-1$.

\begin{proposition}\label{PropositionConnected}
The Bergman complex and the augmented Bergman complex of $\mathrm{M}$ are both connected in codimension $1$.
%\begin{enumerate}[(1)]\itemsep 5pt
%\item The independence complex of $\mathrm{M}$ is connected in codimension $1$.
%\item The Bergman complex of $\mathrm{M}$ is connected in codimension $1$.
%\item The augmented Bergman complex of $\mathrm{M}$ is connected in codimension $1$.
%\end{enumerate}
\end{proposition}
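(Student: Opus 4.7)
The plan is to verify for each complex that its dual graph---whose vertices are the facets and whose edges connect pairs of facets sharing a codimension-one face---is connected. Both statements will be deduced from a single combinatorial lemma about the lattice of flats: \emph{the graph on maximal chains of $\mathscr{L}(\mathrm{M})$, with edges joining chains that differ in exactly one element, is connected.}

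To prove this lemma I would induct on the rank $d$. The cases $d \le 2$ are immediate, since any two maximal chains either coincide or differ in a single element. For $d \ge 3$ and two maximal chains $C = (\varnothing < x_1 < \cdots < E)$ and $C' = (\varnothing < y_1 < \cdots < E)$, the case $x_1 = y_1$ reduces to the interval $[x_1, E]$, which is itself a geometric lattice of rank $d - 1$, so the inductive hypothesis applies and lifts back. When $x_1 \ne y_1$, semimodularity of $\mathscr{L}(\mathrm{M})$ forces $z := x_1 \vee y_1$ to have rank exactly $2$; fixing any maximal chain of $[z, E]$ and prepending to it either $\varnothing < x_1 < z$ or $\varnothing < y_1 < z$ yields chains $C_1, C_2$ that differ only at rank $1$ and are therefore adjacent, while the previous case gives $C \sim C_1$ and $C_2 \sim C'$.

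For $\underline{\Delta}_\mathrm{M}$, the facets are precisely the maximal chains of $\mathscr{L}(\mathrm{M})$ with $\varnothing$ and $E$ stripped off, and two facets share a codimension-one face exactly when they differ in a single flat; the lemma supplies the required connectivity directly. For $\Delta_\mathrm{M}$, I would first describe the facets as the compatible pairs $(I, \mathscr{F})$ in which $I$ is a basis of the smallest flat appearing in $\mathscr{F}$ (or of $\text{cl}_\mathrm{M}(I)$ when $\mathscr{F}$ is empty) and $\mathscr{F}$ is a saturated chain of proper flats whose maximum has rank $d-1$. I would then show that any facet with $|I| \ge 1$ is dual-graph-adjacent to one with strictly smaller $I$: pick any $i \in I$, remove it from $I$, and prepend $\text{cl}_\mathrm{M}(I \setminus \{i\})$ to $\mathscr{F}$; independence of $I$ ensures this new flat properly refines the previous minimum, so the result is again a facet sharing all but one vertex with the original. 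Iterating drives every facet to one of the form $(\varnothing, \mathscr{F})$ with $\mathscr{F}$ a maximal chain of proper flats through $\varnothing$; such facets are in bijection with maximal chains of $\mathscr{L}(\mathrm{M})$ in a way that matches adjacency, so the lemma applies once more.

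The main obstacle is the $x_1 \ne y_1$ case of the chain lemma, which is where one must invoke semimodularity of $\mathscr{L}(\mathrm{M})$ to produce the common rank-$2$ element $x_1 \vee y_1$; everything else reduces to bookkeeping about dimensions of faces and which vertices they contain.
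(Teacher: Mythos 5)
Your argument is correct, and the half of it concerning the augmented Bergman complex is essentially the paper's proof: given a facet $I\le\mathscr{F}$, delete an element $i$ from $I$ and adjoin $\text{cl}_{\mathrm{M}}(I\setminus i)$ to $\mathscr{F}$ to obtain an adjacent facet with smaller independent set, and iterate until you reach a facet of the form $\varnothing\le\mathscr{F}$; these facets form the cone over the Bergman complex, so the problem reduces to the first statement. Where you diverge is in how you handle the Bergman complex itself. The paper simply cites Bj\"orner's shellability of the order complex of the proper part of a geometric lattice, which is a strictly stronger property than connectivity in codimension $1$. You instead prove directly that the graph on maximal chains of $\mathscr{L}(\mathrm{M})$, with edges given by single-element swaps, is connected, by induction on rank: the only nontrivial case is two chains with distinct atoms $x_1\ne y_1$, where submodularity gives $\text{rk}(x_1\vee y_1)=2$ and lets you build a pair of adjacent chains through $x_1\vee y_1$ bridging the two. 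This is a correct and genuinely more elementary argument; what it buys is self-containedness (no appeal to shellability theory), at the cost of establishing only the weaker connectivity statement, which is all that is needed here. One small point worth making explicit when you write this up: in matching facets of $\underline{\Delta}_{\mathrm{M}}$ with maximal chains of $\mathscr{L}(\mathrm{M})$, two adjacent chains can only differ at a nonempty proper flat (since $\varnothing$ and $E$ lie in every maximal chain), so the swap really does correspond to a shared codimension-one face of the complex.
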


\begin{proof}
The statement about the Bergman complex is a direct consequence of its shellability \cite{Bjorner}.
We prove the statement about the augmented Bergman complex using the statement about the Bergman complex.

The claim is that, given any two facets of $\Delta_\mathrm{M}$, one may travel from one facet to the other by passing through faces of codimension at most $1$.
Since the Bergman complex of $\mathrm{M}$ is connected in codimension $1$, the subcomplex of $\Delta_\mathrm{M}$ consisting of faces of the form $\varnothing \le \mathscr{F}$ is connected in codimension $1$.
Thus it suffices to show that any  facet of  $\Delta_\mathrm{M}$ can be connected to a facet of the form $\varnothing\le \mathscr{F}$ through codimension $1$ faces.

Let $I \le \mathscr{F}$ be a facet of $\Delta_{\mathrm{M}}$.
If $I$ is nonempty, choose any element $i$ of $I$, and consider the flag of flats $\mathscr{G}$ obtained by adjoining the closure of $I\setminus i$  to $\mathscr{F}$.
The independent set $I \setminus i$ is compatible with the flag $\mathscr{G}$, and the facet $I \le \mathscr{F}$ is adjacent to the facet
$I\setminus i \le \mathscr{G}$.
Repeating the procedure,
we can connect the given facet to a facet of the desired form through codimension $1$ faces.
\end{proof}

% \subsection{}%{The augmented Bergman fan of a matroid}

Let $\mathbb{R}^E$ be the vector space spanned  by the standard basis vectors $\mathbf{e}_i$ corresponding to the elements $i \in E$.
For an arbitrary subset $S \subseteq E$, we set
\[
\mathbf{e}_S \coloneq \sum_{i \in S}\mathbf{e}_i.
\]
For an element $i \in E$, we write $\rho_i$ for the ray generated by the vector $\mathbf{e}_i$ in $\mathbb{R}^E$.  
For a subset $S\subseteq E$, we write $\rho_S$ for the ray generated by the vector $-\mathbf{e}_{E \setminus S}$ in $\mathbb{R}^E$,
and write $\underline{\rho}_S$ for the ray generated by the vector $\mathbf{e}_S$ in $\mathbb{R}^E/\langle \mathbf{e}_S\rangle$.
Using these rays, we construct  fan models of the Bergman complex and the augmented Bergman complex  as follows.

\begin{definition}\label{DefinitionAugmentedBergmanFan}
The \emph{Bergman fan} $\underline{\Pi}_\mathrm{M}$ of $\mathrm{M}$ is a simplicial fan in the quotient space $\mathbb{R}^E / \langle \mathbf{e}_E \rangle$
with rays $\underline{\rho}_F$  for nonempty proper flats $F$ of $\mathrm{M}$.
The cones of $\underline{\Pi}_\mathrm{M}$  are of the form
\[
\underline{\sigma}_\mathscr{F} \coloneq \text{cone}\{ \mathbf{e}_F\}_{F \in \mathscr{F}} =\text{cone}\{ -\mathbf{e}_{E \setminus F}\}_{F \in \mathscr{F}},
\]
where $\mathscr{F}$ is a flag of nonempty proper flats of $\mathrm{M}$.

The \emph{augmented Bergman fan} $\Pi_\mathrm{M}$ of $\mathrm{M}$
is a simplicial fan in $\mathbb{R}^E$ with rays $\rho_i$ for elements $i$ in $E$ and $\rho_F$ for proper flats $F$ of $\mathrm{M}$.
The cones of the augmented Bergman fan are of the form
\[
\sigma_{I \le \mathscr{F}} \coloneq \text{cone}\{\mathbf{e}_i\}_{i \in I}+\text{cone}\{ -\mathbf{e}_{E \setminus F}\}_{F \in \mathscr{F}},
\]
where $\mathscr{F}$ is a flag of proper flats  of $\mathrm{M}$
and $I$ is an independent set of $\mathrm{M}$ compatible with $\mathscr{F}$.
We write $\sigma_I$ for the cone $\sigma_{I \le \mathscr{F}}$ when $\mathscr{F}$ is the empty flag of flats of $\mathrm{M}$. %, and write  $\sigma_\mathscr{F}$  for the cone $\sigma_{I \le \mathscr{F}}$  when $I$ is the empty independent set of $\mathrm{M}$.
\end{definition}

\begin{remark}\label{star}
If $E$ is nonempty, then
the Bergman fan $\underline{\Pi}_{\mathrm{M}}$ is the star of the ray $\rho_\varnothing$ in the augmented Bergman fan $\Pi_{\mathrm{M}}$.
If $E$ is empty, then $\underline{\Pi}_{\mathrm{M}}$ and $\Pi_{\mathrm{M}}$ both consist of a single $0$-dimensional cone.
\end{remark}

\begin{figure}[h]
\begin{tikzpicture}
\node[circle,fill,inner sep=1.5pt] (center) at (0,0) {};
\draw[thick] (0,-3.5) -- (0,-0.125); %negative y-axis
\draw[thick] (0,0.125) -- (0,3.5); %positive y-axis
\draw[thick] (-3.5,0) -- (-0.125,0); %negative x-axis
\draw[thick] (0.125,0) -- (3.5,0); %positive x-axis
\draw[thick] (-0.125,-0.125) -- (-3.5,-3.5); %down-left diagonal

\node (origin) at (.65,.25) {\Small$\varnothing \le \varnothing$}; %origin
\node (pxaxis) at (3.25,.25) {\Small$\{1\} \le \varnothing$}; %positive x-axis
\node (nxaxis) at (-3,.25) {\Small$\varnothing \le \{\{2\}\}$}; %negative x-axis
\node (pyaxis) at (.75,3.25) {\Small$\{2\} \le \varnothing$}; %positive y-axis
\node (nyaxis) at (.95,-3.25) {\Small$\varnothing \le \{\{1\}\}$}; %negative y-axis
\node (dldiag) at (-4.15,-3.25) {\Small$\varnothing \le \{\varnothing\}$}; %down-left diagonal
\node (firstquad) at (1.75,1.65) {\Small$\{1,2\} \le \varnothing$}; %first quadrant
\node (secondquad) at (-1.75, 1.65) {\Small$\{2\} \le \{\{2\}\}$}; %second quadrant
\node (thirdquadtop) at (-3, -1.35) {\Small$\varnothing \le \{\varnothing,\{2\}\}$}; %third quadrant top
\node (thirdquadbot) at (-1.2,-2.45) {\Small$\varnothing \le \{\varnothing,\{1\}\}$}; %third quadrant bottom
\node (fourthquad) at (1.75,-1.65) {\Small$\{1\} \le \{\{1\}\}$}; %fourth quadrant
\end{tikzpicture}
\caption{The augmented Bergman fan of the rank $2$ Boolean matroid on $\{1,2\}$.}
\end{figure}
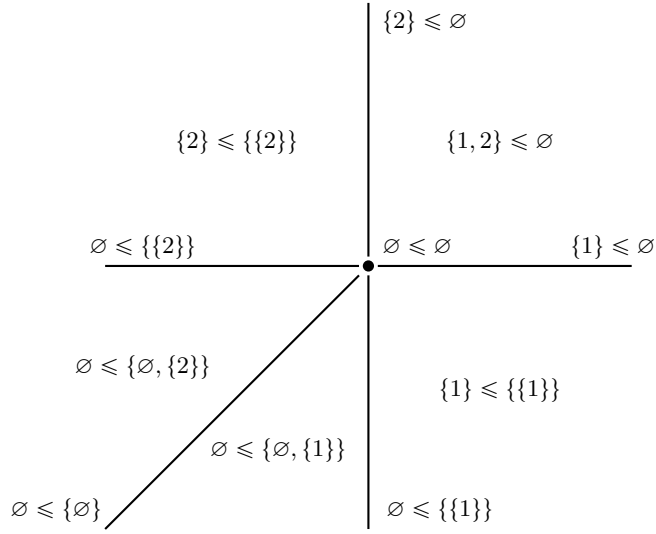

Let $\mathrm{N}$ be another loopless matroid on $E$.
The matroid $\mathrm{M}$ is said to be a \emph{quotient} of $\mathrm{N}$ if every flat of $\mathrm{M}$ is a flat of $\mathrm{N}$.
The condition implies that every independent set of $\mathrm{M}$ is an independent set of $\mathrm{N}$ \cite[Proposition 8.1.6]{Kung}.
Therefore, 
when $\mathrm{M}$ is a quotient of  $\mathrm{N}$, 
the augmented Bergman fan of $\mathrm{M}$ is a subfan of the augmented Bergman fan of $\mathrm{N}$,
and the  Bergman fan of $\mathrm{M}$ is a subfan of the Bergman fan of $\mathrm{N}$.
In particular, %for any loopless matroid $\mathrm{M}$ on $E$,
%the augmented Bergman fan
we have inclusions of fans
 $\Pi_\mathrm{M} \subseteq \Pi_{\mathrm{B}}$
and  $\underline{\Pi}_\mathrm{M} \subseteq \underline{\Pi}_{\mathrm{B}}$,
where $\mathrm{B}$ is the \emph{Boolean matroid} on $E$ defined by the condition that $E$ is an independent set of $\mathrm{B}$.

\begin{proposition}\label{PropositionBoolean}
The Bergman fan and the augmented Bergman fan of $\mathrm{B}$ are each normal fans of convex polytopes.
%\begin{enumerate}[(1)]\itemsep 5pt
%\item The Bergman fan of $\mathrm{B}$ is the normal fan of a convex polytope.
%\item The augmented Bergman fan of $\mathrm{B}$  is the normal fan of a convex polytope.
%\end{enumerate}
In particular, there are strictly convex piecewise linear functions on $\underline{\Pi}_\mathrm{B}$ and $\Pi_\mathrm{B}$.
\end{proposition}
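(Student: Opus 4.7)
The plan is to exhibit a convex polytope whose inner normal fan coincides with each fan; the support function of such a polytope is then a strictly convex piecewise linear function. Equivalently, one can specify a PL function by its values on the rays and verify strict convexity directly by a submodularity check on each wall.

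For the Bergman fan $\underline{\Pi}_{\mathrm{B}}$, the classical permutohedron
\[
P \;=\; \mathrm{conv}\bigl\{(\sigma(1),\ldots,\sigma(n)) : \sigma \in S_n\bigr\} \subset \mathbb{R}^E, \qquad n = |E|,
\]
does the job. Since $P$ lies in a hyperplane orthogonal to $\mathbf{e}_E$, it descends to $\mathbb{R}^E/\langle\mathbf{e}_E\rangle$, and the inner normal cone at the vertex corresponding to a linear order $(i_1,\ldots,i_n)$ of $E$ is spanned by the classes $[\mathbf{e}_{F_k}]$ for $F_k = \{i_1,\ldots,i_k\}$, $k = 1,\ldots,n-1$. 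These are exactly the maximal cones $\underline{\sigma}_{\mathscr{F}}$ of Definition \ref{DefinitionAugmentedBergmanFan}. Equivalently, one may build $\underline{\ell}$ directly by choosing any strictly submodular function $f \colon 2^E \to \mathbb{R}$ with $f(\varnothing) = f(E) = 0$, e.g.\ $f(S) = |S|(n-|S|)$, and setting $\underline{\ell}(\mathbf{e}_F) = f(F)$ on each ray; strict submodularity is the standard criterion that makes the resulting PL function strictly convex on $\underline{\Pi}_{\mathrm{B}}$.

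For the augmented Bergman fan $\Pi_{\mathrm{B}}$, the required polytope is the stellahedron, which one can realize concretely as
\[
Q \;=\; \bigl\{\mathbf{x} \in \mathbb{R}^E \;\big|\; x_i \ge 0 \text{ for all } i \in E,\; \textstyle\sum_{j \notin F} x_j \le c_F \text{ for all proper } F \subsetneq E\bigr\},
\]
where $F \mapsto c_F$ is a strictly submodular function (with $c_E = 0$) chosen large enough that every listed inequality is facet-defining. I would verify that the vertices of $Q$ are in bijection with the maximal compatible pairs $I \le \mathscr{F}$ (those satisfying $|I| + |\mathscr{F}| = n$), and that the inner normal cone at the vertex indexed by $(I, \mathscr{F})$ is exactly the cone $\sigma_{I \le \mathscr{F}}$ generated by $\mathbf{e}_i$ for $i \in I$ together with $-\mathbf{e}_{E \setminus F}$ for $F \in \mathscr{F}$. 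The support function of $Q$ is then the desired $\ell$.

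The main obstacle is the verification in the augmented case: since $Q$ intermixes lower-bound facets (from the $\mathbf{e}_i$) with upper-bound facets (from the proper flats), one has to pin down the submodularity condition on $F \mapsto c_F$ that simultaneously guarantees facet-definition and the vertex-to-pair bijection. A clean way to organize this is to realize $Q$ as a Minkowski sum of simplices indexed by tubes of the star graph $K_{1,n}$, i.e.\ via the graph-associahedron description of the stellahedron; this exhibits $Q$ as a deformation of the permutohedron truncated near the origin, and makes the identification of its inner normal fan with $\Pi_{\mathrm{B}}$ transparent. Once both normal-fan identifications are in hand, the support functions of $P$ and $Q$ furnish the strictly convex piecewise linear functions $\underline{\ell}$ and $\ell$ required by Theorem \ref{TheoremChowKahlerPackage}.
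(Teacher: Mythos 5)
Your treatment of the Bergman fan is exactly the paper's: $\underline{\Pi}_{\mathrm{B}}$ is the normal fan of the standard permutohedron in $\mathbf{e}_E^{\perp}$, and the submodularity remark is the standard way to see which piecewise linear functions are strictly convex there. For the augmented fan you take a genuinely different route. The paper does not construct the stellahedron at all: it identifies $\Pi_{\mathrm{B}}$ with the fan $\Sigma_{\mathscr{P}}$ of \cite[Definition 2.3]{AHK} for the order filter $\mathscr{P}$ of subsets of $E\cup 0$ containing $0$, and then invokes \cite[Proposition 2.4]{AHK} to say this fan is an iterated stellar subdivision of the normal fan of the simplex $\mathrm{conv}\{\mathbf{e}_i,\mathbf{e}_E\}_{i\in E}$; since each stellar subdivision corresponds to a truncation of the polytope, projectivity follows with no facet bookkeeping. (The stellahedron appears in the paper only as a footnote.) Your approach instead builds the polytope $Q$ directly by inequalities $x_i\ge 0$ and $\sum_{j\notin F}x_j\le c_F$ and proposes to match its vertices with the maximal compatible pairs $I\le\mathscr{F}$, falling back on the graph-associahedron (Minkowski sum of simplices for tubes of the star graph) realization to make the normal-fan identification transparent. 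Both routes are valid; the paper's buys brevity by outsourcing the combinatorial verification to a cited proposition about order filters, while yours, once the facet/vertex analysis or the Minkowski-sum argument is actually carried out (you correctly flag this as the remaining work, and it is the one substantive verification your write-up leaves open), yields an explicit polytope and an explicit strictly convex support function, which is more concrete than what the paper provides.
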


%\begin{remark}
The above proposition can be used to show that the augmented Bergman fan  and the Bergman fan of $\mathrm{M}$ are, in fact, fans.
%\end{remark}

\begin{proof}
The statement for the Bergman fan is well-known: The Bergman fan of $\mathrm{B}$ is the normal fan of the standard permutohedron in 
$\mathbf{e}_E^\perp \subseteq \mathbb{R}^E$.
See, for example, \cite[Section 2]{AHK}. 
The statement for the augmented Bergman fan $\Pi_{\mathrm{B}}$ follows from the fact that it is an iterated stellar subdivision of the normal fan of the  simplex
\[
\text{conv}\{\mathbf{e}_i,\mathbf{e}_E\}_{i \in E} \subseteq \mathbb{R}^E.
\]
More precisely, $\Pi_\mathrm{B}$ is isomorphic to the fan $\Sigma_\mathscr{P}$ in \cite[Definition 2.3]{AHK},
where $\mathscr{P}$ is the order filter of all subsets of $E \cup 0$ containing the new element $0$, via the linear isomorphism
\[
\mathbb{R}^E \longrightarrow \mathbb{R}^{E \cup 0}/\langle \mathbf{e}_{E}+\mathbf{e}_0\rangle, \quad \mathbf{e}_j \longmapsto \mathbf{e}_j.
\]
It is shown in \cite[Proposition 2.4]{AHK} that $\Sigma_\mathscr{P}$ is an iterated stellar subdivision of the normal fan of the simplex.\footnote{In fact, the augmented Bergman fan $\Pi_\mathrm{B}$ is the normal fan of the stellahedron in $\mathbb{R}^E$, the graph associahedron of the star graph with $|E|$ endpoints.
We refer to \cite{CD} and \cite{Devadoss} for detailed discussions of graph associahedra and their realizations.}
\end{proof}

A direct inspection shows that $\Pi_\mathrm{M}$ is a \emph{unimodular fan}; that is, the set of primitive ray generators in any cone in $\Pi_\mathrm{M}$ is a subset of a basis of the free abelian group $\mathbb{Z}^E$.
It follows  that $\underline{\Pi}_\mathrm{M}$ is also a unimodular fan;
that is, the set of primitive ray generators in any cone in $\Pi_\mathrm{M}$ is a subset of a basis of  the free abelian group $\mathbb{Z}^E/ \langle \mathbf{e}_E \rangle$.

\subsection{Stars}\label{sec:stars}

For any element $i$ of $E$, we write $\text{cl}(i)$ for the closure of $i$ in $\mathrm{M}$, and write $\iota_i$  for the injective linear map
\[
\iota_i:\mathbb{R}^{E \setminus \text{cl}(i)} \longrightarrow \mathbb{R}^{E}/\langle \mathbf{e}_i \rangle, \qquad \mathbf{e}_j \longmapsto \mathbf{e}_j.
\]
For any proper flat $F$ of $\mathrm{M}$, we write $\iota_F$ for the  linear isomorphism
\[
\iota_F:\mathbb{R}^{E \setminus F} / \langle \mathbf{e}_{E \setminus F} \rangle\ \oplus \ \mathbb{R}^F \longrightarrow \mathbb{R}^E  / \langle \mathbf{e}_{E \setminus F} \rangle,  \qquad \mathbf{e}_j \longmapsto \mathbf{e}_j.
\]
For any nonempty proper flat $F$ of $\mathrm{M}$, we write $\underline{\iota}_F$ for the linear isomorphism
\[
\underline{\iota}_F:\mathbb{R}^{E \setminus F} / \langle \mathbf{e}_{E \setminus F} \rangle\ \oplus \ \mathbb{R}^F/ \langle \mathbf{e}_F \rangle \longrightarrow \mathbb{R}^E  / \langle \mathbf{e}_E, \mathbf{e}_{E \setminus F} \rangle,  \qquad \mathbf{e}_j \longmapsto \mathbf{e}_j.
\]
Let $\mathrm{M}^F$ be the localization of $\mathrm{M}$ at $F$,
and let $\mathrm{M}_F$ be the contraction of $\mathrm{M}$ by $F$.
%using these linear maps.
%For any proper flat $F$ of $\mathrm{M}$, we set\footnote{The symbols $\mathrm{M}^F$ and $\mathrm{M}_F$
%appear inconsistently in the literature, sometimes this way and sometimes interchanged.
%The localization is frequently called the restriction.  On the other hand, the contraction is also sometimes called the restriction, especially in the context of hyperplane arrangements,
%so we avoid the word restriction to minimize ambiguity.}
%\begin{align*}
%\mathrm{M}^F&\coloneq\text{the localization
% of $\mathrm{M}$ at $F$, a loopless matroid on $F$ of rank equal to $\text{rk}_\mathrm{M}(F)$},\\
%\mathrm{M}_F&\coloneq\text{the contraction of $\mathrm{M}$ by $F$, a loopless matroid on $E \setminus F$  of rank equal to $\text{crk}_\mathrm{M}(F)$}.
%\end{align*}
%We refer to \cite{Oxley} and \cite{Welsh} for the localization, the contraction, and other basic constructions in matroid theory.

\begin{proposition}\label{PropositionStars}
The following are descriptions of the stars of the rays in $\Pi_\mathrm{M}$ and $\underline{\Pi}_\mathrm{M}$ using the three linear maps above.
%Let $i$ be an element of $E$, and let $F$ be a proper flat of $\mathrm{M}$.
\begin{enumerate}[(1)]\itemsep 5pt
\item\label{PropositionStars1} For any element $i\in E$, the linear map $\iota_i$ identifies the augmented Bergman fan of $\mathrm{M}_{\text{cl}(i)}$ with the star of  the ray $\rho_i$ in the augmented Bergman fan of $\mathrm{M}$:
\[
\Pi_{\mathrm{M}_{\text{cl}(i)}} \cong \text{star}_{\rho_i} \Pi_\mathrm{M}.
\]
\item\label{PropositionStars2} For any proper flat $F$ of $\mathrm{M}$, the linear map $\iota_F$ identifies the product of the Bergman fan of $\mathrm{M}_F$ and the augmented Bergman  fan of $\mathrm{M}^F$ with the star of the ray $\rho_F$ in the augmented Bergman fan of $\mathrm{M}$:
\[
\underline{\Pi}_{\mathrm{M}_{F}}\times \Pi_{\mathrm{M}^F} \cong \text{star}_{\rho_F} \Pi_\mathrm{M}.
\]
\item For any nonempty proper flat $F$ of $\mathrm{M}$, the linear map $\underline{\iota}_F$ identifies the product of the Bergman fan of $\mathrm{M}_F$ and the Bergman  fan of $\mathrm{M}^F$ with the star of the ray $\underline{\rho}_F$ in the Bergman fan of $\mathrm{M}$:
\[
\underline{\Pi}_{\mathrm{M}_{F}}\times \underline{\Pi}_{\mathrm{M}^F} \cong \text{star}_{\underline{\rho}_F} \underline{\Pi}_\mathrm{M}.
\]
  \end{enumerate}
%In particular,  $\underline{\Pi}_{\mathrm{M}}$ is the star of the ray of the empty flat  in $\Pi_{\mathrm{M}}$.
\end{proposition}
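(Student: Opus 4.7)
All three parts are parallel, and I would prove them by reading off the claim from the combinatorial description of cones in Definition \ref{DefinitionAugmentedBergmanFan}, using that $\Pi_\mathrm{M}$ and $\underline{\Pi}_\mathrm{M}$ are simplicial and unimodular. For each part I would carry out three steps: (a) determine which cones of the ambient fan contain the specified ray; (b) exhibit a combinatorial bijection between those cones and the cones of the target fan (or product of fans); (c) check that the given linear map carries the primitive ray generators of each target cone onto the primitive ray generators of its image in the star.

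For step (a), simpliciality lets me read off containment from ray generators alone: a cone $\sigma_{I \le \mathscr{F}}$ of $\Pi_\mathrm{M}$ contains $\rho_i$ iff $i \in I$, since $\mathbf{e}_i$ cannot be obtained as a nonnegative combination of the other generators; similarly $\rho_F$ (resp.\ $\underline{\rho}_F$) lies in $\sigma_{I\le\mathscr{F}}$ (resp.\ $\underline{\sigma}_{\mathscr{F}}$) iff $F \in \mathscr{F}$. For step (b) in part (1), the compatibility $I \le \mathscr{F}$ combined with $i \in I$ forces $\mathrm{cl}(i) \subseteq F$ for every $F \in \mathscr{F}$, and the independence of $I$ forces $I \setminus i \subseteq E \setminus \mathrm{cl}(i)$; since $\{i\}$ is a basis of $\mathrm{cl}(i)$, the assignment $(I,\mathscr{F}) \mapsto (I\setminus i, \{F \setminus \mathrm{cl}(i)\}_{F \in \mathscr{F}})$ is a bijection onto the faces of $\Delta_{\mathrm{M}_{\mathrm{cl}(i)}}$. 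For (2), the condition $F \in \mathscr{F}$ splits $\mathscr{F}$ uniquely as $\mathscr{F}_{<F} \cup \{F\} \cup \mathscr{F}_{>F}$, where $\mathscr{F}_{<F}$ is a flag of proper flats of $\mathrm{M}^F$ compatible with $I \subseteq F$, while $\{G \setminus F : G \in \mathscr{F}_{>F}\}$ is a flag of nonempty proper flats of $\mathrm{M}_F$; this produces the bijection with cones of $\underline{\Pi}_{\mathrm{M}_F} \times \Pi_{\mathrm{M}^F}$. Part (3) is the same splitting, now restricted to flags of nonempty proper flats.

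For step (c), the essential identities are $-\mathbf{e}_{E \setminus F} \equiv -\mathbf{e}_{(E \setminus \mathrm{cl}(i)) \setminus (F \setminus \mathrm{cl}(i))} \pmod{\mathbf{e}_i}$ when $i \in F$, and $\mathbf{e}_{G \setminus F} \equiv -\mathbf{e}_{E \setminus G} \pmod{\mathbf{e}_{E \setminus F}}$ when $F \subseteq G$; both are immediate from the fact that the relevant sets partition disjointly. Combined with the tautological identity on $\mathbf{e}_j$, these show that $\iota_i$, $\iota_F$, and $\underline{\iota}_F$ send primitive ray generators bijectively to primitive ray generators of the star. Since a unimodular simplicial fan is determined by the set of its cones and each cone by its ray generators, the bijections on rays and on cones assemble into the claimed isomorphisms of fans. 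I do not expect any genuine obstacle; the only care required is bookkeeping---keeping track of which ground set of which matroid sits in which of the quotient spaces $\mathbb{R}^E/\langle \mathbf{e}_i\rangle$, $\mathbb{R}^E/\langle \mathbf{e}_{E \setminus F}\rangle$, and $\mathbb{R}^E/\langle \mathbf{e}_E, \mathbf{e}_{E \setminus F}\rangle$, and handling the edge case where $\mathrm{cl}(i)$ or $F$ itself appears as one of the flats of $\mathscr{F}$, in which case its image is the empty flat of the appropriate contraction.
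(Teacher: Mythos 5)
Your proposal is correct and takes essentially the same route as the paper, which simply records the two combinatorial facts you use in part (1) (a flat contains $i$ iff it contains $\mathrm{cl}(i)$, and an independent set containing $i$ meets $\mathrm{cl}(i)$ only in $i$) and declares parts (2) and (3) to follow directly from the definitions. Your write-up is a faithful, more detailed expansion of that argument; the bijections on cones and the verifications on primitive ray generators all check out.
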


Repeated applications of the first statement %Proposition  \ref{PropositionStars} %(\ref{PropositionStars1}) 
show that, for any independent set $I$ of $\mathrm{M}$,
the star of the cone $\sigma_{I}$ in $\Pi_\mathrm{M}$ can be identified with the augmented Bergman fan of $\mathrm{M}_{\text{cl}(I)}$,
where $\text{cl}(I)$ is the closure of $I$ in $\mathrm{M}$.

\begin{proof}
The first statement follows from the following facts: A flat of $\mathrm{M}$ contains $i$ if and only if it contains $\text{cl}(i)$,
and an independent set of $\mathrm{M}$ containing $i$ does not contain any other element in $\text{cl}(i)$.
The second and third statements follow directly from the definitions.
\end{proof}

\subsection{Weights}\label{sec:weights}

For any simplicial fan $\Sigma$, we write $\Sigma_k$ for the set of $k$-dimensional cones in $\Sigma$.
If $\tau$ is a codimension $1$ face of a cone $\sigma$, we write
\[
\mathbf{e}_{\sigma/\tau}\coloneq \text{the primitive generator of the unique ray in $\sigma$ that is not in $\tau$}.
\]
A \emph{$k$-dimensional balanced weight} on $\Sigma$ is a $\mathbb{Q}$-valued function 
$\omega$ on $\Sigma_k$ that satisfies the \emph{balancing condition}: For every $(k-1)$-dimensional cone $\tau$ in $\Sigma$,
\[
\text{$\sum_{\tau \subset \sigma} \omega(\sigma) \mathbf{e}_{\sigma/\tau}$ is contained in the subspace spanned by $\tau$,}
\]
where the sum is over all $k$-dimensional cones $\sigma$ containing $\tau$.
We write $\mathrm{MW}_k(\Sigma)$ for the group of $k$-dimensional balanced weights on $\Sigma$.

\begin{proposition}\label{PropositionUniqueBalancing}
The Bergman fan and the augmented Bergman fan of $\mathrm{M}$ have the following unique balancing property.
\begin{enumerate}[(1)]\itemsep 5pt
\item A $(d-1)$-dimensional weight on $\underline{\Pi}_\mathrm{M}$ is balanced if and only if it is constant.
\item A $d$-dimensional weight on $\Pi_\mathrm{M}$ is balanced if and only if it is constant.
\end{enumerate}
\end{proposition}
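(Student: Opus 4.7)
The plan for both parts is to translate the balancing condition at each codimension-$1$ cone $\tau$ into a direct linear relation among the weights of the top-dimensional cones containing $\tau$, and then invoke Proposition~\ref{PropositionConnected} to propagate equality of weights across the whole fan. The ``constant $\Rightarrow$ balanced'' direction falls out of the same local identities, so it suffices to establish ``balanced $\Rightarrow$ constant.'' In each instance, the strategy is to work in the quotient of the ambient vector space by the linear span of $\tau$, identify the images of the ``new'' ray generators of the top-dimensional cones containing $\tau$, and check that these images satisfy a unique nontrivial linear relation (up to scaling), which forces all adjacent weights to be equal.

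For the Bergman fan $\underline{\Pi}_\mathrm{M}$, a codimension-$1$ cone $\underline{\sigma}_\mathscr{G}$ arises by deleting a single rank-$k$ flat $F_k$ from a maximal flag $F_1 \subsetneq \cdots \subsetneq F_{d-1}$, and its top-dimensional neighbors are indexed by the rank-$k$ flats $F'$ strictly between $F_{k-1}$ and $F_{k+1}$ (with the conventions $F_0 = \varnothing$ and $F_d = E$ at the endpoints). Because these intermediate flats correspond to the rank-$1$ flats of the rank-$2$ matroid $(\mathrm{M}^{F_{k+1}})_{F_{k-1}}$, the sets $F' \setminus F_{k-1}$ partition $F_{k+1} \setminus F_{k-1}$. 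Modulo the span of $\underline{\sigma}_\mathscr{G}$ together with $\mathbf{e}_E$, the class of $\mathbf{e}_{F'}$ agrees with that of $\mathbf{e}_{F' \setminus F_{k-1}}$, and these classes are linearly independent subject only to the single relation $\sum_{F'} \mathbf{e}_{F' \setminus F_{k-1}} = \mathbf{e}_{F_{k+1} \setminus F_{k-1}} = 0$; hence balancing at $\underline{\sigma}_\mathscr{G}$ is equivalent to equality of the adjacent weights.

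For the augmented Bergman fan $\Pi_\mathrm{M}$, top-dimensional cones take the form $\sigma_{I \le \mathscr{F}}$ with $|I| + |\mathscr{F}| = d$, and codimension-$1$ cones arise in three ways: (i) deleting a non-smallest flat of $\mathscr{F}$, which reduces to the Bergman fan analysis above; (ii) deleting the smallest flat $G_r$ of $\mathscr{F}$, whose top-dimensional neighbors (besides $\sigma_{I \le \mathscr{F}}$ itself) are obtained by adjoining to $I$ an element of $G_{r+1} \setminus G_r$; or (iii) deleting an element $i$ of $I$, whose neighbors are the cones $\sigma_{((I\setminus i) \cup \{i''\}) \le \mathscr{F}}$ for $i'' \in G_r \setminus \text{cl}_\mathrm{M}(I\setminus i)$, together with the single cone $\sigma_{(I\setminus i) \le (\{\text{cl}_\mathrm{M}(I\setminus i)\} \cup \mathscr{F})}$. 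The main obstacle is case (iii), since the extra ``flat-type'' ray $-\mathbf{e}_{E \setminus \text{cl}_\mathrm{M}(I\setminus i)}$ is a priori of different combinatorial character than the ``vertex-type'' rays $\mathbf{e}_{i''}$; the key step is to verify that, modulo the span of the codimension-$1$ cone, this flat-type ray reduces to $-\sum_{i''} \mathbf{e}_{i''}$ with $i''$ ranging over $G_r \setminus \text{cl}_\mathrm{M}(I\setminus i)$. Once this and the analogous reduction in case (ii) are in hand, all three cases yield equality of adjacent weights, and Proposition~\ref{PropositionConnected} concludes the argument for both parts simultaneously.
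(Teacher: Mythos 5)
Your proposal is correct and follows essentially the same route as the paper: a local analysis at each codimension-$1$ cone showing that the primitive generators of the adjacent maximal cones satisfy (up to scalar) a unique linear relation modulo the span of that cone — coming from the flat partition property or from the identity $-\mathbf{e}_{E\setminus \text{cl}(I)}+\sum_{i\in F\setminus \text{cl}(I)}\mathbf{e}_i=-\mathbf{e}_{E\setminus F}$ — so that balancing forces adjacent weights to agree, followed by Proposition~\ref{PropositionConnected} to propagate equality. The only organizational differences are that the paper imports part (1) directly from \cite{AHK} rather than reproving it, and groups your cases (ii) and (iii) into a single case according to whether $\text{cl}(I)$ equals the smallest flat of $\mathscr{F}\cup\{E\}$.
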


\begin{proof}
The first statement is \cite[Proposition 5.2]{AHK}.
We prove the second statement.

Let $\sigma_{I \le \mathscr{F}}$ be a codimension $1$ cone of $\Pi_\mathrm{M}$,
and let $F$ be the smallest flat in $\mathscr{F} \cup \{E\}$. 
We analyze the primitive generators of the rays in the star of the cone $\sigma_{I \le \mathscr{F}}$ in $\Pi_\mathrm{M}$.
Let $\text{cl}(I)$ be the closure of $I$ in $\mathrm{M}$.
There are two cases.

When the closure of $I$ is not $F$,
 the primitive ray generators in question are $-\mathbf{e}_{E \setminus \text{cl}(I)}$ and $\mathbf{e}_i$, for elements $i$ in $F$ not in the closure of $I$.
The primitive ray generators satisfy the relation
\[
-\mathbf{e}_{E \setminus \text{cl}(I)}+\sum_{i \in F \setminus \text{cl}(I)} \mathbf{e}_i=-\mathbf{e}_{E \setminus F},
\]
which is zero modulo the span of $\sigma_{I \le \mathscr{F}}$.
As the  $\mathbf{e}_i$'s   are independent modulo the span of $\sigma_{I \le \mathscr{F}}$,
any relation between the primitive generators must be a multiple of the displayed one.

When the closure of $I$ is $F$, the fact that $\sigma_{I \le \mathscr{F}}$ has codimension $1$ implies that
there is a unique integer $k$ with $\rk F < k < \rk \mathrm{M}$ such that $\mathscr{F}$ does not include a flat of rank $k$.
Let $F_\circ$ be the unique flat in $\mathscr{F}$ of rank $k-1$, and let $F^\circ$ be the unique flat in $\mathscr{F} \cup \{E\}$
of rank $k+1$.
The primitive ray generators in question are  $-\mathbf{e}_{E \setminus G}$ for the flats $G$ in $\mathscr{G}$, 
where  $\mathscr{G}$ is the set of flats of $\mathrm{M}$ covering $F_\circ$ and covered by $F^\circ$.
By the flat partition property of matroids \cite[Section 1.4]{Oxley}, the primitive ray generators satisfy the relation
\[
\sum_{G \in \mathscr{G}} -\mathbf{e}_{E \setminus G}=-(\ell-1)\mathbf{e}_{E \setminus F_\circ} -\mathbf{e}_{E \setminus F^\circ},
\]
which is zero modulo the span of $\sigma_{I \le \mathscr{F}}$.
Since any proper subset of the primitive generators $-\mathbf{e}_{E \setminus G}$  for $G$ in $\mathscr{G}$ is independent modulo the span of $\sigma_{I \le \mathscr{F}}$,
any relation between the primitive generators must be a multiple of the displayed one.

The local analysis above shows that any constant $d$-dimensional weight on $\Pi_\mathrm{M}$ is balanced.
Since $\Pi_\mathrm{M}$ is connected in codimension $1$ by Proposition \ref{PropositionConnected},
it also shows that any $d$-dimensional balanced weight  on $\Pi_\mathrm{M}$ must be constant.
\end{proof}

\subsection{Chow rings}\label{sec:rings}

%Let $\Sigma$ be a unimodular fan in $\mathbb{R}^E$ with respect to the standard lattice $\mathbb{Z}^E$.
%Let $S_\Sigma$ be the polynomial ring over $\mathbb{Q}$ with variables indexed by the set $V_\Sigma$ of primitive ray generators of $\Sigma$:
%\[
%S_\Sigma\coloneq \mathbb{Q}[x_\mathbf{e}]_{\mathbf{e} \in V_\Sigma}.
%\]
%The Chow ring of $X_\Sigma$ is
%\[
%\mathrm{CH}(\Sigma)\coloneq S_\Sigma / (I_\Sigma+J_\Sigma)
%\]
%The Chow ring of the associated smooth toric variety $X_\Sigma$, with rational coefficients, can be identified with $\mathrm{CH}(\Sigma)$ \cite{Brion}.

% Let $N_\mathbb{R}$ be the real vector space $N\otimes_\mathbb{Z} \mathbb{R}$,
% where $N$ is a free abelian group of rank $n$.
Any unimodular fan $\Sigma$ in $\mathbb{R}^E$ defines a graded commutative algebra $\mathrm{CH}(\Sigma)$,
which is the Chow ring of the associated smooth toric variety $X_\Sigma$ over $\mathbb{C}$ with rational coefficients. %\cite[Definition 5.4]{AHK}.
Equivalently, $\mathrm{CH}(\Sigma)$ is the ring of continuous piecewise polynomial functions on $\Sigma$ with rational coefficients 
modulo the ideal generated by globally linear functions \cite[Section 3.1]{Brion}.
We write $\mathrm{CH}^k(\Sigma)$ for the Chow group of codimension $k$ cycles in $X_\Sigma$, so that
\[
\mathrm{CH}(\Sigma)=\bigoplus_{k} \mathrm{CH}^k(\Sigma).
\]
The group of $k$-dimensional balanced weights on $\Sigma$ is related to $\mathrm{CH}^k(\Sigma)$ by the isomorphism
\[
\mathrm{MW}_k(\Sigma) \longrightarrow \mathrm{Hom}_\mathbb{Q}(\mathrm{CH}^k(\Sigma),\mathbb{Q}), \quad \omega \longmapsto (x_\sigma \longmapsto \omega(\sigma)),
\]
where $x_\sigma$ is the class of the torus orbit closure in $X_\Sigma$ corresponding to a $k$-dimensional cone $\sigma$ in $\Sigma$.
%The element $x_\sigma$ is the product of the divisor classes $x_\rho$ corresponding to the rays $\rho$ in $\sigma$.
See \cite[Section 5]{AHK} for a detailed discussion.
For general facts on toric varieties and Chow rings, and for any undefined terms, we refer to \cite{CLS} and \cite{Fulton}.

In Proposition~\ref{PropositionToricInterpretation} below, we show that the Chow ring of $\mathrm{M}$ coincides with $\CH(\underline{\Pi}_\mathrm{M})$
and that the augmented Chow ring of $\mathrm{M}$ coincides with $\CH(\Pi_\mathrm{M})$.

\begin{lemma}\label{PropositionIdentities}
The following identities hold in the augmented Chow ring $\mathrm{CH}(\mathrm{M})$.
\begin{enumerate}[(1)]\itemsep 5pt
\item For any element $i$ of $E$, we have  $y_i^2=0$.
\item For any two bases $I_1$ and $I_2$ of a flat $F$ of $\mathrm{M}$, we have $\prod_{i \in I_1} y_i = \prod_{i\in I_2} y_i$.
\item For any dependent set $J$ of $\mathrm{M}$, we have
$\prod_{j \in J} y_j=0$.
\end{enumerate}
\end{lemma}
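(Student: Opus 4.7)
The plan is to derive all three identities directly from the two defining families of relations in $\mathrm{CH}(\mathrm{M})$: the linear relations coming from $I_\mathrm{M}$, which yield $y_i = \sum_{F :\, i \notin F} x_F$ in the quotient, and the quadratic relations coming from $J_\mathrm{M}$, in particular $y_i\hspace{0.5mm}x_F = 0$ whenever $i \notin F$. Part (1) is immediate: substituting one factor gives $y_i^2 = \sum_{F :\, i \notin F} y_i\hspace{0.5mm}x_F$, and each term vanishes by the quadratic relation.

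For part (3), I would first reduce to the case in which $J$ is itself a circuit, since any dependent set contains a circuit $C$ and $\prod_{j \in J} y_j$ factors through $\prod_{c \in C} y_c$. Fixing a circuit $C = \{c_0, c_1, \ldots, c_{k-1}\}$, I would expand only the factor $y_{c_0}$ to obtain
\[
\prod_{c \in C} y_c \;=\; \sum_{F :\, c_0 \notin F} x_F \prod_{i=1}^{k-1} y_{c_i}.
\]
Iteratively applying $y_i\hspace{0.5mm}x_F = 0$ for $i \notin F$ shows that such a summand can be nonzero only when $F$ contains every element of $C \setminus c_0$. The defining property of a circuit forces $c_0 \in \text{cl}_\mathrm{M}(C \setminus c_0)$, so no proper flat can contain $C \setminus c_0$ without also containing $c_0$; the index set is empty and the product vanishes.

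For part (2), I would invoke the basis exchange axiom applied to the localization $\mathrm{M}^F$: any two bases of $F$ are connected by a sequence of single-element exchanges through bases of $F$. Thus it suffices to show that $y_a \prod_{i \in I} y_i = y_b \prod_{i \in I} y_i$ whenever $I \cup a$ and $I \cup b$ are both bases of $F$. Expanding $y_a$ via the linear relation and using the quadratic relations to force $I \subseteq G$ for every surviving flat $G$, the left-hand side becomes $\sum_G x_G \prod_{i \in I} y_i$ indexed by flats $G \supseteq I$ with $a \notin G$, and the right-hand side is the analogous sum with $b$ in place of $a$. These two index sets coincide: if a flat $G$ contains $I \cup b$, then $G \supseteq \text{cl}_\mathrm{M}(I \cup b) = F$, forcing $a \in G$, and the symmetric statement holds with $a$ and $b$ swapped.

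The main obstacle is the argument for (2); once the correct $y$-factor is expanded, parts (1) and (3) are essentially formal. The bookkeeping to take care of in (2) is the reduction to a single swap via basis exchange and the verification that, after enforcing $I \subseteq G$, the two surviving index sets really do agree.
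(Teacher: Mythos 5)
Your proposal is correct, and parts (1) and (2) follow essentially the same route as the paper: part (1) is the identical one-line computation, and part (2) uses the same reduction to a single basis exchange followed by matching the index sets of surviving flats. (You compare the sets $\{G \supseteq I,\ a \notin G\}$ and $\{G \supseteq I,\ b\notin G\}$ directly, while the paper first shows $\sum_{i_1 \in G} x_G \prod_{i\in I} y_i = \sum_{i_2\in G} x_G \prod_{i\in I} y_i$ using $\mathrm{cl}(I\cup a)=\mathrm{cl}(I\cup b)=F$ and then passes to complements; these are the same argument with different bookkeeping.) The one genuine divergence is part (3): the paper deduces it from parts (1) and (2), using (2) to rewrite $\prod_{j\in J\setminus j_1} y_j = \prod_{j\in J\setminus j_2} y_j$ for a circuit $J$ and then producing a factor $y_{j_1}^2=0$, whereas you expand a single $y_{c_0}$ and observe that every surviving flat would have to contain $C\setminus c_0$ and hence $c_0 \in \mathrm{cl}_{\mathrm{M}}(C\setminus c_0)$, so the index set is empty. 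Both arguments are valid; yours is slightly more self-contained (it does not route through (2)), while the paper's makes (3) a formal consequence of the first two identities. One small point worth making explicit in your write-up of (2): the connectivity of the basis-exchange graph (any two bases of $F$ are linked by single exchanges) is the standard strengthening of the exchange axiom that you are invoking, and the paper invokes it in the same implicit way.
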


\begin{proof}
The first identity is a straightforward consequence of the relations in $I_\mathrm{M}$ and $J_\mathrm{M}$:
\[
y_i^2=y_i \Big( \sum_{i \notin F} x_F\Big)=0. %\ \ \text{in $\mathrm{CH}(\mathrm{M})$.}
\]

For the second identity, 
we may assume that $I_1 \setminus I_2=\{i_1\}$ and $I_2 \setminus I_1 =\{i_2\}$,
by the basis exchange property of matroids.
Since a flat of $\mathrm{M}$ contains $I_1$ if and only if it contains $I_2$,
we have
\[
\Big(\sum_{i_1 \in G} x_G\Big)\prod_{i \in I_1 \cap I_2} y_i
=\Big(\sum_{I_1 \subseteq G} x_G\Big)\prod_{i \in I_1 \cap I_2} y_i
=
\Big(\sum_{I_2 \subseteq G} x_G\Big)\prod_{i \in I_1 \cap I_2} y_i
=\Big(\sum_{i_2 \in G} x_G\Big)\prod_{i \in I_1 \cap I_2} y_i.
\]
This immediately implies that we also have
$$\Big(\sum_{i_1 \notin G} x_G\Big)\prod_{i \in I_1 \cap I_2} y_i = \Big(\sum_{i_2 \notin G} x_G\Big)\prod_{i \in I_1 \cap I_2} y_i,$$
which tells us that
\[
\prod_{i \in I_1} y_i 
=
y_{i_1}\prod_{i \in I_1 \cap I_2} y_i
=
\Big(\sum_{i_1 \notin G} x_G\Big)\prod_{i \in I_1 \cap I_2} y_i
=
\Big(\sum_{i_2 \notin G} x_G\Big)\prod_{i \in I_1 \cap I_2} y_i
=
y_{i_2}\prod_{i \in I_1 \cap I_2} y_i
=
\prod_{i \in I_2} y_i. 
\]

For the third identity,
we may suppose that $J$ is a circuit, that is, a minimal dependent set.
Since $\mathrm{M}$ is a loopless matroid, we may choose distinct elements $j_1$ and $j_2$ from $J$.
Note that the independent sets $J \setminus j_1$ and $J \setminus j_2$ have the same closure because $J$ is a circuit.
Therefore, by the second identity, we have
\[
\prod_{j \in J \setminus j_1} y_j=
\prod_{j \in J \setminus j_2} y_j.
\]
Combining the above with the first identity, we get 
\[
\prod_{j \in J} y_j=
y_{j_1} \prod_{j \in J \setminus j_1} y_j=
y_{j_1} \prod_{j \in J \setminus j_2} y_j=
y_{j_1}^2 \prod_{j \in J \setminus \{j_1, j_2\}} y_j=0.\qedhere
\]
\end{proof}

%\begin{remark}
By the second identity in Lemma \ref{PropositionIdentities}, we may define
 \[
 y_F \coloneq  \prod_{i \in I} y_i \ \ \text{in $\mathrm{CH}(\mathrm{M})$}
 \]
for any flat $F$ of $\mathrm{M}$ and any basis $I$ of $F$.
The element $y_E$ will play the role of the fundamental class for the augmented Chow ring of $\mathrm{M}$.
%\end{remark}

\begin{proposition}\label{PropositionToricInterpretation}
We have isomorphisms $$\underline{\mathrm{CH}}(\mathrm{M})\cong \mathrm{CH}(\underline{\Pi}_\mathrm{M})
\and
\mathrm{CH}(\mathrm{M})\cong \mathrm{CH}(\Pi_\mathrm{M}).$$
\end{proposition}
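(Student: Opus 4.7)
The plan is to use the standard presentation of the Chow ring of the toric variety of a unimodular simplicial fan and check, in each case, that the resulting ideal of relations matches the one defining $\underline{\CH}(\mathrm{M})$ or $\CH(\mathrm{M})$.  Recall that for any unimodular simplicial fan $\Sigma$ with rays $\rho_1,\ldots,\rho_r$ and primitive generators $u_1,\ldots,u_r$, the ring $\CH(\Sigma)$ has a presentation
\[
\CH(\Sigma)\;\cong\;\mathbb{Q}[z_1,\ldots,z_r]\Big/\bigl(L_\Sigma + \mathrm{SR}_\Sigma\bigr),
\]
where $L_\Sigma$ is generated by the linear forms $\sum_i \langle m,u_i\rangle z_i$ for $m$ in the dual lattice, and $\mathrm{SR}_\Sigma$ is the Stanley--Reisner ideal generated by the squarefree monomials $\prod_{j} z_{\rho_j}$ indexed by minimal subsets of rays that do not span a cone of $\Sigma$.

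For the first isomorphism $\underline{\CH}(\mathrm{M})\cong\CH(\underline{\Pi}_\mathrm{M})$, I would simply invoke Feichtner--Yuzvinsky \cite{FY} (as noted already in the excerpt), so the only thing to prove is $\CH(\mathrm{M})\cong\CH(\Pi_\mathrm{M})$.  I would begin by reading off the data of $\Pi_\mathrm{M}$: its rays are $\rho_i$ (primitive generator $\mathbf{e}_i$) for $i\in E$ and $\rho_F$ (primitive generator $-\mathbf{e}_{E\setminus F}$) for proper flats $F$, so I identify the ring variables with $y_i$ and $x_F$.  Applying the dual basis vector $\mathbf{e}_j^*$ against the primitive generators yields the linear form $y_j-\sum_{j\notin F}x_F$, and as $j$ ranges over $E$ these span $L_{\Pi_\mathrm{M}}$ (the forms for $m=\mathbf{e}_j^*$ already give all relations up to the fact that the $\rho_F$ with $E\setminus F$ fixed are distinct rays).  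This matches exactly the generators of $I_\mathrm{M}$.

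Next I would enumerate the minimal non-cones of $\Pi_\mathrm{M}$ using the definition of compatible pairs $I\le\mathscr{F}$: they are precisely (a) pairs $\{\rho_{F_1},\rho_{F_2}\}$ with $F_1,F_2$ incomparable, (b) pairs $\{\rho_i,\rho_F\}$ with $i\notin F$, and (c) sets $\{\rho_i: i\in C\}$ with $C$ a circuit of $\mathrm{M}$.  Relations (a) and (b) exactly produce the generators of $J_\mathrm{M}$, so the ideal $I_\mathrm{M}+J_\mathrm{M}$ is contained in $L_{\Pi_\mathrm{M}}+\mathrm{SR}_{\Pi_\mathrm{M}}$ and I get a surjection $\CH(\mathrm{M})\twoheadrightarrow\CH(\Pi_\mathrm{M})$.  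The remaining step, which is the main content, is the reverse inclusion: I must check that the circuit monomials $\prod_{j\in C}y_j$ in (c) already lie in $I_\mathrm{M}+J_\mathrm{M}$.  This is exactly what is supplied by Lemma~\ref{PropositionIdentities}(3), whose proof uses (1) and (2) of the same lemma, and whose derivation in turn uses only relations in $I_\mathrm{M}+J_\mathrm{M}$.  Combining, the defining ideals agree, proving the desired isomorphism.

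The main obstacle is the apparent asymmetry between the two presentations: $J_\mathrm{M}$ is generated only in degree two, whereas $\mathrm{SR}_{\Pi_\mathrm{M}}$ contains higher-degree circuit monomials.  Everything turns on the non-obvious fact that these circuit monomials are forced to vanish by the degree-one relations $y_i=\sum_{i\notin F}x_F$ together with the quadratic Stanley--Reisner relations; this is precisely Lemma~\ref{PropositionIdentities}, which is why the lemma is established just before the proposition.  Once that is in hand, the rest is a direct translation between the fan description and the presentation given in the introduction.
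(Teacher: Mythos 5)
Your proof is correct and follows essentially the same route as the paper: both identify $\mathrm{CH}(\Pi_\mathrm{M})$ with the Stanley--Reisner presentation of $\Delta_\mathrm{M}$ modulo linear forms, observe that this differs from the presentation of $\mathrm{CH}(\mathrm{M})$ only by the dependent-set (circuit) monomials in the $y_i$, and then invoke Lemma~\ref{PropositionIdentities}(3) to show those monomials already vanish in $\mathrm{CH}(\mathrm{M})$. Your explicit enumeration of the minimal non-faces and the matching of the linear forms $y_j-\sum_{j\notin F}x_F$ with the generators of $I_\mathrm{M}$ is just a slightly more detailed unpacking of what the paper leaves implicit.
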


%The above proposition shows that $\underline{\mathrm{CH}}^k(\mathrm{M})$ is zero when $k \ge d$ and % \ \ \text{for $k \ge d$} \ \   $\mathrm{CH}^k(\mathrm{M})$ is zero when $k>d$.%\ \ \text{for $k > d$.}

\begin{proof}
The first isomorphism is proved in \cite[Theorem 3]{FY}; see also \cite[Section 5.3]{AHK}.

Let $K_\mathrm{M}$ be the ideal of $S_\mathrm{M}$ generated by the monomials 
$\prod_{j \in J} y_j$ for every dependent set $J$ of $\mathrm{M}$.
The ring of continuous piecewise polynomial functions on $\Pi_\mathrm{M}$ is isomorphic to the Stanley--Reisner
ring of $\Delta_{\mathrm{M}}$, which is equal to $$S_\mathrm{M} / (J_\mathrm{M} + K_\mathrm{M}).$$
The ring $\mathrm{CH}(\Pi_\mathrm{M})$ is obtained from this ring by killing the linear forms that generate the ideal $I_\mathrm{M}$.
In other words, we have a surjective homomorphism
$$\mathrm{CH}(\mathrm{M}) \coloneq S_\mathrm{M} / (I_\mathrm{M} + J_\mathrm{M})
\longrightarrow
S_\mathrm{M} / (I_\mathrm{M} + J_\mathrm{M}  + K_\mathrm{M})
\cong \mathrm{CH}(\Pi_\mathrm{M}).$$
The fact that this is an isomorphism follows from the third part of Lemma \ref{PropositionIdentities}.
\end{proof}

\begin{remark}
%Let $\mathrm{B}$ be the rank $d$ Boolean matroid on $E$.
By Proposition \ref{PropositionToricInterpretation},
the graded dimension of the Chow ring of the rank $d$ Boolean matroid  $\underline{\CH}(\mathrm{B})$
is given by the $h$-vector of the permutohedron in $\mathbb{R}^E$.
In other words, we have
\[
\dim \underline{\CH}^k(\mathrm{B})=\text{the Eulerian number $\eulerian{d}{k}$}.
\]
See \cite[Section 9.1]{Petersen} for more on permutohedra and Eulerian numbers.
\end{remark}

% Although the toric variety of $\underline{\Pi}_\mathrm{M}$ is not compact unless $|E| = d$, the balanced weight
If $E$ is nonempty, we have the balanced weight
\[
1 \in \mathrm{MW}_{d-1}(\underline{\Pi}_\mathrm{M}) \cong \text{Hom}_\mathbb{Q}(\underline{\mathrm{CH}}^{d-1}(\mathrm{M}),\mathbb{Q}),
\]
which can be used to define a degree map on the Chow ring of $\mathrm{M}$.
Similarly, for any $E$, %the balanced weight
\[
1 \in \mathrm{MW}_{d}(\Pi_\mathrm{M}) \cong \text{Hom}_\mathbb{Q}(\mathrm{CH}^{d}(\mathrm{M}),\mathbb{Q})
\]
can be used to define a degree map on the augmented Chow ring of $\mathrm{M}$.

\begin{definition}\label{DefinitionDegreemap}
Consider the following \emph{degree maps} for the Chow ring and the augmented Chow ring of $\mathrm{M}$.
\begin{enumerate}[(1)]\itemsep 5pt
\item If $E$ is nonempty, the degree map for $\underline{\mathrm{CH}}(\mathrm{M})$ is the linear map
\[
\underline{\deg}_\mathrm{M}:  \underline{\mathrm{CH}}^{d-1}(\mathrm{M}) \longrightarrow \mathbb{Q}, \quad x_{\mathscr{F}} \longmapsto 1,
\]
where $x_\mathscr{F}$ is any monomial corresponding to a maximal cone $\underline{\sigma}_\mathscr{F}$ of $\underline{\Pi}_\mathrm{M}$.
\item For any $E$, the degree map for $\mathrm{CH}(\mathrm{M})$ is the linear map
\[
\deg_\mathrm{M}:  \mathrm{CH}^{d}(\mathrm{M}) \longrightarrow \mathbb{Q}, \quad x_{I \le \mathscr{F}} \longmapsto 1,
\]
where $x_{I\le \mathscr{F}}$ is any monomial corresponding to a maximal cone $\sigma_{I\le \mathscr{F}}$ of $\Pi_\mathrm{M}$.
\end{enumerate}
\end{definition}

By Proposition \ref{PropositionUniqueBalancing}, the degree maps are well-defined and are isomorphisms.
It follows that, for any two maximal cones $\underline{\sigma}_{\mathscr{F}_1}$ and $\underline{\sigma}_{\mathscr{F}_2}$ of the Bergman fan of $\mathrm{M}$, 
\[
x_{\mathscr{F}_1}=x_{\mathscr{F}_2} \ \ \text{in $\underline{\mathrm{CH}}^{d-1}(\mathrm{M})$}.
\]
Similarly, for any two maximal cones $\sigma_{I_1 \le \mathscr{F}_1}$ and $\sigma_{I_2 \le \mathscr{F}_2}$ of the augmented Bergman fan of $\mathrm{M}$,
\[
y_{F_1} x_{\mathscr{F}_1}=y_{F_2}x_{\mathscr{F}_2} \ \ \text{in $\mathrm{CH}^d(\mathrm{M})$},
\]
where $F_1$ is the closure of $I_1$ in $\mathrm{M}$ and $F_2$ is the closure of $I_2$ in $\mathrm{M}$.
Proposition \ref{PropositionToricInterpretation} shows that
\[
\underline{\mathrm{CH}}^k(\mathrm{M})=0 \ \ \text{for $k \ge d$} \ \  
\text{and} \ \ 
\mathrm{CH}^k(\mathrm{M})=0 \ \ \text{for $k > d$.}
\]
%by Proposition \ref{PropositionToricInterpretation}.

\begin{remark}\label{remark:wonderful}
Let $\mathbb{F}$ be a field,
and let $V$ be a $d$-dimensional linear subspace of $\mathbb{F}^E$.
We suppose that
the subspace $V$ is not contained in $\mathbb{F}^S \subseteq\mathbb{F}^E$
 for any proper subset $S$ of $E$.
Let $\mathrm{B}$ be the Boolean matroid on $E$,
and let  $\mathrm{M}$ be the loopless matroid on $E$ defined by
\[
\text{$S$ is an independent set of $\mathrm{M}$} \Longleftrightarrow
\text{the restriction to $V$ of the projection $\mathbb{F}^E \to \mathbb{F}^S$  is surjective}. %\ \ \text{for all $S \subseteq E$.}
\]
Let $\mathbb{P}(\mathbb{F}^E)$ be the projective space of lines in $\mathbb{F}^E$, 
and let $\underline{\mathbb{T}}_E$ be its open torus. %for the open torus of $\mathbb{P}(\mathbb{F}^E)$.
%and $\mathbb{P}(V)$ for the projective subspace of  $\mathbb{P}(\mathbb{F}^E)$ corresponding to $V$.
For any  proper flat $F$ of $\mathrm{M}$, we write  $H_F$ for the projective subspace
\[
H_F \coloneq \big\{ p\in \mathbb{P}(V)\mid \text{$p_i = 0$ for all $i\in F$} \big\}.
\]  
The  \emph{wonderful variety} $\underline{X}_V$ is obtained from $\mathbb{P}(V)$ by first blowing up $H_F$ for every corank $1$ flat $F$, 
then blowing up the strict transforms of $H_F$ for every corank $2$ flat $F$, and so on.
Equivalently, 
\begin{align*}
\underline{X}_V&=\text{the closure of $\mathbb{P}(V) \cap \underline{\mathbb{T}}_E$ in the toric variety $\underline{X}_\mathrm{M}$ defined by $\underline{\Pi}_\mathrm{M}$}\\
&=\text{the closure of $\mathbb{P}(V) \cap \underline{\mathbb{T}}_E$ in the toric variety $\underline{X}_\mathrm{B}$ defined by $\underline{\Pi}_\mathrm{B}$}.
\end{align*}
%$\underline{X}_V$ is the closure of $\mathbb{P}(V) \cap \underline{\mathbb{T}}_E$ in the permutohedral toric variety $\underline{X}_\mathrm{B}$ defined by $\underline{\Pi}_\mathrm{B}$ 
%See \cite[Section 6]{Huh} for a detailed discussion.
%The Chow ring of  $\underline{X}_V$  is isomorphic to $\underline{\CH}(\mathrm{M})$ 
When $E$ is nonempty, the inclusion $\underline{X}_V \subseteq \underline{X}_\mathrm{M}$ induces an isomorphism between their Chow rings,\footnote{In general, the inclusion $\underline{X}_V \subseteq \underline{X}_\mathrm{M}$ does not induce an isomorphism between their singular cohomology rings.}
and hence  the Chow ring of $\underline{X}_V$ is isomorphic to $\underline{\CH}(\mathrm{M})$ \cite[Corollary 2]{FY}.

Let $\mathbb{P}(\mathbb{F}^E \oplus\mathbb{F})$ be the projective completion of $\mathbb{F}^E$,
and let $\mathbb{T}_E$ be its  open torus. %of $\mathbb{P}(V\oplus\mathbb{F})$.
The projective completion  $\mathbb{P}(V \oplus\mathbb{F})$  contains
a copy of $\mathbb{P}(V)$
as the hyperplane at infinity, and it therefore contains a copy of $H_F$ for every nonempty proper flat $F$.
The \emph{augmented wonderful variety} $X_V$ is obtained from
$\mathbb{P}(V\oplus\mathbb{F}^1)$
by first blowing up $H_F$ for every corank $1$ flat $F$, 
then blowing up the strict transforms of $H_F$ for every corank $2$ flat $F$, and so on.
Equivalently,
\begin{align*}
X_V&=\text{the closure of $\mathbb{P}(V \oplus \mathbb{F}) \cap \mathbb{T}_E$ in the toric variety $X_\mathrm{M}$ defined by $\Pi_\mathrm{M}$}\\
&=\text{the closure of $\mathbb{P}(V \oplus \mathbb{F}) \cap \mathbb{T}_E$ in the toric variety $X_\mathrm{B}$ defined by $\Pi_\mathrm{B}$}.
\end{align*}
The inclusion $X_V \subseteq X_\mathrm{M}$ induces an isomorphism between their Chow rings, 
and hence the Chow ring of $X_V$ is isomorphic to $\CH(\mathrm{M})$.\footnote{This can be proved using the interpretation of $\mathrm{CH}(\mathrm{M})$ in 
the last sentence of Remark \ref{RemarkAlternative}.}
\end{remark}

\subsection{The graded M\"obius algebra}\label{sec:mobius}

For any nonnegative integer $k$, we define a vector space
%Let $k$ be a nonnegative integer, and let $\mathrm{H}^k(\mathrm{M})$ be the vector space
\[
\mathrm{H}^k(\mathrm{M})\coloneq \bigoplus_{F\in \mathscr{L}^k(\mathrm{M})} \mathbb{Q} \hspace{0.5mm} y_F,
\]
where the direct sum is over the set $\mathscr{L}^k(\mathrm{M})$ of rank $k$ flats of $\mathrm{M}$.

\begin{definition}\label{DefinitionMobiusAlgebra}
The \emph{graded M\"obius algebra} of $\mathrm{M}$
is the graded vector space
\[
\mathrm{H}(\mathrm{M} )\coloneq \bigoplus_{k \ge 0} \mathrm{H}^k(\mathrm{M}).
\]
The multiplication in $\mathrm{H}(\mathrm{M})$ is defined by the rule
\[
y_{F_1}y_{F_2}=\begin{cases} y_{F_1 \lor F_2}& \text{if $\text{rk}_\mathrm{M}(F_1)+\text{rk}_\mathrm{M}(F_2) = \text{rk}_\mathrm{M}(F_1 \lor F_2)$,} \\
\hfil 0 & \text{if $\text{rk}_\mathrm{M}(F_1)+\text{rk}_\mathrm{M}(F_2) > \text{rk}_\mathrm{M}(F_1 \lor F_2)$,} \end{cases}
\]
where $\lor$ stands for the join operation in the lattice of flats $\mathscr{L}(\mathrm{M})$ of $\mathrm{M}$.
\end{definition}

Our double use of the symbol $y_F$ is justified by the following proposition.

\begin{proposition}\label{PropositionMobiusAlgebra}
The graded linear map
\[
\mathrm{H}(\mathrm{M} ) \longrightarrow \mathrm{CH}(\mathrm{M}), \quad y_F \longmapsto y_F
\]
is an injective homomorphism of graded algebras.
\end{proposition}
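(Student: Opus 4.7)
The plan is to verify multiplicativity and injectivity separately, both reducing to elementary matroid bookkeeping together with the relations collected in Lemma~\ref{PropositionIdentities}. For multiplicativity, fix flats $F_1, F_2$ of $\mathrm{M}$ with bases $I_1, I_2$, so that by definition $y_{F_j} = \prod_{i \in I_j} y_i$ in $\mathrm{CH}(\mathrm{M})$. Submodularity gives $\text{rk}_\mathrm{M}(F_1) + \text{rk}_\mathrm{M}(F_2) \ge \text{rk}_\mathrm{M}(F_1 \vee F_2) + \text{rk}_\mathrm{M}(F_1 \cap F_2)$, and since $\mathrm{M}$ is loopless, $\text{rk}_\mathrm{M}(F_1 \cap F_2) = 0$ if and only if $F_1 \cap F_2 = \varnothing$. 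When the ranks add, $I_1$ and $I_2$ are disjoint and $I_1 \cup I_2 \subseteq F_1 \vee F_2$ has cardinality $\text{rk}_\mathrm{M}(F_1 \vee F_2)$, hence is a basis of $F_1 \vee F_2$; this yields $y_{F_1} y_{F_2} = y_{F_1 \vee F_2}$. When the ranks are strictly subadditive, either $I_1 \cap I_2 \ne \varnothing$, and the product vanishes by $y_i^2 = 0$, or $I_1 \cup I_2$ is dependent (its cardinality exceeds $\text{rk}_\mathrm{M}(\text{cl}_\mathrm{M}(I_1 \cup I_2))$), and Lemma~\ref{PropositionIdentities}(3) gives $y_{F_1} y_{F_2} = 0$.

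For injectivity it suffices to show that $\{y_F : F \in \mathscr{L}^k(\mathrm{M})\}$ is linearly independent in $\mathrm{CH}^k(\mathrm{M})$ for every $k$. The case $F = E$ occurs only when $k = d$: here $\mathrm{H}^d(\mathrm{M}) = \mathbb{Q}\, y_E$ is one-dimensional and $y_E$ is the monomial of a maximal cone $\sigma_I$ of $\Pi_\mathrm{M}$ for any basis $I$ of $E$, so $\deg_\mathrm{M}(y_E) = 1$ by Definition~\ref{DefinitionDegreemap}. For $k < d$ and a proper flat $F$ of rank $k$, I will construct a dual class $\xi_F \in \mathrm{CH}^{d-k}(\mathrm{M})$ so that the pairing matrix $\bigl( \deg_\mathrm{M}(y_{F'} \, \xi_F) \bigr)$ indexed by rank-$k$ flats is the identity. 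Fix any complete flag $\mathscr{F}_F = \{F_0 \subsetneq F_1 \subsetneq \cdots \subsetneq F_{d-k-1}\}$ of proper flats of $\mathrm{M}$ with $F_0 = F$, and set $\xi_F \coloneq x_{\mathscr{F}_F}$. For any basis $I$ of $F$, the compatibility $I \subseteq F \subseteq F_j$ shows that $y_F \, \xi_F$ is the monomial $x_{I \le \mathscr{F}_F}$ of the maximal cone $\sigma_{I \le \mathscr{F}_F}$ of $\Pi_\mathrm{M}$, hence $\deg_\mathrm{M}(y_F \, \xi_F) = 1$. For any rank-$k$ flat $F' \ne F$, equality of ranks forces $F' \not\subseteq F$, so any basis $I'$ of $F'$ must contain some $i' \in E \setminus F$ (else $\text{cl}_\mathrm{M}(I') = F' \subseteq F$); the relation $y_{i'} x_F = 0$ then annihilates $y_{F'} \, \xi_F$, giving $\deg_\mathrm{M}(y_{F'} \, \xi_F) = 0$.

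I expect no serious obstacle. The multiplicative step is a direct application of Lemma~\ref{PropositionIdentities}, and injectivity reduces to the elementary observation that two distinct flats of equal rank are incomparable, combined with the degree map being well-defined on monomials of maximal cones (Propositions~\ref{PropositionToricInterpretation} and~\ref{PropositionUniqueBalancing}). The one point to check carefully is that the product $y_F \, \xi_F$, as a monomial in the Stanley--Reisner presentation of $\mathrm{CH}(\Pi_\mathrm{M})$, really does correspond to the maximal cone $\sigma_{I \le \mathscr{F}_F}$ of the augmented Bergman fan, so that the degree computation applies.
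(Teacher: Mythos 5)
Your proposal is correct and follows essentially the same route as the paper: multiplicativity is verified exactly as in the paper via Lemma \ref{PropositionIdentities} by splitting into the rank-additive and rank-subadditive cases, and injectivity is established by pairing each $y_F$ against the monomial of a saturated flag of proper flats starting at $F$ and using the degree map. The only differences are presentational — you phrase the injectivity step as a diagonal pairing matrix and spell out why $y_{F'}x_{\mathscr{F}_F}=0$ for $F'\neq F$ (via $y_{i'}x_F=0$), a point the paper leaves implicit.
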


\begin{proof}
We first show that the linear map is injective.
It is enough to check that the subset
\[
\{y_F\}_{F \in \mathscr{L}^k(\mathrm{M})} \subseteq \mathrm{CH}^k(\mathrm{M})
\]
is linearly independent for every nonnegative integer $k<d$.
Suppose that
\[
\sum_{F \in \mathscr{L}^k(\mathrm{M})}  c_F y_F=0 \ \ \text{for some $c_F \in \mathbb{Q}$}.
\]
For any given rank $k$ flat $G$, we choose a saturated flag of proper flats $\mathscr{G}$ whose smallest member is $G$
and observe that
\[
c_G y_G x_\mathscr{G} = \Big(\sum_{F \in \mathscr{L}^k(\mathrm{M})}  c_F y_F\Big) x_\mathscr{G} =0.
\]
Since the degree of  $y_Gx_\mathscr{G} $ is $1$, this implies that $c_G$ must be zero.

We next check that the linear map is an algebra homomorphism using Lemma \ref{PropositionIdentities}.
Let $I_1$  be a basis of a flat $F_1$, and let $I_2$ be a basis of a flat $F_2$.
If the rank of $F_1 \lor F_2$ is the sum of the ranks of $F_1$ and $F_2$,
then $I_1$ and $I_2$ are disjoint and their union is a basis of $F_1 \lor F_2$.
Therefore, in the augmented Chow ring of $\mathrm{M}$,  
\[
y_{F_1}y_{F_2}=\prod_{i \in I_1} y_i \prod_{i \in I_2} y_i =\prod_{i \in I_1 \cup I_2} y_i=y_{F_1 \lor F_2}.
\]
If the rank of $F_1 \lor F_2$ is less than the sum of the ranks of $F_1$ and $F_2$,
then either $I_1$ and $I_2$ intersect or the union of $I_1$ and $I_2$ is dependent in $\mathrm{M}$.
Therefore, in the augmented Chow ring of $\mathrm{M}$,  
\[
y_{F_1}y_{F_2}=\prod_{i \in I_1} y_i \prod_{i \in I_2} y_i =0.
\qedhere
\]
\end{proof}

\begin{remark}
Consider the torus $\mathbb{T}_E$, the  toric variety $X_\mathrm{B}$, and 
the augmented wonderful variety $X_V$ in Remark \ref{remark:wonderful}.
The identity of $\mathbb{T}_E$ uniquely extends to a toric map 
\[
\mathrm{p}_\mathrm{B}: X_\mathrm{B} \longrightarrow (\mathbb{P}^1)^E.
\]
Let $\mathrm{p}_V$ be the restriction of $\mathrm{p}_\mathrm{B}$ to the augmented wonderful variety $X_V$.
If we identify  the Chow ring of $X_V$ with $\CH(\mathrm{M})$  as in Remark \ref{remark:wonderful},
 the image of the pullback $\mathrm{p}_V^*$ is
  the graded M\"obius algebra $\mathrm{H}(\mathrm{M}) \subseteq \CH(\mathrm{M})$.
  \end{remark}

\subsection{Pullback and pushforward maps}\label{sec:gysin}

Let $\Sigma$ be a unimodular fan, and let $\sigma$ be a $k$-dimensional cone in $\Sigma$.
The torus orbit closure in the smooth toric variety $X_\Sigma$ corresponding to $\sigma$ can be identified with
the toric variety of the fan $\text{star}_\sigma \Sigma$. %can be identified with a torus orbit closure of codimension $k$ in the smooth toric variety $X_\Sigma$.
Its class  in the Chow ring of $X_\Sigma$ is the monomial $x_\sigma$, which is 
the product of the divisor classes $x_\rho$ corresponding to the rays $\rho$ in $\sigma$.
The inclusion $\iota$ of the torus orbit closure in $X_\Sigma$ defines the \emph{pullback}  $\iota^*$ and the \emph{pushforward} $\iota_*$ between the Chow rings,
 whose composition
is multiplication by the monomial $x_\sigma$:
\[
\xymatrix{
\mathrm{CH}(\Sigma)  \ar[rr]^{x_\sigma} \ar[dr]_{\iota^*}&&\mathrm{CH}(\Sigma) 
\\
& \mathrm{CH}(\text{star}_\sigma\Sigma) \ar[ur]_{\iota_*}&
}
\]
The pullback $\iota^*$ is a surjective graded algebra homomorphism, while
the pushforward $\iota_*$ is a degree $k$ homomorphism of $\mathrm{CH}(\Sigma)$-modules.

We give an explicit description of the pullback $\iota^*$ and the pushforward $\iota_*$ when $\Sigma$ is the augmented Bergman fan $\Pi_\mathrm{M}$ and $\sigma$ is the ray $\rho_F$ of a proper flat $F$ of $\mathrm{M}$.
Recall from Proposition \ref{PropositionStars}  that the star of $\rho_F$ admits the decomposition
%When $\Sigma$ is the augmented Bergman fan of $\mathrm{M}$ and $\sigma$ is the ray $\rho_F$, we have
\[
 \text{star}_{\rho_F} \Pi_\mathrm{M} \cong \underline{\Pi}_{\mathrm{M}_{F}}\times \Pi_{\mathrm{M}^F}.
\]
Thus we may identify the Chow ring of the star of $\rho_F$ with $ \underline{\mathrm{CH}}(\mathrm{M}_F)\otimes \mathrm{CH}(\mathrm{M}^F)$.
We denote the  pullback to the tensor product by $\varphi_\mathrm{M}^F$ and the pushforward  from the tensor product by $\psi_\mathrm{M}^F$:
\[
\xymatrix{
\mathrm{CH}(\mathrm{M})  \ar[rr]^{x_F} \ar[dr]_{\varphi^F_\mathrm{M}}&&\mathrm{CH}(\mathrm{M}) 
\\
& \underline{\mathrm{CH}}(\mathrm{M}_F)\otimes \mathrm{CH}(\mathrm{M}^F)  \ar[ur]_{\psi^F_\mathrm{M}}&
}
\]
%The pullback $\varphi^F_\mathrm{M}$ equips $\underline{\mathrm{CH}}(\mathrm{M}_F)  \otimes \mathrm{CH}(\mathrm{M}^F)$ with the structure of 
%a $\CH(\mathrm{M})$-module,
%and the pushforward $\psi^F_\mathrm{M}$ is a homomorphism of $\CH(\mathrm{M})$-modules.
To  describe the pullback and the pushforward, we introduce Chow classes $\alpha_{\mathrm{M}}$, $\underline{\alpha}_{\mathrm{M}}$, and $\underline{\beta}_{\mathrm{M}}$. %$\alpha \in \mathrm{CH}^1(\mathrm{M})$ and  $\underline{\alpha} \in \underline{\mathrm{CH}}^1(\mathrm{M})$ and  $ \underline{\beta} \in \underline{\mathrm{CH}}^1(\mathrm{M})$.
%Let $i$ be any element of $E$. 
They are defined as the sums
\[
\alpha_{\mathrm{M}} \coloneq \sum_G x_G \in \mathrm{CH}^1(\mathrm{M}),
\]
where the sum is over all proper flats $G$ of $\mathrm{M}$;
\[
\underline{\alpha}_{\mathrm{M}}\coloneq  \sum_{i \in G} x_G \in \underline{\mathrm{CH}}^1(\mathrm{M}),
\]
where the sum is over all nonempty proper flats $G$ of $\mathrm{M}$ containing a given element $i$ in $E$; and
\[
\underline{\beta}_{\mathrm{M}} \coloneq \sum_{i \notin G} x_G  \in \underline{\mathrm{CH}}^1(\mathrm{M}),
\]
where the sum is over all nonempty proper flats $G$ of $\mathrm{M}$ not containing a given element $i$ in $E$.
The linear relations defining $\underline{\mathrm{CH}}(\mathrm{M})$ show that  $\underline{\alpha}_{\mathrm{M}}$ and $\underline{\beta}_{\mathrm{M}}$ do not depend on the choice of $i$. 

The following two propositions are straightforward.

\begin{proposition}\label{DefinitionXPullback}
The pullback 
%\[
$
\varphi^F_\mathrm{M}%:\mathrm{CH}(\mathrm{M}) \longrightarrow  \underline{\mathrm{CH}}(\mathrm{M}_F) \otimes \mathrm{CH}(\mathrm{M}^F) 
$
%\]
is the unique graded algebra homomorphism 
\[
\mathrm{CH}(\mathrm{M}) \longrightarrow  \underline{\mathrm{CH}}(\mathrm{M}_F) \otimes \mathrm{CH}(\mathrm{M}^F) 
\]
%from $\mathrm{CH}(\mathrm{M})$ to $\underline{\mathrm{CH}}(\mathrm{M}_F) \otimes \mathrm{CH}(\mathrm{M}^F) $
 that satisfies the following properties:
\begin{enumerate}[$\bullet$]\itemsep 5pt
\item If $G$ is a proper flat of $\mathrm{M}$ incomparable to $F$, then $\varphi^F_\mathrm{M}(x_G)=0$.
\item If $G$ is a proper flat of $\mathrm{M}$ properly contained in $F$, then $\varphi^F_\mathrm{M}(x_G)=1 \otimes x_G$.
\item If $G$ is a proper flat of $\mathrm{M}$  properly containing $F$, then $\varphi^F_\mathrm{M}(x_G)=x_{G \setminus F} \otimes 1$.
\item If $i$ is an element of $F$, then $\varphi^F_\mathrm{M}(y_i)=1 \otimes y_i$.
\item If $i$ is an element of $E \setminus F$, then $\varphi^F_\mathrm{M}(y_i)=0$.
 \end{enumerate}
 The above five properties imply the following additional properties of $\varphi^F_\mathrm{M}$:
 \begin{enumerate}[$\bullet$]\itemsep 5pt
\item The equality $\varphi^F_\mathrm{M}(x_F)=-1\otimes\alpha_{\mathrm{M}^F}-\underline{\beta}_{\mathrm{M}_F} \otimes 1$ holds.
\item The equality $\varphi^F_\mathrm{M}(\alpha_\mathrm{M})=\underline{\alpha}_{\mathrm{M}_F} \otimes 1$ holds.
\end{enumerate}
\end{proposition}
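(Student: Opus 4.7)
The plan is to exploit the toric-geometric definition of $\varphi^F_\mathrm{M}$ as the pullback along the inclusion of the torus orbit closure corresponding to $\rho_F$, together with the identification $\text{star}_{\rho_F}\Pi_\mathrm{M} \cong \underline{\Pi}_{\mathrm{M}_F} \times \Pi_{\mathrm{M}^F}$ from Proposition \ref{PropositionStars}(\ref{PropositionStars2}). On a smooth toric variety, the pullback of a torus-invariant divisor class $x_\rho$ along such an inclusion equals the class of the corresponding ray of the star when $\rho_F + \rho$ is a cone of $\Pi_\mathrm{M}$, and vanishes otherwise. The first step is to apply this rule to each generator. For a ray $\rho_G$ indexed by a proper flat $G \neq F$, the pair $\{F, G\}$ forms a chain iff $G$ is comparable to $F$, which splits into three cases. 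When $G$ is incomparable to $F$, the pullback vanishes. When $G \subsetneq F$, the flat $G$ is itself a proper flat of $\mathrm{M}^F$ and $\rho_G$ lies in the $\Pi_{\mathrm{M}^F}$ factor under $\iota_F$, giving $1 \otimes x_G$. When $G \supsetneq F$, the set $G \setminus F$ is a nonempty proper flat of $\mathrm{M}_F$ and $\rho_G$ corresponds to $\underline{\rho}_{G \setminus F}$ in the $\underline{\Pi}_{\mathrm{M}_F}$ factor, giving $x_{G \setminus F} \otimes 1$. For a ray $\rho_i$ with $i \in E$, the singleton $\{i\}$ is compatible with $\{F\}$ iff $i \in F$, and in that case $i$ is an element of the ground set of $\mathrm{M}^F$ and $\rho_i$ sits in the $\Pi_{\mathrm{M}^F}$ factor, giving $1 \otimes y_i$; otherwise the pullback is $0$. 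Uniqueness is automatic since the elements $x_G$ and $y_i$ generate $\mathrm{CH}(\mathrm{M})$ as a $\mathbb{Q}$-algebra.

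For the two additional formulas, the strategy is to pick some $i \in E \setminus F$ (available because $F$ is proper) and apply $\varphi^F_\mathrm{M}$ to the defining linear relation $y_i = \sum_{i \notin G} x_G$. The left-hand side maps to $0$, and on the right only $x_F$ has not yet been computed. Of the remaining terms, incomparable $G$'s pull back to $0$; every $G \subsetneq F$ automatically satisfies $i \notin G$ and contributes $1 \otimes x_G$, assembling into $1 \otimes \alpha_{\mathrm{M}^F}$; and terms with $G \supsetneq F$ and $i \notin G$ contribute $x_{G \setminus F} \otimes 1$, assembling into $\underline{\beta}_{\mathrm{M}_F} \otimes 1$. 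Solving for $\varphi^F_\mathrm{M}(x_F)$ yields the first additional formula. For $\varphi^F_\mathrm{M}(\alpha_\mathrm{M}) = \sum_G \varphi^F_\mathrm{M}(x_G)$, summing the five case contributions causes the $\pm 1 \otimes \alpha_{\mathrm{M}^F}$ terms to cancel, and the identity $\sum_{G \supsetneq F} x_{G \setminus F} = \underline{\alpha}_{\mathrm{M}_F} + \underline{\beta}_{\mathrm{M}_F}$, obtained by splitting nonempty proper flats of $\mathrm{M}_F$ according to whether they contain the chosen $i$, offsets the $-\underline{\beta}_{\mathrm{M}_F}$ contribution, leaving $\underline{\alpha}_{\mathrm{M}_F} \otimes 1$.

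The main obstacle is purely one of bookkeeping: one must identify correctly, for each generator, which tensor factor of $\underline{\mathrm{CH}}(\mathrm{M}_F) \otimes \mathrm{CH}(\mathrm{M}^F)$ absorbs it, and sort the various sums over proper flats $G$ by their relationship to $F$ when deriving the $\alpha$ and $\beta$ formulas. No deep new ingredient is needed beyond the star decomposition of Proposition \ref{PropositionStars} and the explicit form of the linear relations in $\mathrm{CH}(\mathrm{M})$.
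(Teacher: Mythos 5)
Your proof is correct, and it is exactly the verification the paper has in mind when it declares this proposition ``straightforward'' after setting up the star decomposition of Proposition \ref{PropositionStars}: compute the restriction of each ray class $x_{\rho_G}$ ($G\neq F$) and $x_{\rho_i}$ under the identification $\text{star}_{\rho_F}\Pi_\mathrm{M}\cong\underline{\Pi}_{\mathrm{M}_F}\times\Pi_{\mathrm{M}^F}$, then pin down $\varphi^F_\mathrm{M}(x_F)$ (and hence uniqueness) from the linear relation $y_i=\sum_{i\notin G}x_G$ for a chosen $i\in E\setminus F$. Your handling of $x_F$ separately from the other rays is the right move, and the bookkeeping for the $\alpha$ and $\beta$ identities checks out.
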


\begin{proposition}\label{DefinitionXPushforward}
The pushforward $\psi^F_\mathrm{M}$ is the unique $\CH(\mathrm{M})$-module homomorphism\footnote{We make $\psi^F_{\mathrm{M}}$ into a $\mathrm{CH}(\mathrm{M})$-module homomorphism via the pullback $\varphi^F_{\mathrm{M}}$.}
%from $\underline{\mathrm{CH}}(\mathrm{M}_F)  \otimes \mathrm{CH}(\mathrm{M}^F)$ to $\mathrm{CH}(\mathrm{M})$
\[
 \psi^F_\mathrm{M}: \underline{\mathrm{CH}}(\mathrm{M}_F)  \otimes \mathrm{CH}(\mathrm{M}^F) \longrightarrow \mathrm{CH}(\mathrm{M})
% \quad 
% \prod_{F'} x_{F' \setminus F}  \otimes \prod_{F''} x_{F''}  \longmapsto x_F \prod_{F'} x_{F'} \prod_{F''} x_{F''}.
 \]
that satisfies,
for any collection $\mathscr{S}'$ of proper flats of $\mathrm{M}$ strictly containing $F$
and any collection $\mathscr{S}''$ of proper flats of $\mathrm{M}$ strictly contained in $F$,
\[
\psi^F_\mathrm{M}\Bigg( \prod_{F' \in \mathscr{S}'} x_{F' \setminus F}  \otimes \prod_{F'' \in \mathscr{S}''} x_{F''}\Bigg)=x_F \prod_{F' \in \mathscr{S}'} x_{F'} \prod_{F'' \in \mathscr{S}''} x_{F''}.
\]
The composition  $\psi_\mathrm{M}^F \circ \varphi_\mathrm{M}^F$ is multiplication by the element $x_F$,
and  the composition  $\varphi_\mathrm{M}^F \circ \psi_\mathrm{M}^F$ is multiplication by the element $\varphi_\mathrm{M}^F(x_F)$.
\end{proposition}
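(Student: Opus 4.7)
My approach is to define $\psi^F_\mathrm{M}$ as the Gysin pushforward along the inclusion $\iota_F$ of the torus-invariant subvariety of $X_{\Pi_\mathrm{M}}$ corresponding to the ray $\rho_F$, using Proposition~\ref{PropositionStars}(\ref{PropositionStars2}) to identify the Chow ring of that subvariety with $\underline{\mathrm{CH}}(\mathrm{M}_F) \otimes \mathrm{CH}(\mathrm{M}^F)$ and Proposition~\ref{PropositionToricInterpretation} to identify the Chow ring of the ambient with $\mathrm{CH}(\mathrm{M})$. The fact that $\psi^F_\mathrm{M}$ is a $\mathrm{CH}(\mathrm{M})$-module homomorphism with respect to the action through $\varphi^F_\mathrm{M}$ is then the toric projection formula $\iota_{F*}(\iota_F^* \alpha \cdot \beta) = \alpha \cdot \iota_{F*}\beta$, and the identity $\psi^F_\mathrm{M}(1 \otimes 1) = x_F$ records that the pushforward of the fundamental class from the orbit closure equals the divisor class $x_F$.

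To verify the explicit formula on monomials, I would observe using Proposition~\ref{DefinitionXPullback} that for collections $\mathscr{S}'$ and $\mathscr{S}''$ as in the statement,
\[
\prod_{F'\in\mathscr{S}'} x_{F'\setminus F}\otimes \prod_{F''\in\mathscr{S}''} x_{F''}\ =\ \varphi^F_\mathrm{M}\Bigl(\prod_{F'\in\mathscr{S}'} x_{F'}\cdot\prod_{F''\in\mathscr{S}''} x_{F''}\Bigr),
\]
and then apply the projection formula in the form $\psi^F_\mathrm{M}(\varphi^F_\mathrm{M}(\eta)) = \eta\cdot\psi^F_\mathrm{M}(1\otimes 1) = \eta\cdot x_F$. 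This same identity is precisely the first composition claim $\psi^F_\mathrm{M}\circ\varphi^F_\mathrm{M} = x_F$. For the second composition claim, given any $\xi$ in the tensor product, write $\xi = \varphi^F_\mathrm{M}(\eta)$ and compute
\[
\varphi^F_\mathrm{M}(\psi^F_\mathrm{M}(\xi))\ =\ \varphi^F_\mathrm{M}(\eta\cdot x_F)\ =\ \xi\cdot \varphi^F_\mathrm{M}(x_F),
\]
using that $\varphi^F_\mathrm{M}$ is an algebra homomorphism.

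For uniqueness, any $\mathrm{CH}(\mathrm{M})$-module homomorphism $\psi$ satisfying the stated formula must send $1\otimes 1$ to $x_F$ (the case $\mathscr{S}' = \mathscr{S}'' = \varnothing$), and hence by the module axiom sends $\varphi^F_\mathrm{M}(\eta)$ to $\eta\cdot x_F$ for every $\eta\in\mathrm{CH}(\mathrm{M})$. This pins $\psi$ down on all of $\underline{\mathrm{CH}}(\mathrm{M}_F)\otimes\mathrm{CH}(\mathrm{M}^F)$ provided $\varphi^F_\mathrm{M}$ is surjective.

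The essential input of the whole argument is therefore the surjectivity of $\varphi^F_\mathrm{M}$, which is the step I would single out as the main obstacle. This follows from the piecewise-polynomial interpretation of $\mathrm{CH}(\Pi_\mathrm{M})$ afforded by Proposition~\ref{PropositionToricInterpretation}: the map $\varphi^F_\mathrm{M}$ corresponds to restricting a continuous piecewise polynomial function on $\Pi_\mathrm{M}$ to $\text{star}_{\rho_F}\Pi_\mathrm{M}$, and any piecewise polynomial on the star extends to one on the full fan. Once this surjectivity is in hand, every remaining assertion in the proposition is a formal consequence of the projection formula.
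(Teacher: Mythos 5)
Your argument is correct and is essentially the paper's own: the paper declares this proposition ``straightforward'' precisely because it follows from the general toric facts set up at the start of Section~\ref{sec:gysin} (the Gysin pushforward along the orbit closure of $\rho_F$, the projection formula, $\iota_*\iota^*=\,$multiplication by $x_F$, and the surjectivity of the pullback), which is exactly what you spell out. One cosmetic remark: the surjectivity of $\varphi^F_\mathrm{M}$ is most directly read off from the explicit formulas of Proposition~\ref{DefinitionXPullback}, since every generator of $\underline{\mathrm{CH}}(\mathrm{M}_F)\otimes\mathrm{CH}(\mathrm{M}^F)$ is visibly in the image, whereas your piecewise-polynomial extension argument is slightly delicate to make literal because the star fan lives in a quotient vector space rather than being a subfan of $\Pi_\mathrm{M}$.
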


Proposition \ref{DefinitionXPushforward} shows that the pushforward $\psi^F_\mathrm{M}$ commutes with the degree maps: %n Definition \ref{DefinitionDegreemap}:
%\begin{equation}\label{equation_degreepushforward}
\[
\underline{\deg}_{\mathrm{M}_F} \otimes \deg_{\mathrm{M}^F} = \deg_\mathrm{M} \circ\ \psi_\mathrm{M}^F.
\]

% It is straightforward to check that the explicit descriptions of $\varphi^F_\mathrm{M}$ and $\psi^F_\mathrm{M}$  in Definition \ref{DefinitionXPullback} agree with 
%  the intersection theoretic definitions of $\varphi^F_\mathrm{M}$ and $\psi^F_\mathrm{M}$.
%Lemma \ref{DefinitionXPullback} makes it  clear that  the pushforward $\psi^F_\mathrm{M}$ commutes with the degree maps: %n Definition \ref{DefinitionDegreemap}:
%\begin{equation}\label{equation_degreepushforward}
%\[
%\underline{\deg}_{\mathrm{M}_F} \otimes \deg_{\mathrm{M}^F} = \deg_\mathrm{M} \circ\ \psi_\mathrm{M}^F.
%\]
%\end{equation}
% When there is no danger of confusion, we simply write ``$\deg$'' for the various degree maps.

%Suppose for the moment that, by induction on $n$,  Poincar\'e duality for $\underline{\mathrm{CH}}(\mathrm{M}_F)$ and $\mathrm{CH}(\mathrm{M}^F)$ in Theorem \ref{TheoremChowKahlerPackage} are known to hold.

\begin{proposition}\label{PropositionPushforwardI}
If   $\underline{\mathrm{CH}}(\mathrm{M}_F)$ and $\mathrm{CH}(\mathrm{M}^F)$ satisfy the Poincar\'e duality part of Theorem \ref{TheoremChowKahlerPackage},
then %the pushforward 
 $\psi_\mathrm{M}^F$ is injective.
\end{proposition}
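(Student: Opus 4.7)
The plan is to exploit three ingredients: the Poincaré duality hypothesis for each tensor factor, the surjectivity of the pullback $\varphi_\mathrm{M}^F$, and the fact (recorded after Proposition \ref{DefinitionXPushforward}) that $\psi_\mathrm{M}^F$ commutes with degree maps and is a $\mathrm{CH}(\mathrm{M})$-module map, where $\mathrm{CH}(\mathrm{M})$ acts on the tensor product through $\varphi_\mathrm{M}^F$.

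First I would observe that $\underline{\mathrm{CH}}(\mathrm{M}_F)\otimes\mathrm{CH}(\mathrm{M}^F)$ satisfies Poincaré duality with respect to the pairing $\underline{\deg}_{\mathrm{M}_F}\otimes\deg_{\mathrm{M}^F}$. This is a standard feature of tensor products of graded Artinian algebras with one-dimensional socles in top degree: choose dual bases in each factor, take all tensor products, and check that the pairing becomes the tensor product of the pairings, which is nondegenerate iff each factor is. Next I would verify that $\varphi_\mathrm{M}^F$ is surjective. The algebra $\underline{\mathrm{CH}}(\mathrm{M}_F)\otimes\mathrm{CH}(\mathrm{M}^F)$ is generated by the classes $x_{G\setminus F}\otimes 1$ for flats $G\supsetneq F$, by $1\otimes x_{G'}$ for flats $G'\subsetneq F$, and by $1\otimes y_i$ for $i\in F$; each of these is in the image of $\varphi_\mathrm{M}^F$ by the explicit formulas of Proposition \ref{DefinitionXPullback}.

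With these two facts in hand, the injectivity argument is a two-line diagram chase. Suppose $\eta\in\underline{\mathrm{CH}}(\mathrm{M}_F)\otimes\mathrm{CH}(\mathrm{M}^F)$ is a nonzero homogeneous class. By Poincaré duality of the tensor product there exists a homogeneous $\nu$ of complementary total degree with
\[
(\underline{\deg}_{\mathrm{M}_F}\otimes\deg_{\mathrm{M}^F})(\eta\nu)\neq 0.
\]
By surjectivity, lift $\nu$ to $\tilde\nu\in\mathrm{CH}(\mathrm{M})$ with $\varphi_\mathrm{M}^F(\tilde\nu)=\nu$. Then, using the $\mathrm{CH}(\mathrm{M})$-linearity of $\psi_\mathrm{M}^F$ and the identity $\deg_\mathrm{M}\circ\psi_\mathrm{M}^F=\underline{\deg}_{\mathrm{M}_F}\otimes\deg_{\mathrm{M}^F}$,
\[
\deg_\mathrm{M}\bigl(\tilde\nu\cdot\psi_\mathrm{M}^F(\eta)\bigr)=\deg_\mathrm{M}\bigl(\psi_\mathrm{M}^F(\nu\cdot\eta)\bigr)=(\underline{\deg}_{\mathrm{M}_F}\otimes\deg_{\mathrm{M}^F})(\eta\nu)\neq 0,
\]
so $\psi_\mathrm{M}^F(\eta)\neq 0$.

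The only conceptual step is the tensor-product-Poincaré-duality observation; everything else is bookkeeping with the properties already established for $\varphi_\mathrm{M}^F$ and $\psi_\mathrm{M}^F$. I do not expect a genuine obstacle, but I would want to make sure the homogeneity of $\nu$ is tracked carefully, so that $\tilde\nu$ can be chosen in a single graded piece of $\mathrm{CH}(\mathrm{M})$ and the degree computation lands in the top degree $d$ where $\deg_\mathrm{M}$ detects nonvanishing.
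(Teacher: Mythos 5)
Your argument is correct and is essentially the paper's own proof: the paper also combines Poincar\'e duality on the tensor factors, surjectivity of $\varphi_\mathrm{M}^F$, and the $\mathrm{CH}(\mathrm{M})$-linearity and degree-compatibility of $\psi_\mathrm{M}^F$, merely phrased as a proof by contradiction rather than contrapositively. The two details you flag (Poincar\'e duality for the tensor product and surjectivity of the pullback) are exactly the points the paper leaves implicit, and your justifications of them are fine.
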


In other words, assuming Poincar\'e duality for the Chow rings, the graded $\mathrm{CH}(\mathrm{M})$-module $ \underline{\mathrm{CH}}(\mathrm{M}_F) \otimes \mathrm{CH}(\mathrm{M}^F)[-1]$ 
is isomorphic to  the principal ideal of $x_F$ in $\mathrm{CH}(\mathrm{M})$.\footnote{For a graded vector space $V$, we write $V[m]$ for the graded vector space whose degree $k$ piece is equal to $V^{k+m}$.}
In particular,
\[
\underline{\mathrm{CH}}(\mathrm{M})[-1] \cong \text{ideal}(x_\varnothing) \subseteq \mathrm{CH}(\mathrm{M}).
\]

\begin{proof}
% That $\psi^F=\psi_\mathrm{M}^F$ is a homomorphism of $\mathrm{CH}(\mathrm{M})$-modules follows from the projection formula.
% We leave it as an exercise to verify the statement directly using the explicit descriptions of the Chow rings and the pushforward.
% 
% We show that  $\psi^F$ is injective.
%That $\psi^F$ is injective in degree $d-1$ follows from the identity
%\[
%\deg= \deg \circ \psi^F.
%\]
We will use the symbol $\deg_F$ to denote the degree function $\underline{\deg}_{\mathrm{M}_F} \otimes \deg_{\mathrm{M}^F}$.
%on the top graded piece of the ring $\underline{\mathrm{CH}}(\mathrm{M}_F)\otimes\mathrm{CH}(\mathrm{M}^F)$.
For contradiction, suppose that $\psi^F_{\mathrm{M}}(\eta)=0$ for $\eta \neq 0$.
By the two Poincar\'e duality statements in Theorem \ref{TheoremChowKahlerPackage},
there is an element $\nu$  such that $\deg_F(\nu \eta)=1$.
By surjectivity of the pullback $\varphi^F_\mathrm{M}$,
there is an element $\mu$ such that $\nu=\varphi^F_{\mathrm{M}}(\mu)$.
Since $\psi^F_{\mathrm{M}}$ is a $\mathrm{CH}(\mathrm{M})$-module homomorphism that commutes with the degree maps,  we have
\[
1=\deg_F(\nu\eta)= \deg_{\mathrm{M}}( \psi^F_{\mathrm{M}} (\nu \eta))= \deg_{\mathrm{M}}( \psi^F_{\mathrm{M}} ( \varphi^F_{\mathrm{M}}(\mu) \eta))
= \deg_{\mathrm{M}}(\mu  \psi^F_{\mathrm{M}} ( \eta))
=\deg_{\mathrm{M}}(0)=0,
\]
which is a contradiction.
\end{proof}

We next give an explicit description of the pullback $\iota^*$ and the pushforward $\iota_*$ when $\Sigma$ is the  Bergman fan $\underline{\Pi}_\mathrm{M}$ and $\sigma$ is the ray $\underline{\rho}_F$ of  a nonempty proper flat $F$ of $\mathrm{M}$.
Recall from Proposition \ref{PropositionStars} that the star of $\underline{\rho}_F$ admits the decomposition
\[
 \text{star}_{\underline{\rho}_F} \underline{\Pi}_\mathrm{M} \cong \underline{\Pi}_{\mathrm{M}_{F}}\times \underline{\Pi}_{\mathrm{M}^F}.
\]
Thus  we may identify the Chow ring of the star of $\underline{\rho}_F$ with $ \underline{\mathrm{CH}}(\mathrm{M}_F)\otimes \underline{\mathrm{CH}}(\mathrm{M}^F)$. 
We denote the pullback to the tensor product by $\underline{\varphi}_\mathrm{M}^F$ and the pushforward  from the tensor product by $\underline{\psi}_\mathrm{M}^F$:
\[
\xymatrix{
\underline{\mathrm{CH}}(\mathrm{M})  \ar[rr]^{x_F} \ar[dr]_{\underline{\varphi}^F_\mathrm{M}}&&\underline{\mathrm{CH}}(\mathrm{M}) 
\\
& \underline{\mathrm{CH}}(\mathrm{M}_F)\otimes \underline{\mathrm{CH}}(\mathrm{M}^F)  \ar[ur]_{\underline{\psi}^F_\mathrm{M}}&
}
\]
% The maps  $\underline{\varphi}_\mathrm{M}^F$ and $\underline{\psi}_\mathrm{M}^F$ can be described using the elements $\underline{\alpha}$ and $\underline{\beta}$ as follows.
The following analogues of Propositions \ref{DefinitionXPullback} and \ref{DefinitionXPushforward} are straightforward.

\begin{proposition}\label{DefinitionUnderlinedPull}
The pullback $\underline{\varphi}^F_\mathrm{M}$ is the unique graded algebra homomorphism
\[
 \underline{\mathrm{CH}}(\mathrm{M}) \longrightarrow  \underline{\mathrm{CH}}(\mathrm{M}_F) \otimes \underline{\mathrm{CH}}(\mathrm{M}^F) 
\]
that satisfies the following properties:
\begin{enumerate}[$\bullet$]\itemsep 5pt
\item If $G$ is a nonempty proper flat of $\mathrm{M}$ incomparable to $F$, then $\underline{\varphi}^F_\mathrm{M}(x_G)=0$.
\item If $G$ is a nonempty proper flat of $\mathrm{M}$ properly contained in $F$, then $\underline{\varphi}^F_\mathrm{M}(x_G)=1 \otimes x_G$.
\item If $G$ is a nonempty proper flat of $\mathrm{M}$ properly containing $F$, then $\underline{\varphi}^F_\mathrm{M}(x_G)=x_{G \setminus F} \otimes 1$.
\end{enumerate}
 The above three properties imply the following additional properties of $\underline{\varphi}^F_\mathrm{M}$:
\begin{enumerate}[$\bullet$]\itemsep 5pt
\item The equality $\underline{\varphi}^F_\mathrm{M}(x_F)=-1\otimes\underline{\alpha}_{\mathrm{M}^F}-\underline{\beta}_{\mathrm{M}_F} \otimes 1$ holds.
\item The equality $\underline{\varphi}^F_\mathrm{M}(\underline{\alpha}_\mathrm{M})= \underline{\alpha}_{\mathrm{M}_F} \otimes 1$ holds. %and $\underline{\varphi}^F_\mathrm{M}(\underline{\beta}_\mathrm{M})=1 \otimes \underline{\beta}_{\mathrm{M}^F}$ hold. 
\item The equality $\underline{\varphi}^F_\mathrm{M}(\underline{\beta}_\mathrm{M})=1 \otimes \underline{\beta}_{\mathrm{M}^F}$ holds. \end{enumerate}
\end{proposition}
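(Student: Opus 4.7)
The plan is to define $\underline{\varphi}^F_\mathrm{M}$ as the toric pullback along the closed embedding of the torus orbit closure $V(\underline{\rho}_F) \hookrightarrow X_{\underline{\Pi}_\mathrm{M}}$, using the identification $V(\underline{\rho}_F) \cong X_{\underline{\Pi}_{\mathrm{M}_F}} \times X_{\underline{\Pi}_{\mathrm{M}^F}}$ supplied by Proposition \ref{PropositionStars}(3). This immediately produces a graded algebra homomorphism with the correct target, and I would verify the three main bullet points by tracing what happens to each torus-invariant divisor class $x_G$ under restriction. If $G$ is incomparable to $F$, then $\underline{\rho}_G$ does not span a cone with $\underline{\rho}_F$ and so drops out of $\text{star}_{\underline{\rho}_F}\underline{\Pi}_\mathrm{M}$, giving $0$; if $G \subsetneq F$, then $\underline{\rho}_G$ is carried by $\underline{\iota}_F$ to the corresponding ray in the $\underline{\Pi}_{\mathrm{M}^F}$-factor, giving $1 \otimes x_G$; and if $G \supsetneq F$, then $\underline{\rho}_G$ is carried to $\underline{\rho}_{G \setminus F}$ in the $\underline{\Pi}_{\mathrm{M}_F}$-factor, giving $x_{G \setminus F} \otimes 1$.

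For uniqueness, the key point is that the classes $x_G$ (including $x_F$ itself) generate $\underline{\mathrm{CH}}(\mathrm{M})$ as a graded $\mathbb{Q}$-algebra, so any graded algebra homomorphism out of $\underline{\mathrm{CH}}(\mathrm{M})$ is determined once its values on these generators are fixed. The three rules already specify the values on $x_G$ for $G \neq F$, and the value on $x_F$ is then forced by the linear relations defining $\underline{I}_\mathrm{M}$: since $F$ is a proper flat we may choose $i_1 \in F$ and $i_2 \in E \setminus F$, and apply the homomorphism to the relation $\sum_{i_1 \in G} x_G - \sum_{i_2 \in G} x_G = 0$. Splitting the sum according to whether the flat $G$ strictly contains, is strictly contained in, equals, or is incomparable to $F$, and reassembling the contributions using the definitions of $\underline{\alpha}$ and $\underline{\beta}$ inside $\underline{\mathrm{CH}}(\mathrm{M}_F)$ and $\underline{\mathrm{CH}}(\mathrm{M}^F)$, yields the required identity $\underline{\varphi}^F_\mathrm{M}(x_F) = -1 \otimes \underline{\alpha}_{\mathrm{M}^F} - \underline{\beta}_{\mathrm{M}_F} \otimes 1$.

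The remaining two formulas are then direct computations. To compute $\underline{\varphi}^F_\mathrm{M}(\underline{\alpha}_\mathrm{M})$, I would expand $\underline{\alpha}_\mathrm{M} = \sum_{i \in G} x_G$ using an element $i \in E \setminus F$: the only flats surviving under the pullback are those strictly containing $F$ (those incomparable to $F$ contribute $0$ and none can be strictly contained in $F$), and these contributions assemble into $\underline{\alpha}_{\mathrm{M}_F} \otimes 1$. The formula $\underline{\varphi}^F_\mathrm{M}(\underline{\beta}_\mathrm{M}) = 1 \otimes \underline{\beta}_{\mathrm{M}^F}$ is obtained by the mirror-image computation: expand $\underline{\beta}_\mathrm{M} = \sum_{i \notin G} x_G$ using $i \in F$, so that only flats $G \subsetneq F$ not containing $i$ survive. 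The only substantive obstacle in the whole argument is the careful case analysis in these partitioned sums; beyond that, the proof is parallel to that of Proposition \ref{DefinitionXPullback}.
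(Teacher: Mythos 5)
Your proposal is correct and is exactly the verification the paper has in mind: it defines $\underline{\varphi}^F_\mathrm{M}$ as the toric restriction to the star of $\underline{\rho}_F$ (which the paper already sets up via Proposition \ref{PropositionStars}), checks the three bullet points by tracing rays, and derives uniqueness and the value on $x_F$ from the linear relations in $\underline{I}_\mathrm{M}$ using $i_1\in F$ and $i_2\in E\setminus F$. The paper states the result without proof as "straightforward," and your case analysis of the partitioned sums correctly supplies the omitted details, including the formulas for $\underline{\varphi}^F_\mathrm{M}(x_F)$, $\underline{\varphi}^F_\mathrm{M}(\underline{\alpha}_\mathrm{M})$, and $\underline{\varphi}^F_\mathrm{M}(\underline{\beta}_\mathrm{M})$.
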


\begin{proposition}\label{DefinitionUnderlinedPush}
The pushforward $ \underline{\psi}^F_\mathrm{M}$ is the unique  $ \underline{\mathrm{CH}}(\mathrm{M})$-module homomorphism
\[
\underline{\mathrm{CH}}(\mathrm{M}_F)  \otimes \underline{\mathrm{CH}}(\mathrm{M}^F) \longrightarrow \underline{\mathrm{CH}}(\mathrm{M})
 \]
 that satisfies,
for any collection $\mathscr{S}'$ of proper flats of $\mathrm{M}$ strictly containing $F$
and any collection $\mathscr{S}''$ of nonempty proper flats of $\mathrm{M}$ strictly contained in $F$,
\[
\underline{\psi}^F_\mathrm{M}\Bigg( \prod_{F' \in \mathscr{S}'} x_{F' \setminus F}  \otimes \prod_{F'' \in \mathscr{S}''} x_{F''}\Bigg)=x_F \prod_{F' \in \mathscr{S}'} x_{F'} \prod_{F'' \in \mathscr{S}''} x_{F''}.
\]
The composition  $\underline{\psi}_\mathrm{M}^F \circ \underline{\varphi}_\mathrm{M}^F$ is multiplication by the element $x_F$,
and  the composition  $\underline{\varphi}_\mathrm{M}^F \circ \underline{\psi}_\mathrm{M}^F$ is multiplication by the element $\underline{\varphi}_\mathrm{M}^F(x_F)$.
\end{proposition}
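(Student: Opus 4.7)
The plan is to derive Proposition \ref{DefinitionUnderlinedPush} from the general toric-geometric framework set up at the start of Section \ref{sec:gysin}, running in exact parallel with Proposition \ref{DefinitionXPushforward}. Under the isomorphism $\underline{\mathrm{CH}}(\mathrm{M})\cong\mathrm{CH}(\underline{\Pi}_\mathrm{M})$ from Proposition \ref{PropositionToricInterpretation} and the identification $\underline{\Pi}_{\mathrm{M}_F}\times \underline{\Pi}_{\mathrm{M}^F}\cong \text{star}_{\underline{\rho}_F}\underline{\Pi}_\mathrm{M}$ from Proposition \ref{PropositionStars}, the map $\underline{\psi}^F_\mathrm{M}$ is defined to be the toric pushforward $\iota_*$ along the inclusion of the torus-orbit closure attached to the ray $\underline{\rho}_F$. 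This pushforward exists as a $\mathrm{CH}(\underline{\Pi}_\mathrm{M})$-module map by general toric theory, so existence of \emph{some} map is automatic; the content of the proposition is to identify it explicitly on monomials and to deduce the two composition formulas.

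The first step is to verify the monomial formula. A monomial $\prod_{F'\in\mathscr{S}'} x_{F'\setminus F}\otimes \prod_{F''\in\mathscr{S}''} x_{F''}$ is, by construction, the class of the cone $\tau$ in $\text{star}_{\underline{\rho}_F}\underline{\Pi}_\mathrm{M}$ spanned by the rays $\underline{\rho}_{F'\setminus F}$ of $\underline{\Pi}_{\mathrm{M}_F}$ and $\underline{\rho}_{F''}$ of $\underline{\Pi}_{\mathrm{M}^F}$; under $\underline{\iota}_F$ these rays correspond to $\underline{\rho}_{F'}$ and $\underline{\rho}_{F''}$ in $\underline{\Pi}_\mathrm{M}$, so the cone $\underline{\rho}_F+\tau$ is precisely $\underline{\sigma}_{\mathscr{S}'\cup\{F\}\cup\mathscr{S}''}$. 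Because $\underline{\Pi}_\mathrm{M}$ is unimodular, the standard monomial-pushforward identity $\iota_*(x_\tau)=x_{\underline{\rho}_F+\tau}$ in smooth toric geometry delivers the claimed value $x_F\prod_{F'}x_{F'}\prod_{F''}x_{F''}$.

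For uniqueness, note that $\underline{\varphi}^F_\mathrm{M}$ is surjective by Proposition \ref{DefinitionUnderlinedPull}, so the tensor product $\underline{\mathrm{CH}}(\mathrm{M}_F)\otimes \underline{\mathrm{CH}}(\mathrm{M}^F)$ is a cyclic $\underline{\mathrm{CH}}(\mathrm{M})$-module generated by $1\otimes 1$. The monomial formula specialized to $\mathscr{S}'=\mathscr{S}''=\varnothing$ forces $\underline{\psi}^F_\mathrm{M}(1\otimes 1)=x_F$, and a $\underline{\mathrm{CH}}(\mathrm{M})$-module map out of a cyclic module is determined by the image of the generator.

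The composition identities are then formal consequences of the projection formula, and this is where I expect the only real bookkeeping. Being a $\underline{\mathrm{CH}}(\mathrm{M})$-module homomorphism means $\underline{\psi}^F_\mathrm{M}(\underline{\varphi}^F_\mathrm{M}(a)\cdot b)=a\cdot \underline{\psi}^F_\mathrm{M}(b)$; setting $b=1$ yields $\underline{\psi}^F_\mathrm{M}(\underline{\varphi}^F_\mathrm{M}(a))=a\cdot x_F$, i.e.\ $\underline{\psi}^F_\mathrm{M}\circ\underline{\varphi}^F_\mathrm{M}$ is multiplication by $x_F$. For the reverse composition, apply the ring homomorphism $\underline{\varphi}^F_\mathrm{M}$ to both sides to get $\underline{\varphi}^F_\mathrm{M}(\underline{\psi}^F_\mathrm{M}(\underline{\varphi}^F_\mathrm{M}(a)))=\underline{\varphi}^F_\mathrm{M}(a)\cdot\underline{\varphi}^F_\mathrm{M}(x_F)$, and invoke surjectivity of $\underline{\varphi}^F_\mathrm{M}$ to conclude that $\underline{\varphi}^F_\mathrm{M}\circ\underline{\psi}^F_\mathrm{M}$ is multiplication by $\underline{\varphi}^F_\mathrm{M}(x_F)$. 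The main obstacle is keeping straight the two lattices and their quotients when matching the rays of $\underline{\Pi}_{\mathrm{M}_F}\times\underline{\Pi}_{\mathrm{M}^F}$ to cones of $\underline{\Pi}_\mathrm{M}$ through $\underline{\iota}_F$; once that identification is made, everything else reduces to the general toric identities already used in Proposition \ref{DefinitionXPushforward}.
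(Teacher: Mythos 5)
Your proposal is correct and follows exactly the route the paper intends: the paper offers no proof, declaring the proposition a straightforward consequence of the general toric facts about $\iota^*$ and $\iota_*$ set up at the start of Section \ref{sec:gysin}, and your argument (orbit-closure pushforward on monomials via unimodularity, uniqueness from cyclicity of the target as a $\underline{\mathrm{CH}}(\mathrm{M})$-module, compositions from the projection formula and surjectivity of $\underline{\varphi}^F_\mathrm{M}$) is precisely that straightforward verification. The only point worth a passing remark is that when $\mathscr{S}'\cup\{F\}\cup\mathscr{S}''$ is not a flag the cited cone does not exist, but then both sides of the monomial formula vanish, so nothing is lost.
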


% It is straightforward to check that the explicit descriptions of $\underline{\varphi}^F_\mathrm{M}$ and $\underline{\psi}^F_\mathrm{M}$  in Definition \ref{DefinitionUnderlinedPullPush} agree with 
% the intersection theoretic definitions of $\underline{\varphi}^F_\mathrm{M}$ and $\underline{\psi}^F_\mathrm{M}$.
 % The explicit description of $\underline{\psi}^F_\mathrm{M}$ 
Proposition \ref{DefinitionUnderlinedPush} shows that
the pushforward $\underline{\psi}^F_\mathrm{M}$ commutes with the degree maps: %in Definition \ref{DefinitionDegreemap}:
\[
\underline{\deg}_{\mathrm{M}_F} \otimes \underline{\deg}_{\mathrm{M}^F} = \underline{\deg}_\mathrm{M} \circ\ \underline{\psi}_\mathrm{M}^F.
\]

%Suppose for the moment that, by induction on $n$, Poincar\'e duality for $\underline{\mathrm{CH}}$ and $\mathrm{CH}$ in Theorem \ref{TheoremChowKahlerPackage} are known to hold.

\begin{proposition}\label{upsi injective}
If   $\underline{\mathrm{CH}}(\mathrm{M}_F)$ and $\underline{\mathrm{CH}}(\mathrm{M}^F)$ satisfy the Poincar\'e duality part of Theorem \ref{TheoremChowKahlerPackage},
then %the $\underline{\mathrm{CH}}(\mathrm{M})$-module homomorphism 
$\underline{\psi}^F_\mathrm{M}$ is injective.
\end{proposition}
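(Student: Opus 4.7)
The plan is to mimic verbatim the proof of Proposition \ref{PropositionPushforwardI}, which handled the augmented case by the same formal structure. I first observe that all the ingredients used there have exact analogues in the un-augmented setting: the pushforward $\underline{\psi}_\mathrm{M}^F$ is a $\underline{\mathrm{CH}}(\mathrm{M})$-module map by Proposition \ref{DefinitionUnderlinedPush}, it commutes with the degree maps in the sense that $\underline{\deg}_\mathrm{M}\circ\underline{\psi}_\mathrm{M}^F=\underline{\deg}_{\mathrm{M}_F}\otimes\underline{\deg}_{\mathrm{M}^F}$ (the identity displayed just before the proposition), and the pullback $\underline{\varphi}_\mathrm{M}^F$ is a surjective graded algebra homomorphism since it is induced by the inclusion of a torus orbit closure into a smooth toric variety (equivalently, one sees directly from Proposition \ref{DefinitionUnderlinedPull} that every generator $1\otimes x_G$, $x_{G\setminus F}\otimes 1$ of the target lies in its image).

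Next, I would record the tensor-product form of Poincaré duality that we need. If both $\underline{\mathrm{CH}}(\mathrm{M}_F)$ and $\underline{\mathrm{CH}}(\mathrm{M}^F)$ satisfy the Poincaré duality part of Theorem \ref{TheoremChowKahlerPackage}, then the bilinear pairing
\[
\bigl(\underline{\mathrm{CH}}(\mathrm{M}_F)\otimes\underline{\mathrm{CH}}(\mathrm{M}^F)\bigr)\times\bigl(\underline{\mathrm{CH}}(\mathrm{M}_F)\otimes\underline{\mathrm{CH}}(\mathrm{M}^F)\bigr)\longrightarrow\mathbb{Q}
\]
given by $(\xi_1,\xi_2)\longmapsto(\underline{\deg}_{\mathrm{M}_F}\otimes\underline{\deg}_{\mathrm{M}^F})(\xi_1\xi_2)$ is non-degenerate in each complementary-degree pair, as this property passes through tensor products of finite-dimensional graded algebras with perfect Poincaré pairings.

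With these tools in hand, the proof is immediate by contradiction. Suppose $\underline{\psi}_\mathrm{M}^F(\eta)=0$ for some nonzero $\eta\in\underline{\mathrm{CH}}(\mathrm{M}_F)\otimes\underline{\mathrm{CH}}(\mathrm{M}^F)$. By Poincaré duality on the tensor product, pick $\nu$ of complementary degree with $(\underline{\deg}_{\mathrm{M}_F}\otimes\underline{\deg}_{\mathrm{M}^F})(\nu\eta)=1$. By surjectivity of $\underline{\varphi}_\mathrm{M}^F$, choose $\mu\in\underline{\mathrm{CH}}(\mathrm{M})$ with $\underline{\varphi}_\mathrm{M}^F(\mu)=\nu$. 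Then, using the module property and the compatibility with degree maps,
\[
1=(\underline{\deg}_{\mathrm{M}_F}\otimes\underline{\deg}_{\mathrm{M}^F})(\nu\eta)=\underline{\deg}_\mathrm{M}\bigl(\underline{\psi}_\mathrm{M}^F(\underline{\varphi}_\mathrm{M}^F(\mu)\,\eta)\bigr)=\underline{\deg}_\mathrm{M}\bigl(\mu\cdot\underline{\psi}_\mathrm{M}^F(\eta)\bigr)=0,
\]
a contradiction, so $\underline{\psi}_\mathrm{M}^F$ is injective.

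There is no real obstacle here; the only point that deserves a moment of care is the tensor-product Poincaré duality statement, but for finite-dimensional graded $\mathbb{Q}$-algebras equipped with a compatible degree map this is completely formal, and the rest is bookkeeping identical to the augmented case.
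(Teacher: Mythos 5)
Your proposal is correct and is exactly the argument the paper intends: the paper's proof of Proposition \ref{upsi injective} simply says it is ``essentially identical to that of Proposition \ref{PropositionPushforwardI},'' and you have carried out that adaptation faithfully, including the only point needing care (non-degeneracy of the Poincar\'e pairing on the tensor product, which is formal). No gaps.
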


In other words, assuming Poincar\'e duality for the Chow rings,
 the graded $\underline{\mathrm{CH}}(\mathrm{M})$-module $ \underline{\mathrm{CH}}(\mathrm{M}_F) \otimes \underline{\mathrm{CH}}(\mathrm{M}^F)[-1]$ 
is isomorphic to  the principal ideal of $x_F$ in $\underline{\mathrm{CH}}(\mathrm{M})$.

\begin{proof}
The proof is essentially identical to that of Proposition \ref{PropositionPushforwardI}.
\end{proof}

Last, we give an explicit description of the pullback $\iota^*$ and the pushforward $\iota_*$ when $\Sigma$ is the  augmented Bergman fan $\Pi_\mathrm{M}$ and $\sigma$ is the cone $\sigma_I$ of  a  independent set  $I$ of $\mathrm{M}$.
By Proposition \ref{PropositionStars}, we have
\[
 \text{star}_{\sigma_I} \Pi_\mathrm{M} \cong \Pi_{\mathrm{M}_F},
\]
where $F$ is the closure of $I$ in $\mathrm{M}$.
Thus we may identify the Chow ring of the star  of $\sigma_I$ with  $\mathrm{CH}(\mathrm{M}_F)$.
We denote the corresponding pullback  by $\varphi^\mathrm{M}_F$ and the pushforward  by $\psi^\mathrm{M}_F$:
\[
\xymatrix{
\mathrm{CH}(\mathrm{M})  \ar[rr]^{y_F} \ar[dr]_{\varphi_F^\mathrm{M}}&&\mathrm{CH}(\mathrm{M}) 
\\
& \mathrm{CH}(\mathrm{M}_F)  \ar[ur]_{\psi_F^\mathrm{M}}&
}
\]
Note that the pullback  and the pushforward only depend on $F$ and not on $I$.
% Let $k$ be the cardinality of $I$.

The following analogues of Propositions  \ref{DefinitionXPullback} and \ref{DefinitionXPushforward} are straightforward.

\begin{proposition}\label{DefinitionYPull}
The pullback $\varphi_F^\mathrm{M}$ is the unique graded algebra homomorphism
\[
 \mathrm{CH}(\mathrm{M}) \longrightarrow  \mathrm{CH}(\mathrm{M}_F) 
\]
that satisfies the following properties:
\begin{enumerate}[$\bullet$]\itemsep 5pt
\item If $G$ is a proper flat of $\mathrm{M}$ that contains $F$, then $\varphi_F^{\mathrm{M}}(x_G)=x_{G\setminus F}$.
\item If $G$ is a proper flat of $\mathrm{M}$ that does not contain $F$, then $\varphi_F^{\mathrm{M}}(x_G)=0$.
\end{enumerate}
 The above two properties imply the following additional properties of $\varphi_F^\mathrm{M}$:
 \begin{enumerate}[$\bullet$]\itemsep 5pt
\item If $i$ is an element of $F$, then $\varphi_F^{\mathrm{M}}(y_i)=0$.
\item If $i$ is an element of $E \setminus F$, then $\varphi_F^{\mathrm{M}}(y_i)=y_i$.
\item The equality $\varphi_F^\mathrm{M}(\alpha_\mathrm{M})=\alpha_{\mathrm{M}_F}$ holds.
\end{enumerate}
\end{proposition}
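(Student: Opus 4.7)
The plan is to realize $\varphi_F^\mathrm{M}$ geometrically as the toric pullback along the closed immersion of the torus orbit closure $V(\sigma_I) \hookrightarrow X_{\Pi_\mathrm{M}}$, and then to read off its action on the generators $x_G$ and $y_i$ from the combinatorics of which rays lie in cones containing $\sigma_I$. Iterated application of Proposition \ref{PropositionStars}(1) to the elements of $I$, together with $\text{cl}_\mathrm{M}(I)=F$, supplies the identification $\text{star}_{\sigma_I}\Pi_\mathrm{M}\cong \Pi_{\mathrm{M}_F}$, so the target of the pullback is $\mathrm{CH}(\mathrm{M}_F)$; in particular the map depends only on $F$, not on the choice of basis $I$.

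First I would verify the two defining bullets. For a proper flat $G\supseteq F$ we have $I\subseteq F\subseteq G$, so $\sigma_I+\rho_G$ is a cone of $\Pi_\mathrm{M}$, and under the star identification $\rho_G$ is sent to the ray of $\Pi_{\mathrm{M}_F}$ labeled by the proper flat $G\setminus F$ of $\mathrm{M}_F$ (using that $-\mathbf{e}_{E\setminus G}=-\mathbf{e}_{(E\setminus F)\setminus(G\setminus F)}$ is already a vector in $\mathbb{R}^{E\setminus F}$). Standard toric geometry then gives $\varphi_F^\mathrm{M}(x_G)=x_{G\setminus F}$. For a proper flat $G\not\supseteq F$ we have $I\not\subseteq G$ (otherwise $F=\text{cl}_\mathrm{M}(I)\subseteq G$), so $\sigma_I+\rho_G$ is not a cone of $\Pi_\mathrm{M}$; hence the divisor $x_G$ does not meet $V(\sigma_I)$ and $\varphi_F^\mathrm{M}(x_G)=0$.

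The remaining bullets follow by substituting into the linear relation $y_i=\sum_{G\text{ proper},\,i\notin G} x_G$ from $I_\mathrm{M}$. When $i\in F$, every proper flat $G$ with $i\notin G$ fails to contain $F$, so each term pulls back to zero and $\varphi_F^\mathrm{M}(y_i)=0$. When $i\in E\setminus F$, only the terms with $G\supseteq F$ and $i\notin G$ survive, and they give
\[
\varphi_F^\mathrm{M}(y_i)=\sum_{\substack{G\supseteq F\text{ proper}\\ i\notin G}} x_{G\setminus F}=\sum_{\substack{G'\text{ proper flat of }\mathrm{M}_F\\ i\notin G'}} x_{G'}=y_i\in\mathrm{CH}(\mathrm{M}_F),
\]
using the analogous relation inside $\mathrm{CH}(\mathrm{M}_F)$. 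Summing the pullback formula for $x_G$ over all proper flats $G$ and retaining only those with $G\supseteq F$ yields $\varphi_F^\mathrm{M}(\alpha_\mathrm{M})=\alpha_{\mathrm{M}_F}$.

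Uniqueness is immediate: the classes $\{x_G\}$ already generate $\mathrm{CH}(\mathrm{M})$ as a graded algebra, since the $y_i$ are expressible in the $x_G$ via the relations in $I_\mathrm{M}$, so a graded algebra homomorphism out of $\mathrm{CH}(\mathrm{M})$ is determined by its values on the $x_G$. The only step that requires genuine care is the identification of the ray labels in the star with the flats of $\mathrm{M}_F$, including the sign convention on primitive generators $-\mathbf{e}_{E\setminus G}$; once that bookkeeping is set up, each bullet is a short verification.
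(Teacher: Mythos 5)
Your proposal is correct and follows exactly the route the paper intends: the paper declares this proposition a "straightforward" consequence of the general toric pullback construction from the beginning of Section \ref{sec:gysin} together with the identification $\text{star}_{\sigma_I}\Pi_\mathrm{M}\cong\Pi_{\mathrm{M}_{\text{cl}(I)}}$ from Proposition \ref{PropositionStars}, and your argument supplies precisely those verifications (ray bookkeeping for the $x_G$, the linear relations in $I_\mathrm{M}$ for the $y_i$ and $\alpha_\mathrm{M}$, and generation by the $x_G$ for uniqueness). No gaps.
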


\begin{proposition}\label{DefinitionYPush}
The pushforward  $\psi_F^\mathrm{M}$ is the unique $\mathrm{CH}(\mathrm{M})$-module homomorphism
\[
 \mathrm{CH}(\mathrm{M}_F) \longrightarrow \mathrm{CH}(\mathrm{M})
\]
that satisfies,
for any collection $\mathscr{S}'$ of proper flats of $\mathrm{M}$  containing $F$,
\[
\psi_F^\mathrm{M}\Bigg( \prod_{F' \in \mathscr{S}'} x_{F' \setminus F}\Bigg)=y_F  \prod_{F' \in \mathscr{S}'} x_{F'}.
\]
The composition  $\psi^\mathrm{M}_F \circ \varphi^\mathrm{M}_F$ is multiplication by the element $y_F$,
and  the composition  $\varphi^\mathrm{M}_F \circ \psi^\mathrm{M}_F$ is zero.
\end{proposition}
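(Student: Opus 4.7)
The plan is to read off the claim from the toric description of $\psi_F^{\mathrm{M}}$ as the Gysin pushforward along the inclusion $\iota \co V(\sigma_I) \hookrightarrow X_{\Pi_\mathrm{M}}$ of the torus orbit closure corresponding to any cone $\sigma_I$ with $I$ a basis of $F$; here we use Proposition \ref{PropositionStars}(1), applied repeatedly, to identify the star with $\Pi_{\mathrm{M}_F}$ and so view $\iota_*$ as a map $\mathrm{CH}(\mathrm{M}_F) \to \mathrm{CH}(\mathrm{M})$. Two standard facts about smooth toric varieties then do almost all the work: the projection formula $\iota_*(\mu \cdot \iota^*\eta) = \iota_*(\mu) \cdot \eta$, and the identity $\iota_* \iota^* = (\cdot\,x_{\sigma_I})$.

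First I would compute $\psi_F^{\mathrm{M}}(1) = y_F$. By the definition of the monomial $x_{\sigma_I}$ as the product of the ray classes in $\sigma_I$, we have $x_{\sigma_I} = \prod_{i \in I} y_i$, which in turn equals $y_F$ by the definition of $y_F$ preceding Proposition \ref{PropositionToricInterpretation} (well-defined by Lemma \ref{PropositionIdentities}(2)). Thus $\iota_* \iota^* = (\cdot\,x_{\sigma_I})$ translates to the first composition formula $\psi_F^{\mathrm{M}} \circ \varphi_F^{\mathrm{M}} = (\cdot\,y_F)$; in particular, evaluating at $1$ gives $\psi_F^{\mathrm{M}}(1) = y_F$.

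Second, the projection formula says exactly that $\psi_F^{\mathrm{M}}$ is $\mathrm{CH}(\mathrm{M})$-linear for the module structure on $\mathrm{CH}(\mathrm{M}_F)$ induced by $\varphi_F^{\mathrm{M}}$. Combined with Proposition \ref{DefinitionYPull}, which gives $\varphi_F^{\mathrm{M}}\bigl(\prod_{F' \in \mathscr{S}'} x_{F'}\bigr) = \prod_{F' \in \mathscr{S}'} x_{F' \setminus F}$ whenever every $F' \in \mathscr{S}'$ contains $F$, this yields the asserted formula:
\[
\psi_F^{\mathrm{M}}\!\Bigl(\prod_{F' \in \mathscr{S}'} x_{F' \setminus F}\Bigr) = \psi_F^{\mathrm{M}}\!\Bigl(\varphi_F^{\mathrm{M}}\bigl(\prod_{F' \in \mathscr{S}'} x_{F'}\bigr) \cdot 1\Bigr) = \Bigl(\prod_{F' \in \mathscr{S}'} x_{F'}\Bigr) \cdot \psi_F^{\mathrm{M}}(1) = y_F \prod_{F' \in \mathscr{S}'} x_{F'}.
\]
Uniqueness is then immediate: $\varphi_F^{\mathrm{M}}$ is surjective (its image already contains every generator of $\mathrm{CH}(\mathrm{M}_F)$ by Proposition \ref{DefinitionYPull}), so $\mathrm{CH}(\mathrm{M}_F)$ is cyclic as a $\mathrm{CH}(\mathrm{M})$-module on the class $1$, and any $\mathrm{CH}(\mathrm{M})$-linear map out of it is determined by its value at $1$, which the asserted formula pins down to $y_F$.

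Finally, for $\varphi_F^{\mathrm{M}} \circ \psi_F^{\mathrm{M}} = 0$, I would argue as follows. Given $\mu \in \mathrm{CH}(\mathrm{M}_F)$, surjectivity of $\varphi_F^{\mathrm{M}}$ lets us write $\mu = \varphi_F^{\mathrm{M}}(\tilde{\mu})$; then, using the first composition identity,
\[
\varphi_F^{\mathrm{M}}\bigl(\psi_F^{\mathrm{M}}(\mu)\bigr) = \varphi_F^{\mathrm{M}}\bigl(\tilde{\mu} \cdot y_F\bigr) = \varphi_F^{\mathrm{M}}(\tilde{\mu}) \cdot \varphi_F^{\mathrm{M}}(y_F) = \mu \cdot \varphi_F^{\mathrm{M}}(y_F),
\]
and $\varphi_F^{\mathrm{M}}(y_F) = \prod_{i \in I} \varphi_F^{\mathrm{M}}(y_i) = 0$ because $I \subseteq F$ and Proposition \ref{DefinitionYPull} gives $\varphi_F^{\mathrm{M}}(y_i) = 0$ for $i \in F$. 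The only nontrivial obstacle in this plan is the initial toric setup — correctly identifying the Gysin pushforward as a map $\mathrm{CH}(\mathrm{M}_F) \to \mathrm{CH}(\mathrm{M})$ and invoking the projection formula and the $\iota_*\iota^*$ identity — after which every remaining step is a one-line manipulation.
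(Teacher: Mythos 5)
Your proposal is correct and matches the paper's (implicit) argument: the paper declares this proposition ``straightforward'' precisely because it follows from the general toric facts set up at the start of Section \ref{sec:gysin} --- the projection formula making $\iota_*$ a $\mathrm{CH}(\Sigma)$-module map, the identity $\iota_*\iota^* = (\cdot\, x_{\sigma})$ with $x_{\sigma_I} = \prod_{i\in I} y_i = y_F$, and the explicit pullback formulas of Proposition \ref{DefinitionYPull} --- which is exactly what you use. Your uniqueness argument via surjectivity of $\varphi_F^{\mathrm{M}}$ (so that $\mathrm{CH}(\mathrm{M}_F)$ is cyclic over $\mathrm{CH}(\mathrm{M})$) and the vanishing $\varphi_F^{\mathrm{M}}(y_F)=0$ for the second composition are the intended one-line verifications.
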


% It is straightforward to check that the explicit descriptions of $\varphi_F^\mathrm{M}$ and $\psi_F^\mathrm{M}$  in Definition \ref{DefinitionYPullPush} agree with 
% the intersection theoretic definitions of $\varphi_F^\mathrm{M}$ and $\psi_F^\mathrm{M}$.
Proposition \ref{DefinitionYPush} shows that
 the pushforward $\psi_F^\mathrm{M}$ commutes with the degree maps: %in Definition \ref{DefinitionDegreemap}:
 \[
 \deg_{\mathrm{M}_F}=\deg_\mathrm{M} \circ\ \psi_F^\mathrm{M}.
 \]
% We show that the graded $\mathrm{CH}(\mathrm{M})$-module $\mathrm{CH}(\mathrm{M}_F)[-k]$ is isomorphic to the principal ideal of $y_F$ in $\mathrm{CH}(\mathrm{M})$.

%Suppose for the moment that, by induction on $n$, Poincar\'e duality for $\underline{\mathrm{CH}}$ and $\mathrm{CH}$ in Theorem \ref{TheoremChowKahlerPackage} are known to hold.

\begin{proposition}\label{psi-injective}
If  $\mathrm{CH}(\mathrm{M}_F)$ satisfies the Poincar\'e duality part of Theorem \ref{TheoremChowKahlerPackage},
%then the $\mathrm{CH}(\mathrm{M})$-module homomorphism 
then $\psi_F^\mathrm{M}$ is injective.
\end{proposition}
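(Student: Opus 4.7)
The plan is to mimic the proof of Proposition~\ref{PropositionPushforwardI} verbatim, substituting the role of $x_F$ with $y_F$ and the role of $\varphi^F_{\mathrm{M}}$, $\psi^F_{\mathrm{M}}$ with $\varphi_F^{\mathrm{M}}$, $\psi_F^{\mathrm{M}}$. The three ingredients we need are: (i) Poincar\'e duality for $\mathrm{CH}(\mathrm{M}_F)$ in order to detect a nonzero element by pairing; (ii) surjectivity of the pullback $\varphi_F^{\mathrm{M}}\co\mathrm{CH}(\mathrm{M}) \to \mathrm{CH}(\mathrm{M}_F)$, which holds because $\varphi_F^{\mathrm{M}}$ is the pullback to a torus orbit closure in the smooth toric variety $X_{\Pi_{\mathrm{M}}}$ (equivalently, by the explicit generator-wise description in Proposition~\ref{DefinitionYPull}, every generator $x_{G\setminus F}$ of $\mathrm{CH}(\mathrm{M}_F)$ is hit by $x_G$); and (iii) the fact that $\psi_F^{\mathrm{M}}$ is a $\mathrm{CH}(\mathrm{M})$-module homomorphism commuting with the degree maps, $\deg_{\mathrm{M}_F} = \deg_{\mathrm{M}} \circ \psi_F^{\mathrm{M}}$, both established in Proposition~\ref{DefinitionYPush}.

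The argument will go by contradiction. Suppose $\eta \in \mathrm{CH}(\mathrm{M}_F)$ is a nonzero element with $\psi_F^{\mathrm{M}}(\eta) = 0$. By Poincar\'e duality for $\mathrm{CH}(\mathrm{M}_F)$, there exists $\nu \in \mathrm{CH}(\mathrm{M}_F)$ (of complementary degree) such that $\deg_{\mathrm{M}_F}(\nu \eta) = 1$. By surjectivity of $\varphi_F^{\mathrm{M}}$, we may lift $\nu$ to some $\mu \in \mathrm{CH}(\mathrm{M})$ with $\varphi_F^{\mathrm{M}}(\mu) = \nu$. Then
\[
1 = \deg_{\mathrm{M}_F}(\nu \eta) = \deg_{\mathrm{M}_F}\bigl(\varphi_F^{\mathrm{M}}(\mu) \eta\bigr) = \deg_{\mathrm{M}}\bigl(\psi_F^{\mathrm{M}}(\varphi_F^{\mathrm{M}}(\mu)\eta)\bigr) = \deg_{\mathrm{M}}\bigl(\mu\, \psi_F^{\mathrm{M}}(\eta)\bigr) = \deg_{\mathrm{M}}(0) = 0,
\]
where the second equality uses that $\psi_F^{\mathrm{M}}$ commutes with degrees, and the third uses the projection formula, i.e., that $\psi_F^{\mathrm{M}}$ is $\mathrm{CH}(\mathrm{M})$-linear. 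This contradiction forces $\eta = 0$, completing the proof.

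There is essentially no obstacle here, since Propositions~\ref{DefinitionYPull} and \ref{DefinitionYPush} already package the two key structural facts we need. The only point meriting a line of verification is the surjectivity of $\varphi_F^{\mathrm{M}}$, which follows either from the general fact that the pullback to a torus-invariant subvariety in a smooth toric variety is surjective on Chow rings, or directly from Proposition~\ref{DefinitionYPull}: the generators of $\mathrm{CH}(\mathrm{M}_F)$ are the classes $x_{G'}$ for proper flats $G'$ of $\mathrm{M}_F$, and any such $G'$ corresponds to a flat $G = G' \cup F$ of $\mathrm{M}$ properly containing $F$, whence $\varphi_F^{\mathrm{M}}(x_G) = x_{G'}$. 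Note that, in contrast to the proof of Proposition~\ref{PropositionPushforwardI}, the vanishing $\varphi_F^{\mathrm{M}} \circ \psi_F^{\mathrm{M}} = 0$ plays no role: what we use is the projection formula together with the degree-preservation of $\psi_F^{\mathrm{M}}$.
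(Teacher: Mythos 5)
Your proof is correct and is precisely the argument the paper intends: the paper's proof of this proposition simply says it is essentially identical to that of Proposition \ref{PropositionPushforwardI}, which is exactly the adaptation you carry out (using Poincar\'e duality for $\mathrm{CH}(\mathrm{M}_F)$, surjectivity of the pullback $\varphi_F^{\mathrm{M}}$, and the fact that $\psi_F^{\mathrm{M}}$ is a degree-preserving $\mathrm{CH}(\mathrm{M})$-module map). Your closing observation that the vanishing of $\varphi_F^{\mathrm{M}}\circ\psi_F^{\mathrm{M}}$ is irrelevant here is also accurate.
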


In other words, assuming Poincar\'e duality for the Chow rings,
 the graded $\mathrm{CH}(\mathrm{M})$-module $ \mathrm{CH}(\mathrm{M}_F) [-\text{rk}_\mathrm{M}(F)]$ 
is isomorphic to  the principal ideal of $y_F$ in $\mathrm{CH}(\mathrm{M})$.

\begin{proof}
The proof is essentially identical to that of Proposition \ref{PropositionPushforwardI}.
\end{proof}

The basic properties of the pullback and the pushforward maps can be used to describe the fundamental classes
of $\underline{\CH}(\mathrm{M})$ and $\CH(\mathrm{M})$ in terms of $\underline{\alpha}_\mathrm{M}$ and $\alpha_\mathrm{M}$.

\begin{proposition}\label{PropositionAlphaDegree}
The degree of $\underline{\alpha}^{d-1}_\mathrm{M}$ is $1$, and the degree of $\alpha^d_\mathrm{M}$ is $1$.
\end{proposition}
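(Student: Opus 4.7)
My plan is to reduce the augmented statement $\deg_{\mathrm{M}}(\alpha_{\mathrm{M}}^d) = 1$ to the non-augmented statement $\underline{\deg}_{\mathrm{M}}(\underline{\alpha}_{\mathrm{M}}^{d-1}) = 1$ using the pullback/pushforward pair at the ray $\rho_{\varnothing}$, and then to prove the non-augmented statement by induction on $d$ using the pullback/pushforward pair at a rank one flat. The engine of both steps is the identity $\psi^F_\mathrm{M} \circ \varphi^F_\mathrm{M} = \text{multiplication by } x_F$ combined with the formula $\varphi^F_\mathrm{M}(\alpha_\mathrm{M}) = \underline{\alpha}_{\mathrm{M}_F} \otimes 1$ from Proposition \ref{DefinitionXPullback}.

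For the reduction, I first observe that for every nonempty proper flat $F$ one has $\varphi^F_\mathrm{M}(\alpha_\mathrm{M}^{d-1}) = \underline{\alpha}_{\mathrm{M}_F}^{d-1} \otimes 1 = 0$, since $\mathrm{M}_F$ has rank at most $d-1$ and hence $\underline{\mathrm{CH}}^{d-1}(\mathrm{M}_F) = 0$ by Proposition \ref{PropositionToricInterpretation}. Multiplying by $x_F$ gives $x_F \cdot \alpha_\mathrm{M}^{d-1} = 0$, so the expansion $\alpha_\mathrm{M} = \sum_F x_F$ over all proper flats collapses to $\alpha_\mathrm{M}^d = x_\varnothing \cdot \alpha_\mathrm{M}^{d-1}$. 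Applying the same pullback/pushforward identity at $F = \varnothing$---where $\mathrm{M}_\varnothing = \mathrm{M}$ and $\mathrm{M}^\varnothing$ is the rank zero matroid, so $\underline{\mathrm{CH}}(\mathrm{M}_\varnothing) \otimes \mathrm{CH}(\mathrm{M}^\varnothing) = \underline{\mathrm{CH}}(\mathrm{M})$---rewrites this as $\alpha_\mathrm{M}^d = \psi^\varnothing_\mathrm{M}(\underline{\alpha}_\mathrm{M}^{d-1})$, and compatibility of $\psi^\varnothing_\mathrm{M}$ with the degree maps yields $\deg_\mathrm{M}(\alpha_\mathrm{M}^d) = \underline{\deg}_\mathrm{M}(\underline{\alpha}_\mathrm{M}^{d-1})$.

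For the non-augmented claim I induct on $d$. The base case $d = 1$ is the immediate equality $\underline{\deg}_\mathrm{M}(1) = 1$ of Definition \ref{DefinitionDegreemap}. For $d \ge 2$, fix $i \in E$ and let $F = \text{cl}(i)$ be the unique rank one flat containing $i$. Repeating the vanishing argument with the pullback $\underline{\varphi}^G_\mathrm{M}$ of Proposition \ref{DefinitionUnderlinedPull} shows $x_G \cdot \underline{\alpha}_\mathrm{M}^{d-2} = 0$ for every nonempty proper flat $G$ of rank at least two, so that only the summand $G = F$ survives in $\underline{\alpha}_\mathrm{M}^{d-1} = \sum_{G \ni i} x_G \underline{\alpha}_\mathrm{M}^{d-2}$. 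Hence $\underline{\alpha}_\mathrm{M}^{d-1} = x_F \underline{\alpha}_\mathrm{M}^{d-2} = \underline{\psi}^F_\mathrm{M}(\underline{\alpha}_{\mathrm{M}_F}^{d-2} \otimes 1)$ by Proposition \ref{DefinitionUnderlinedPush}, and taking degrees gives
\[
\underline{\deg}_\mathrm{M}(\underline{\alpha}_\mathrm{M}^{d-1}) = \underline{\deg}_{\mathrm{M}_F}(\underline{\alpha}_{\mathrm{M}_F}^{d-2}) \cdot \underline{\deg}_{\mathrm{M}^F}(1) = 1 \cdot 1 = 1,
\]
using the inductive hypothesis on $\mathrm{M}_F$ (of rank $d-1$) and the fact that $\mathrm{M}^F$ has rank one.

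The main obstacle is largely bookkeeping: one must verify that Propositions \ref{DefinitionXPullback}--\ref{DefinitionUnderlinedPush} apply at the degenerate boundary flats $F = \varnothing$ and $F = \text{cl}(i)$, correctly interpreting the trivial factors $\mathrm{M}^\varnothing$ and $\mathrm{M}_F$ that appear. The one substantive input driving the whole argument is the vanishing $\underline{\mathrm{CH}}^{d-1}(\mathrm{M}_F) = 0$ for flats $F$ of positive rank; this is not transparent from the algebraic presentation of $\underline{\mathrm{CH}}$, but follows cleanly from its fan-theoretic realization in Proposition \ref{PropositionToricInterpretation}.
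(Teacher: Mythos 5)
Your proposal is correct and follows essentially the same route as the paper: both arguments kill all terms $x_F\alpha_\mathrm{M}^{d-1}$ (resp.\ $x_G\underline{\alpha}_\mathrm{M}^{d-2}$ for $\mathrm{rk}(G)\ge 2$) via $\varphi^F_\mathrm{M}(\alpha_\mathrm{M})=\underline{\alpha}_{\mathrm{M}_F}\otimes 1$ and the vanishing of $\underline{\CH}(\mathrm{M}_F)$ in top degree, reduce $\alpha_\mathrm{M}^d$ to $\psi^\varnothing_\mathrm{M}(\underline{\alpha}_\mathrm{M}^{d-1})$, and run the same induction through a rank one flat using degree compatibility of the pushforward. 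The only cosmetic difference is that the paper closes the induction by identifying $\underline{\alpha}_\mathrm{M}^{d-1}$ with an explicit flag monomial $x_{\mathscr{F}}$, whereas you invoke the inductive degree computation directly; both are valid.
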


\begin{proof}
We prove  the first statement by induction on  $d \ge 1$.
Note that, for any nonempty proper flat $F$ of rank $k$, we have
\[
x_F\hspace{0.3mm}  \underline{\alpha}_\mathrm{M}^{d-k}= \underline{\psi}^F_\mathrm{M}  \big( \underline{\varphi}^F_\mathrm{M} ( \underline{\alpha}_\mathrm{M}^{d-k} )\big)
= \underline{\psi}^F_\mathrm{M} \big(  \underline{\alpha}_\mathrm{M_F}^{d-k} \otimes 1 \big)=0,
\]
since $\underline{\mathrm{\CH}}^{d-k}(\mathrm{M}_F) = 0$.  Therefore, for any proper flat $a$ of rank $1$ and any element $i$ in $a$, we have
\[
\underline{\alpha}_\mathrm{M}^{d-1}= \Big(\sum_{i \in F} x_F\Big)\underline{\alpha}_\mathrm{M}^{d-2} =x_a\hspace{0.3mm}  \underline{\alpha}_\mathrm{M}^{d-2}.
\]
Now, using the induction hypothesis applied to the matroid $\mathrm{M}_a$ of rank $d-1$, we get
\[
\underline{\alpha}_\mathrm{M}^{d-1}=x_a\hspace{0.3mm}  \underline{\alpha}_\mathrm{M}^{d-2}
= \underline{\psi}^a_\mathrm{M}  \big( \underline{\varphi}^a_\mathrm{M} ( \underline{\alpha}_\mathrm{M}^{d-2} )\big)
= \underline{\psi}^a_\mathrm{M} \big(  \underline{\alpha}_{\mathrm{M}_a}^{d-2} \otimes 1 \big)
=x_\mathscr{F},
\]
where $\mathscr{F}$ is any maximal flag of nonempty proper flats of $\mathrm{M}$ that starts from $a$.

For the second statement,
note that, for any proper flat $F$ of rank $k$, 
\[
x_F\hspace{0.3mm}  \alpha_\mathrm{M}^{d-k}=\psi^F_\mathrm{M}  \big(\varphi^F_\mathrm{M} ( \alpha_\mathrm{M}^{d-k} )\big)
=\psi^F_\mathrm{M} \big(  \underline{\alpha}_\mathrm{M_F}^{d-k} \otimes 1 \big)=0.
\]
Using the first statement, we get the conclusion from the identity
\[
\alpha_\mathrm{M}^d=
 \Big(  \sum_F x_F \Big) \alpha_\mathrm{M}^{d-1}=x_\varnothing \hspace{0.3mm} \alpha_\mathrm{M}^{d-1}=
 \psi^\varnothing_\mathrm{M} \big( \varphi^\varnothing_\mathrm{M} ( \alpha_\mathrm{M}^{d-1} )\big)
=\psi^\varnothing_\mathrm{M} \big(\underline{\alpha}_\mathrm{M}^{d-1}\big). \qedhere
%=x_\mathscr{F},
\]
%where $\mathscr{F}$ is any maximal flag of proper flats  of $\mathrm{M}$.
\end{proof}

More generally, the degree of $\underline{\alpha}_\mathrm{M}^{d-k} \underline{\beta}_\mathrm{M}^k$ is the $k$-th coefficient of the reduced characteristic polynomial of $\mathrm{M}$ \cite[Proposition 9.5]{AHK}.

\begin{remark}
In the setting of Remark \ref{remark:wonderful},
%the element $\underline{\alpha}_\mathrm{M}$, viewed as an element of the Chow ring of the wonderful variety $\underline{X}_V$,
%is the class of the pullback  of the hyperplane 
%$H_a \subseteq \mathbb{P}(V)$,
%where $a$ is any rank $1$ flat of $\mathrm{M}$.
the element $\alpha_\mathrm{M}$, viewed as an element of the Chow ring of the augmented wonderful variety $X_V$,
is the class of the pullback  of the hyperplane  $\mathbb{P}(V) \subseteq \mathbb{P}(V \oplus \mathbb{F})$.
\end{remark}

\section{Proofs of the semi-small decompositions and the Poincar\'e duality theorems}\label{Section3}

In this section, we prove Theorems \ref{TheoremUnderlinedDecomposition} and \ref{TheoremDecomposition} together with the two Poincar\'e duality statements in Theorem \ref{TheoremChowKahlerPackage}.
% , by induction on the cardinality of $E$.
% If $E$ is empty, then Theorems \ref{TheoremUnderlinedDecomposition} and \ref{TheoremDecomposition} and part (1) of 
% Theorem \ref{TheoremChowKahlerPackage}
% are vacuous, while part (4) of Theorem \ref{TheoremChowKahlerPackage} is trivial.  Furthermore, all of these results are trivial when $E$
% is a singleton.  Thus we may assume throughout the section that $i$ is an element of $E$, $E\setminus i$ is nonempty,
% and all the results hold for matroids whose ground set is a proper subset of $E$.
For an element $i$ of $E$, we write  $\pi_i$ and $\underline{\pi}_i$ for the coordinate projections
\[
\pi_i: \mathbb{R}^E \longrightarrow \mathbb{R}^{E \setminus i} \ \ \text{and} \ \ 
\underline{\pi}_i: \mathbb{R}^E / \langle \mathbf{e}_E \rangle\longrightarrow \mathbb{R}^{E \setminus i}/\langle \mathbf{e}_{E\setminus i} \rangle.
\]
Note that $\pi_i(\rho_i)=0$ and $\underline{\pi}_i(\underline{\rho}_{\{i\}})=0$.
In addition, $\pi_i(\rho_S)=\rho_{S \setminus i}$
and $\underline{\pi}_i(\underline{\rho}_S)=\underline{\rho}_{S \setminus i}$ for $S \subseteq E$.
% from $\mathbb{R}^E$ to $\mathbb{R}^{E \setminus i}$, %given by
%\[
%\pi_i:\mathbb{R}^{E} \longrightarrow \mathbb{R}^{E \setminus i}, \qquad  %\mathbf{e}_j \longmapsto\begin{cases} \hfil 0 & \text{if $j =i$,} \\  \hfil \mathbf{e}_j & \text{if $j \neq i$,} \end{cases}
%\]
%\[
%\pi_i(\mathbf{e}_j)=\mathbf{e}_j \ \ \text{for $j \neq i$} \ \ \text{and} \ \ \pi_i(\mathbf{e}_j)=0 \ \ \text{for $j = i$,}
%\]
%The linear projection $\pi_i$ induces the linear map
%and let $\underline{\pi}_i$ be the induced linear map from $ \mathbb{R}^E / \langle \mathbf{e}_E \rangle$ to $ \mathbb{R}^{E \setminus i}/\langle \mathbf{e}_{E\setminus i} \rangle$.
%\[
%\underline{\pi}_i: \mathbb{R}^E / \langle \mathbf{e}_E \rangle\longrightarrow \mathbb{R}^{E \setminus i}/\langle \mathbf{e}_{E\setminus i} \rangle, \qquad \mathbf{e}_j \longmapsto \pi_i(\mathbf{e}_j).
%\]

\begin{proposition}\label{PropositionConeToCone}
%The projection $\pi_i$ induces a map of fans from  $\Pi_\mathrm{M}$ to $\Pi_{\mathrm{M} \setminus i}$, and the projection $\underline{\pi}_i$ induces a map of fans from $\underline{\Pi}_\mathrm{M}$ to $\underline{\Pi}_{\mathrm{M} \setminus i}$. 
%Let $\pi_i$ and $\underline{\pi}_i$ for the coordinate projections
%\[
%\pi_i: \mathbb{R}^E \longrightarrow \mathbb{R}^{E \setminus i} \ \ \text{and} \ \ 
%\underline{\pi}_i: \mathbb{R}^E / \langle \mathbf{e}_E \rangle\longrightarrow \mathbb{R}^{E \setminus i}/\langle \mathbf{e}_{E\setminus i} \rangle.
%\]
Let $\mathrm{M}$ be a loopless matroid on $E$, and let $i$ be an element of $E$.
\begin{enumerate}[(1)]\itemsep 5pt
\item The projection $\pi_i$ maps any cone of   $\Pi_\mathrm{M}$ onto a cone of  $\Pi_{\mathrm{M} \setminus i}$.
\item  The projection $\underline{\pi}_i$ maps any cone of  $\underline{\Pi}_\mathrm{M}$ onto a cone of  $\underline{\Pi}_{\mathrm{M} \setminus i}$. 
\end{enumerate}
\end{proposition}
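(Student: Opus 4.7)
The plan is to verify both statements directly from the explicit description of the cones in Definition \ref{DefinitionAugmentedBergmanFan} by computing the image of each ray generator under the projection and identifying the resulting cone as one of the target fan. The only delicate point will be bookkeeping the generators that collapse to zero.

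For part (1), I would start with a cone $\sigma_{I \le \mathscr{F}}$ of $\Pi_\mathrm{M}$, spanned by the vectors $\mathbf{e}_j$ for $j \in I$ and $-\mathbf{e}_{E \setminus F}$ for $F \in \mathscr{F}$. Under $\pi_i$, the vector $\mathbf{e}_j$ is fixed when $j \neq i$ and maps to $0$ when $j = i$, while $-\mathbf{e}_{E \setminus F}$ maps to $-\mathbf{e}_{(E\setminus i) \setminus (F\setminus i)}$ when $F \neq E \setminus i$ and to $\pi_i(-\mathbf{e}_{\{i\}}) = 0$ when $F = E \setminus i$. Set $I' := I \setminus i$ and let $\mathscr{F}'$ be the set of distinct elements of $\{F \setminus i \mid F \in \mathscr{F},\ F \neq E \setminus i\}$. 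Then $I'$ is independent in $\mathrm{M}\setminus i$, each $F \setminus i$ is a proper flat of $\mathrm{M}\setminus i$ (since $F \neq E,\, E\setminus i$), the set $\mathscr{F}'$ is a chain because $F_1 \subsetneq F_2$ implies $F_1 \setminus i \subseteq F_2 \setminus i$, and compatibility $I' \le \mathscr{F}'$ follows from $I \subseteq F$. The image of the cone is then exactly the set of non-negative combinations of the non-zero images of its generators, which is $\sigma_{I' \le \mathscr{F}'}$, a cone of $\Pi_{\mathrm{M}\setminus i}$.

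For part (2), I would run the parallel argument with the rays $\mathbf{e}_F$ (mod $\mathbf{e}_E$) of $\underline{\Pi}_\mathrm{M}$. Now $\underline{\pi}_i(\mathbf{e}_F) = \mathbf{e}_{F\setminus i}$ modulo $\mathbf{e}_{E\setminus i}$, which is zero precisely when $F = \{i\}$ (so $F \setminus i = \varnothing$) or $F = E \setminus i$ (so $F \setminus i = E \setminus i$), and otherwise equals a primitive generator of the ray $\underline{\rho}_{F\setminus i}$ of $\underline{\Pi}_{\mathrm{M}\setminus i}$. Passing to the chain of distinct elements of $\{F \setminus i \mid F \in \mathscr{F},\ F \neq \{i\},\, E\setminus i\}$ gives a flag $\mathscr{F}'$ of nonempty proper flats of $\mathrm{M}\setminus i$, and the image of $\underline{\sigma}_\mathscr{F}$ is exactly $\underline{\sigma}_{\mathscr{F}'}$.

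The only point that needs genuine attention is confirming surjectivity onto $\sigma_{I'\le\mathscr{F}'}$ (not merely containment in it), which however is immediate once one observes that a cone of the form $\sigma_{I'\le\mathscr{F}'}$ is by definition the non-negative cone on the generator set, and that set is exactly the image of the generator set of $\sigma_{I\le\mathscr{F}}$ after discarding the zero vectors. I expect no other obstacles; the proof is essentially a careful case analysis of which generators project to zero, combined with the elementary facts that flats of $\mathrm{M}\setminus i$ are the sets $F\setminus i$ for $F\in\mathscr{L}(\mathrm{M})$ and that independence and compatibility are preserved under the deletion.
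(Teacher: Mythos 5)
Your proof is correct and follows the same route as the paper's, which simply asserts that $\pi_i$ maps $\sigma_{I\le\mathscr{F}}$ onto $\sigma_{I\setminus i\le\mathscr{F}\setminus i}$ and $\underline{\pi}_i$ maps $\underline{\sigma}_{\mathscr{F}}$ onto $\underline{\sigma}_{\mathscr{F}\setminus i}$; you have merely filled in the bookkeeping of which ray generators collapse to zero ($\mathbf{e}_i$, and the generators attached to $F=E\setminus i$ in case (1), resp.\ $F=\{i\}$ or $F=E\setminus i$ in case (2)) and which coincide after projection.
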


Recall that a linear map defines a morphism of fans $\Sigma_1 \to \Sigma_2$ if it maps any cone of $\Sigma_1$ into a cone of $\Sigma_2$ \cite[Chapter 3]{CLS}.
Thus the above proposition is  stronger than the statement that 
$\pi_i$ and $\underline{\pi}_i$ induce morphisms of fans. %$\pi_i$ induces a morphism of fans $\Pi_\mathrm{M} \to \Pi_{\mathrm{M} \setminus i}$ and that $\underline{\pi}_i$ induces a morphism of fans $\underline{\Pi}_\mathrm{M} \to \underline{\Pi}_{\mathrm{M} \setminus i}$.

\begin{proof}
The projection $\pi_i$ maps $\sigma_{I \le \mathscr{F}}$ onto $\sigma_{I \setminus i \le \mathscr{F} \setminus i}$, where $\mathscr{F} \setminus i$ is the flag of flats of $\mathrm{M}\setminus i$
obtained by removing $i$ from the members of $\mathscr{F}$. Similarly, $\underline{\pi}_i$ maps $\underline{\sigma}_{\mathscr{F}}$  onto  $\underline{\sigma}_{\mathscr{F}\setminus i}$. 
\end{proof}
\noindent

%Let $ \underline{S}\coloneq S\setminus \{\varnothing\}$ and $\underline{S}^+\coloneq S^+\setminus \{i\}$.

%Therefore, $\pi_i$ defines a map between the toric varieties $X_{\Pi_\mathrm{M}} \to X_{ \Pi_{\mathrm{M} \setminus i}}$
By Proposition \ref{PropositionConeToCone},
the projection $\pi_i$ defines a map from the toric variety $X_\mathrm{M}$ of $\Pi_{\mathrm{M}}$ to the toric variety $X_{\mathrm{M} \setminus i}$ of $\Pi_{ {\mathrm{M} \setminus i}}$,
and hence the pullback homomorphism $ \mathrm{CH}(\mathrm{M}\setminus i)  \to \mathrm{CH}(\mathrm{M})$.
%induces the graded algebra homomorphism
Explicitly, the pullback is the graded algebra homomorphism
\[
\theta_i=\theta^\mathrm{M}_i : \mathrm{CH}(\mathrm{M}\setminus i) \longrightarrow \mathrm{CH}(\mathrm{M}), \qquad x_F\longmapsto x_F + x_{F\cup i},
\]
%defined in Section \ref{SectionIntroduction},
where a variable in the target is set to zero if its label is not a flat of $\mathrm{M}$.
%Here and for the rest of this section, our convention is that $x_T = 0$, if $T$ is not a flat of $\mathrm{M}$.  
Similarly,  $\underline{\pi}_i$ defines a map from the toric variety $\underline{X}_\mathrm{M}$ of $\underline{\Pi}_{\mathrm{M}}$ to the toric variety $\underline{X}_{\mathrm{M} \setminus i}$ of $\underline{\Pi}_{ {\mathrm{M} \setminus i}}$,
and hence an algebra homomorphism
\[
\underline{\theta}_i =\underline{\theta}^\mathrm{M}_i : \underline{\mathrm{CH}}(\mathrm{M}\setminus i) \longrightarrow \underline{\mathrm{CH}}(\mathrm{M}), \qquad x_F\longmapsto x_F + x_{F\cup i},
\]
where a variable in the target is set to zero if its label is not a flat of $\mathrm{M}$.

\begin{remark}
We use the notations introduced in Remark \ref{remark:wonderful}.
%Let $\mathrm{M}$ be a loopless matroid on $E$ represented by a subspace $V \subseteq \mathbb{F}^E$, and 
%Let $i$ be an element of $E$ that is not a coloop of $\mathrm{M}$, and 
Let $V \setminus i$ be the image of $V$ under the $i$-th projection
$\mathbb{F}^E \to \mathbb{F}^{E \setminus i}$.
%The induced projection  $\underline{\mathbb{T}}_E \to  \underline{\mathbb{T}}_{E \setminus i}$ uniquely extends to the map of permutohedral toric varieties
%\[
%\underline{\mathrm{p}}^\mathrm{B}_i:  $\underline{X}_\mathrm{B} \to \underline{X}_{\mathrm{B} \setminus i}$.
%\]
We have the commutative diagrams of wonderful varieties and their Chow rings
\[
\begin{tikzcd}[column sep=10mm, row sep=10mm]
\underline{X}_\mathrm{B}  \arrow[d, two heads, "\underline{p}_i^\mathrm{B}"'] & \underline{X}_V \arrow[d, two heads, "\underline{p}_i^V"]  \arrow[l, hook']\\
\underline{X}_{\mathrm{B} \setminus i} & \underline{X}_{V \setminus i}, \arrow[l, hook']
\end{tikzcd}
\qquad  \qquad
\begin{tikzcd}[column sep=10mm, row sep=10mm]
\underline{\CH}(\mathrm{B}) \arrow[r, two heads]& \underline{\CH}(\mathrm{M})  \\
\underline{\CH}({\mathrm{B} \setminus i}) \arrow[u, hook]\arrow[r, two heads]& \underline{\CH}(\mathrm{M} \setminus i). \arrow[u, hook]
\end{tikzcd}
\]
The map  $\underline{p}_i^V$ is birational if and only if $i$ is not a coloop of $\mathrm{M}$.
By Proposition \ref{PropositionConeToCone}, the fibers of $\underline{p}_i^\mathrm{B}$ are at most one-dimensional,
and hence the fibers of $\underline{p}_i^V$ are at most one-dimensional.
It follows that  $\underline{p}_i^V$ is semi-small in the sense of Goresky--MacPherson when $i$ is not a coloop of $\mathrm{M}$.

Similarly, we have the  diagrams of augmented wonderful varieties and their Chow rings
\[
\begin{tikzcd}[column sep=10mm, row sep=10mm]
X_\mathrm{B}  \arrow[d, two heads, "p_i^\mathrm{B}"'] & X_V \arrow[d, two heads, "p_i^V"]  \arrow[l, hook']\\
X_{\mathrm{B} \setminus i} & X_{V \setminus i}, \arrow[l, hook']
\end{tikzcd}
\qquad  \qquad
\begin{tikzcd}[column sep=10mm, row sep=10mm]
\CH(\mathrm{B}) \arrow[r, two heads]& \CH(\mathrm{M})  \\
\CH({\mathrm{B} \setminus i}) \arrow[u, hook]\arrow[r, two heads]& \CH(\mathrm{M} \setminus i). \arrow[u, hook]
\end{tikzcd}
\]
The map  $p_i^V$ is birational if and only if $i$ is not a coloop of $\mathrm{M}$.
By Proposition \ref{PropositionConeToCone}, the fibers of  $p_i^\mathrm{B}$ are at most one-dimensional,
and hence %the fibers of $p_i^V$ are at most one-dimensional. It follows that 
$p_i^V$ is semi-small when $i$ is not a coloop of $\mathrm{M}$.
%Therefore, the fibers of $\underline{p}_V$ are at most one-dimensional.
%In particular, $\underline{p}_V$ is semi-small in the sense of Goresky and MacPherson.

Numerically,
the semi-smallness of $\underline{p}_i^V$  %in the decomposition of $\underline{\CH}(\mathrm{M})$  in Theorem \ref{TheoremUnderlinedDecomposition}
 is reflected in the identity%\footnote{The displayed identities follow from Proposition \ref{lem:top degree vanishing} and the Poincar\'e duality theorems in Theorem \ref{TheoremChowKahlerPackage}.}
\[
\dim  x_{F\cup i}\underline{\mathrm{CH}}^{k-1}_{(i)}  = \dim  x_{F\cup i}\underline{\mathrm{CH}}_{(i)}^{d-k-2}.
%\ \ \text{and} \ \ 
%\dim  x_{F\cup i}\mathrm{CH}^{k-1}_{(i)}  = \dim  x_{F\cup i}\mathrm{CH}_{(i)}^{d-k-1}.
\]
Similarly,
the semi-smallness of $p_i^V$ %in the decomposition of  $\CH(\mathrm{M})$ in Theorem \ref{TheoremDecomposition} 
is reflected in the identity\footnote{The displayed identities follow from Proposition \ref{lem:top degree vanishing} and the Poincar\'e duality parts of Theorem \ref{TheoremChowKahlerPackage}.}
\[
%\dim  x_{F\cup i}\underline{\mathrm{CH}}^{k-1}_{(i)}  = \dim  x_{F\cup i}\underline{\mathrm{CH}}_{(i)}^{d-k-2}
%\ \ \text{and} \ \ 
\dim  x_{F\cup i}\mathrm{CH}^{k-1}_{(i)}  = \dim  x_{F\cup i}\mathrm{CH}_{(i)}^{d-k-1}.
\]
%There are no shifted summands appearing in the decomposition.
%The semi-smallness of $p_V$ is reflected in the decomposition of $\CH(\mathrm{M})$ in Theorem \ref{TheoremDecomposition}.
%The above geometry explains the semi-smallness of the decompositions in Theorem when $\mathrm{M}$ is realizable over a field.
For a detailed discussion of semi-small maps in the context of Hodge theory and the decomposition theorem, see \cite{dCM2} and \cite{dCM1}.
\end{remark}

The element $i$ is said to be a \emph{coloop} of $\mathrm{M}$ if the ranks of $\mathrm{M}$ and $\mathrm{M} \setminus i$ are not equal.
We show that the pullbacks $\theta_i$ and $\underline{\theta}_i$ are compatible with the degree maps of $\mathrm{M}$ and $\mathrm{M} \setminus i$.

\begin{lemma}\label{lemma_PoincareCompatible}
Suppose that $E\setminus i$ is nonempty.
\begin{enumerate}[(1)]\itemsep 5pt
\item If $i$ is not a coloop of $\mathrm{M}$, then $\theta_i$ commutes with the degree maps:
\[
\deg_{\mathrm{M}\setminus i}=\deg_{\mathrm{M}} \circ\ \theta_i. %\and \underline{\deg}_{\mathrm{M}\setminus i}= \underline{\deg}_{\mathrm{M}} \circ\ \underline{\theta}_i.
\]
\item If $i$ is not a coloop of $\mathrm{M}$, then $\underline{\theta}_i$ commutes with the degree maps:
\[
%\deg_{\mathrm{M}\setminus i}=\deg_{\mathrm{M}} \circ\ \theta_i \and 
\underline{\deg}_{\mathrm{M}\setminus i}= \underline{\deg}_{\mathrm{M}} \circ\ \underline{\theta}_i.
\]
\item If $i$ is a coloop of $\mathrm{M}$, we have
\[
\deg_{\mathrm{M}\setminus i}%=\deg_{\mathrm{M}}\big(x_{E\setminus i}\,\theta_i(\omega)\big)
=\deg_{\mathrm{M}} \circ\ x_{E \setminus i} \circ\ \theta_i 
=\deg_{\mathrm{M}} \circ\ \alpha_{\mathrm{M}} \circ\ \theta_i, 
\]
%where $\alpha_{\mathrm{M}} \theta_i$ is the map given by applying $\theta_i$ and then multiplying by $\alpha_{\mathrm{M}}$,
%and $\underline{\alpha}_{\mathrm{M}} \underline{\theta}_i$ is the map given by applying $\underline{\theta}_i$
%and then multiplying by $\underline{\alpha}_{\mathrm{M}}$.
where the middle maps are multiplications by the elements $x_{E \setminus i}$ and $\alpha_\mathrm{M}$.
\item If $i$ is a coloop of $\mathrm{M}$, we have
\[
\underline{\deg}_{\mathrm{M}\setminus i}%=\deg_{\mathrm{M}}\big(x_{E\setminus i}\,\theta_i(\omega)\big)
=\underline{\deg}_{\mathrm{M}} \circ\ x_{E \setminus i} \circ\ \underline{\theta}_i
=\underline{\deg}_{\mathrm{M}} \circ\ \underline{\alpha}_\mathrm{M} \circ\ \underline{\theta}_i,
\]
%where $\alpha_{\mathrm{M}} \theta_i$ is the map given by applying $\theta_i$ and then multiplying by $\alpha_{\mathrm{M}}$,
%and $\underline{\alpha}_{\mathrm{M}} \underline{\theta}_i$ is the map given by applying $\underline{\theta}_i$
%and then multiplying by $\underline{\alpha}_{\mathrm{M}}$.
where the middle maps are multiplications by the elements $x_{E \setminus i}$ and $\underline{\alpha}_\mathrm{M}$.
\end{enumerate}
\end{lemma}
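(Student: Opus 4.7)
The plan is to reduce each identity to a scalar equality on a one-dimensional top Chow group. By Propositions \ref{PropositionUniqueBalancing} and \ref{PropositionToricInterpretation}, the relevant top Chow groups $\mathrm{CH}^d(\mathrm{M})$, $\mathrm{CH}^d(\mathrm{M}\setminus i)$, $\underline{\mathrm{CH}}^{d-1}(\mathrm{M})$, $\underline{\mathrm{CH}}^{d-1}(\mathrm{M}\setminus i)$ (and their coloop analogues, with $\mathrm{M}\setminus i$ having rank $d-1$) are all one-dimensional, with degree map sending the class of any maximal cone to $1$; so it suffices to verify each identity on one well-chosen monomial.

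For the non-coloop parts (1) and (2), I would fix a basis $B=\{b_1,\dots,b_d\}$ of $\mathrm{M}$ with $i\notin B$ and take the maximal flag $\mathscr{F}=(F_1\subsetneq\cdots\subsetneq F_{d-1})$ of nonempty proper flats of $\mathrm{M}\setminus i$ given by $F_j=\mathrm{cl}_{\mathrm{M}\setminus i}(\{b_1,\dots,b_j\})$. Expanding $\underline{\theta}_i(x_\mathscr{F})=\prod_j(x_{F_j}+x_{F_j\cup i})$ in $\underline{\mathrm{CH}}(\mathrm{M})$, terms indexed by non-upward-closed lifts $\mathscr{F}^+\subseteq\mathscr{F}$ vanish by the $\underline{J}_\mathrm{M}$ relations, since if $F\in\mathscr{F}^+$ and $G\in\mathscr{F}\setminus\mathscr{F}^+$ with $F\subsetneq G$ then $F\cup i$ and $G$ are incomparable (as $i\notin G\subseteq E\setminus i$). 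Among upward-closed $\mathscr{F}^+$, exactly one survives: setting $j_0:=\min\{j:F_j\text{ is not a flat of }\mathrm{M}\}$ (taken as $d$ if all $F_j$ are flats), one has $\mathscr{F}^+=\{j\geq j_0\}$. The forced inclusion for $j\geq j_0$ is immediate, while $F_{j_0-1}\cup i$ is shown not to be a flat of $\mathrm{M}$ by observing that the dependence of $\{b_1,\dots,b_{j_0},i\}$ forces $b_{j_0}\in\mathrm{cl}_\mathrm{M}(F_{j_0-1}\cup i)$; when $j_0=d$, the flat $F_{d-1}\cup i$ has rank $d$ and hence closure $E$, so it is not a proper flat. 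The surviving monomial represents a maximal cone of $\underline{\Pi}_\mathrm{M}$ and thus has degree $1$, giving part (2); part (1) follows identically, since $\theta_i$ fixes $y_j$ for $j\neq i$ and one just carries along a compatible factor $y_I$.

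For the coloop parts (3) and (4), the plan hinges on the identity
\[
\underline{\alpha}_\mathrm{M}=\underline{\theta}_i(\underline{\alpha}_{\mathrm{M}\setminus i})+x_{E\setminus i},\qquad \alpha_\mathrm{M}=\theta_i(\alpha_{\mathrm{M}\setminus i})+x_{E\setminus i},
\]
obtained by enumerating the proper flats of $\mathrm{M}$ in the coloop case as the disjoint union of proper flats of $\mathrm{M}\setminus i$, the flat $E\setminus i$, and the flats $G\cup i$ for $G$ a proper flat of $\mathrm{M}\setminus i$. Since $\underline{\alpha}_{\mathrm{M}\setminus i}\cdot\eta=0$ by a degree count for $\eta$ in top degree of $\underline{\mathrm{CH}}(\mathrm{M}\setminus i)$ (and similarly for $\alpha_{\mathrm{M}\setminus i}\cdot\eta$), the $\underline{\alpha}_\mathrm{M}$ identity reduces to the $x_{E\setminus i}$ identity. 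To establish the latter, Proposition \ref{PropositionStars} identifies $\mathrm{M}^{E\setminus i}=\mathrm{M}\setminus i$ and $\mathrm{M}_{E\setminus i}$ as the rank-$1$ matroid on $\{i\}$ (with trivial Chow ring), and a direct application of Proposition \ref{DefinitionUnderlinedPull} yields $\underline{\varphi}_\mathrm{M}^{E\setminus i}(\underline{\theta}_i(\eta))=1\otimes\eta$, because each summand $x_{F\cup i}$ in $\underline{\theta}_i$ is incomparable to $E\setminus i$ (and so pulls back to $0$) while $x_F$ for $F\subsetneq E\setminus i$ pulls back to $1\otimes x_F$. The push-pull identity $x_{E\setminus i}\cdot\underline{\theta}_i(\eta)=\underline{\psi}_\mathrm{M}^{E\setminus i}(1\otimes\eta)$ combined with the degree-compatibility of $\underline{\psi}_\mathrm{M}^{E\setminus i}$ (Proposition \ref{DefinitionUnderlinedPush}) then delivers $\underline{\deg}_\mathrm{M}(x_{E\setminus i}\cdot\underline{\theta}_i(\eta))=\underline{\deg}_{\mathrm{M}\setminus i}(\eta)$, and the same argument works verbatim for the augmented case using Propositions \ref{DefinitionXPullback} and \ref{DefinitionXPushforward}. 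The main obstacle will be the combinatorial uniqueness of the surviving lift $\mathscr{F}^+$ in the non-coloop case, which requires carefully tracking the flat/non-flat status of each $F_j\cup i$; the coloop case is by comparison clean, falling out of the pullback--pushforward machinery of Section \ref{sec:gysin} once the Gysin-like identity is in hand.
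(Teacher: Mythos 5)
Your argument is correct. The coloop parts (3) and (4) proceed exactly as in the paper: the key points are that $\mathrm{M}\setminus i=\mathrm{M}^{E\setminus i}$, so $\varphi^{E\setminus i}_{\mathrm{M}}\circ\theta_i$ is the identity (your computation $\underline{\varphi}^{E\setminus i}_{\mathrm{M}}(\underline{\theta}_i(\eta))=1\otimes\eta$ is the same fact), combined with the push--pull formula, the degree compatibility of $\psi^{E\setminus i}_{\mathrm{M}}$, and the relation $\theta_i(\alpha_{\mathrm{M}\setminus i})=\alpha_{\mathrm{M}}-x_{E\setminus i}$ together with the vanishing of $\alpha_{\mathrm{M}\setminus i}\cdot\eta$ for degree reasons. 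For the non-coloop parts (1) and (2) you take a genuinely more combinatorial route. Both you and the paper reduce to checking the identity on one element spanning the one-dimensional top graded piece, but the paper picks spanning elements whose images under $\theta_i$, resp.\ $\underline{\theta}_i$, are transparent: for (1) it uses $y_B$ for a basis $B$ of $\mathrm{M}\setminus i$ that is also a basis of $\mathrm{M}$, with $\theta_i(y_B)=y_B$; for (2) it uses $\underline{\alpha}_{\mathrm{M}\setminus i}^{\,d-1}$, with $\underline{\theta}_i(\underline{\alpha}_{\mathrm{M}\setminus i})=\underline{\alpha}_{\mathrm{M}}$ and Proposition \ref{PropositionAlphaDegree} supplying both degrees. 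You instead expand $\underline{\theta}_i(x_{\mathscr{F}})$ for a complete flag monomial and show that exactly one lift survives the $\underline{J}_{\mathrm{M}}$ relations and the flat/non-flat bookkeeping; your identification of the unique surviving lift $\{j\ge j_0\}$ is sound, since ``$F_j$ is not a flat of $\mathrm{M}$'' is upward closed in $j$ (once $i\in\mathrm{cl}_{\mathrm{M}}(F_{j_0})$ it lies in every later closure), and your rank argument shows $F_{j_0-1}\cup i$ is not a flat, which kills every upward-closed lift other than $\{j\ge j_0\}$. The surviving monomial is a complete flag of nonempty proper flats of $\mathrm{M}$ (ranks $1,\dots,j_0-1,j_0,\dots,d-1$), hence has degree $1$. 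This costs more case analysis than the paper's one-line computations but has the merit of exhibiting explicitly which flag of $\mathrm{M}$ the image equals; if you want to shorten part (1), taking $I=B$ with the empty flag reduces it to $\theta_i(y_B)=y_B$, which is precisely the paper's argument.
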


\begin{proof}
If $i$ is not a coloop of $\mathrm{M}$, we may choose a basis $B$ of $\mathrm{M} \setminus i$ that is also a basis of $\mathrm{M}$. 
We have
\[
 \mathrm{CH}^d(\mathrm{M}\setminus i)=\text{span}(y_B) \ \ \text{and} \ \  \mathrm{CH}^d(\mathrm{M})=\text{span}(y_B).
 \]
Since $\theta_i(y_j)=y_j$ for all $j$, the first identity follows.
Similarly, by  Proposition \ref{PropositionAlphaDegree}, %\cite[Proposition 5.8]{AHK}, 
\[
\underline{\mathrm{CH}}^{d-1}(\mathrm{M}\setminus i)=\text{span}(\underline\alpha_{\mathrm{M}\setminus i}^{d-1}) \ \ \text{and} \ \   \underline{\mathrm{CH}}^{d-1}(\mathrm{M})=\text{span}(\underline\alpha_{\mathrm{M}}^{d-1}).
\]
Since  $\underline{\theta}_i(\underline\alpha_{\mathrm{M}\setminus i}) =\underline\alpha_{\mathrm{M}}$ when $i$ is not a coloop, the second identity follows.

Suppose now that $i$ is a coloop of $\mathrm{M}$. 
In this case, %$E \setminus i$ is a flat of $\mathrm{M}$, and 
$\mathrm{M} \setminus i=\mathrm{M}^{E \setminus i}$,
and hence
%Any proper flat of $\mathrm{M}$ comparable to $E \setminus i$ must be contained in $E \setminus i$, we have
\[
\varphi^{E \setminus i}_\mathrm{M} \circ \ \theta_i=\text{identity of $\CH(\mathrm{M} \setminus i)$}
\ \ \text{and} \ \ 
\underline{\varphi}^{E \setminus i}_\mathrm{M} \circ \ \underline{\theta}_i=\text{identity of $\underline{\CH}(\mathrm{M} \setminus i)$}.
\]
Using the compatibility of the pushforward $\psi^{E\setminus i}_\mathrm{M}$ with the degree maps,  we have
\[
\deg_{\mathrm{M} \setminus i}
=\deg_\mathrm{M} \circ\ \psi^{E \setminus i}_\mathrm{M} 
=\deg_\mathrm{M} \circ\ \psi^{E \setminus i}_\mathrm{M} \circ \ \varphi^{E \setminus i}_\mathrm{M} \circ \ \theta_i
=\deg_{\mathrm{M}} \circ\ x_{E \setminus i} \circ\ \theta_i.
\]
Since  $\theta_i(\alpha_{\mathrm{M} \setminus i})=\alpha_\mathrm{M}-x_{E \setminus i}$  when $i$ is a coloop of $\mathrm{M}$, the above implies
\[
\deg_{\mathrm{M} \setminus i}
%=\deg_\mathrm{M} \circ\ \psi^{E \setminus i}_\mathrm{M} 
%=\deg_\mathrm{M} \circ\ \psi^{E \setminus i}_\mathrm{M} \circ \ \varphi^{E \setminus i}_\mathrm{M} \circ \ \theta_i
=\deg_{\mathrm{M}} \circ\ x_{E \setminus i} \circ\ \theta_i
=\deg_{\mathrm{M}} \circ\ \big(\alpha_\mathrm{M} -\theta_i (\alpha_{\mathrm{M} \setminus i})\big) \circ\ \theta_i
=\deg_{\mathrm{M}} \circ\ \alpha_\mathrm{M} \circ\ \theta_i,
\]
The identities for $\underline{\deg}_{\mathrm{M} \setminus i}$ can be obtained in a similar way. 
\end{proof}

%\begin{remark}
%If $E = \{i\}$, the statement $\deg_{\mathrm{M}\setminus i} = \deg_{\mathrm{M}} \circ\ \alpha_{\mathrm{M}} \theta_i$ still holds, but
%the underlined version does not make sense because we have not defined a degree map
%for the Chow ring of the empty matroid.
%\end{remark}

\begin{proposition}\label{DeletionInjection}
If $\mathrm{CH}(\mathrm{M} \setminus i)$ satisfies the Poincar\'e duality part of Theorem \ref{TheoremChowKahlerPackage}, then $\theta_i$ is injective.  Also, if $\underline{\mathrm{CH}}(\mathrm{M} \setminus i)$ satisfies the Poincar\'e duality part of Theorem \ref{TheoremChowKahlerPackage},
then $\underline{\theta}_i$ is injective.
%\begin{enumerate}[(1)]\itemsep 5pt
%\item If $\mathrm{CH}(\mathrm{M} \setminus i)$ satisfies Poincar\'e duality in Theorem \ref{TheoremChowKahlerPackage},
%then $\theta_i$ is injective.
%\item If $\underline{\mathrm{CH}}(\mathrm{M} \setminus i)$ satisfies Poincar\'e duality in Theorem \ref{TheoremChowKahlerPackage},
%then $\underline{\theta}_i$ is injective.
%\end{enumerate}
\end{proposition}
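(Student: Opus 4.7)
The plan is to deduce both injectivity statements from Poincar\'e duality on $\mathrm{CH}(\mathrm{M}\setminus i)$ (respectively $\underline{\mathrm{CH}}(\mathrm{M}\setminus i)$) together with the compatibility between $\theta_i$, $\underline{\theta}_i$ and the degree maps recorded in Lemma \ref{lemma_PoincareCompatible}. The idea is standard: use Poincar\'e duality downstairs to produce, for any nonzero element, a pairing partner whose product has nonzero degree, and then transport this nonvanishing through $\theta_i$.

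Concretely, I would fix a nonzero homogeneous $\eta\in\mathrm{CH}(\mathrm{M}\setminus i)$ and, by the Poincar\'e duality assumption, choose $\mu\in\mathrm{CH}(\mathrm{M}\setminus i)$ of complementary degree with $\deg_{\mathrm{M}\setminus i}(\mu\eta)\neq 0$. If $i$ is not a coloop, Lemma \ref{lemma_PoincareCompatible}(1) gives
\[
0\neq \deg_{\mathrm{M}\setminus i}(\mu\eta)=\deg_{\mathrm{M}}\bigl(\theta_i(\mu\eta)\bigr)=\deg_{\mathrm{M}}\bigl(\theta_i(\mu)\,\theta_i(\eta)\bigr),
\]
using that $\theta_i$ is a ring homomorphism; hence $\theta_i(\eta)\neq 0$. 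If $i$ is a coloop, I would instead invoke Lemma \ref{lemma_PoincareCompatible}(3) to write
\[
0\neq \deg_{\mathrm{M}\setminus i}(\mu\eta)=\deg_{\mathrm{M}}\bigl(x_{E\setminus i}\,\theta_i(\mu)\,\theta_i(\eta)\bigr),
\]
which again forces $\theta_i(\eta)\neq 0$. The same argument, with parts (2) and (4) of Lemma \ref{lemma_PoincareCompatible} in place of (1) and (3), handles $\underline{\theta}_i$.

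A small bookkeeping point is the degenerate case when $E\setminus i$ is empty, which is excluded from the hypotheses of Lemma \ref{lemma_PoincareCompatible}. In that situation $\mathrm{M}\setminus i$ is the rank-zero matroid on the empty set, so $\underline{\mathrm{CH}}(\mathrm{M}\setminus i)=\mathbb{Q}$ in degree zero only and $\mathrm{CH}(\mathrm{M}\setminus i)=\mathbb{Q}$; injectivity of $\theta_i$ and $\underline{\theta}_i$ then amounts to checking that the unit maps to a nonzero element, which is immediate from the definitions.

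I do not expect a serious obstacle here: once Lemma \ref{lemma_PoincareCompatible} is in hand, the argument is a one-line transport of nondegeneracy along a degree-compatible ring map. The only subtlety worth double-checking is that in the coloop case $x_{E\setminus i}\theta_i(\mu)$ does land in the correct total degree so that the composed degree is defined (using that $\deg_\mathrm{M}$ is nonzero only on $\mathrm{CH}^d(\mathrm{M})$), which follows from the degree-shift built into Lemma \ref{lemma_PoincareCompatible}(3)--(4).
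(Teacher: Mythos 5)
Your argument is correct and is essentially the paper's: the paper proves this by declaring it "essentially identical" to the proof of Proposition \ref{PropositionPushforwardI}, which is exactly your transport-of-nondegeneracy argument (pair a nonzero element using Poincar\'e duality downstairs, then push the nonzero degree through the degree-compatible map from Lemma \ref{lemma_PoincareCompatible}, splitting into the coloop and non-coloop cases). Your handling of the degenerate case $E\setminus i=\varnothing$ and the degree bookkeeping in the coloop case are both fine.
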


\begin{proof}
The proof is essentially identical to that of Proposition \ref{PropositionPushforwardI}.
\end{proof}
%\noindent

%Let $\mathrm{CH}_{(i)}$ be the image of $\theta_i=\theta_i^\mathrm{M}$, let $\underline{\mathrm{CH}}_{(i)}$ be the image of $\underline{\theta}_i=\underline{\theta}^\mathrm{M}_i$,
%and set
%\begin{align*}
%\mathscr{S}_i&\coloneqq\big\{F \mid \text{$F$ is a proper subset of $E \setminus i$ that satisfies $F \in \mathscr{L}(\mathrm{M})$ and $F\cup i \in \mathscr{L}(\mathrm{M})$}\big\},
% \\
%\underline{\mathscr{S}}_i&\coloneqq\big\{F \mid \text{$F$ is a nonempty proper subset of $E \setminus i$ that satisfies $F \in \mathscr{L}(\mathrm{M})$ and $F\cup i \in \mathscr{L}(\mathrm{M})$}\big\}.
%\end{align*}
%In this section, we prove the following decompositions of the Chow ring and the augmented Chow ring of $\mathrm{M}$ %$\mathrm{CH}(\mathrm{M})$ and $\underline{\mathrm{CH}}(\mathrm{M})$ 
%(Theorems  \ref{TheoremUnderlinedDecomposition} and \ref{TheoremDecomposition}).
%To simplify notations, let $\mathrm{CH}_{(i)} \subset \mathrm{CH}(\mathrm{M})$ be the image of $\theta_i$ and let $\underline{\mathrm{CH}}_{(i)} \subset \underline{\mathrm{CH}}(\mathrm{M})$ be the image of $\underline\theta_i$.
For a flat $F$ in $\mathscr{S}_i$, %or $F \in \underline{\mathscr{S}}_i$, 
we write $\theta_i^{F \cup i}$  for the pullback map
between the augmented Chow rings obtained from the deletion of $i$ from the localization $\mathrm{M}^{F \cup i}$:
\[
\theta_i^{F \cup i}: \mathrm{CH}(\mathrm{M}^F)\to \mathrm{CH}(\mathrm{M}^{F\cup i}).  
%\and \underline{\theta}_i^{F \cup i}: \underline{\mathrm{CH}}(\mathrm{M}^F)\to \underline{\mathrm{CH}}(\mathrm{M}^{F\cup i}).
\]
Similarly, for a flat $F$ in $\underline{\mathscr{S}}_i$, 
we write $\underline{\theta}^{F\cup i}_i$ for the pullback map between the Chow rings obtained from the deletion of $i$ from the localization $\mathrm{M}^{F \cup i}$:
\[
%\theta_i^{F \cup i}: \mathrm{CH}(\mathrm{M}^F)\to \mathrm{CH}(\mathrm{M}^{F\cup i})  \and 
\underline{\theta}_i^{F \cup i}: \underline{\mathrm{CH}}(\mathrm{M}^F)\to \underline{\mathrm{CH}}(\mathrm{M}^{F\cup i}).
\]
Note that $i$ is  a coloop of $\mathrm{M}^{F \cup i}$ in these cases.
% To simplify notations, we denote
% \[
% \Psi^F: \underline{\mathrm{CH}}(\mathrm{M}_{F\cup i})\otimes \mathrm{CH}(\mathrm{M}^F)\longrightarrow \mathrm{CH}(M),
% \qquad \mu\otimes \nu\longmapsto  \psi_\mathrm{M}^{F\cup i}\big(\mu \otimes \theta_i^{F}(\nu)\big)
% \]
% and
% \[
% \underline\Psi^F: \underline{\mathrm{CH}}(\mathrm{M}_{F\cup i})\otimes \underline{\mathrm{CH}}(\mathrm{M}^F)\longrightarrow \underline{\mathrm{CH}}(M),
% \qquad \mu\otimes \xi\longmapsto  \underline\psi_\mathrm{M}^{F\cup i}\big(\mu \otimes \underline\theta_i^{F}(\xi)\big).
% \]

\begin{proposition}\label{lem:top degree vanishing}
The summands appearing in Theorems  \ref{TheoremUnderlinedDecomposition} and \ref{TheoremDecomposition} can be described as follows.% using $\theta_i^F$ and $\underline{\theta}^F_i$.
\begin{enumerate}[(1)]\itemsep 5pt
\item If $F \in \mathscr{S}_i$, %if $\underline{\mathrm{CH}}(\mathrm{M}_{F\cup i})$ and $\mathrm{CH}(\mathrm{M}^{F\cup i})$ satisfy  the Poincar\'e duality in Theorem \ref{TheoremChowKahlerPackage},
then $x_{F\cup i}\mathrm{CH}_{(i)} = \psi_\mathrm{M}^{F\cup i}\big(\underline{\mathrm{CH}}(\mathrm{M}_{F\cup i}) \otimes \theta_i^{F \cup i}\mathrm{CH}(\mathrm{M}^{F})\big)$.
\item If $F \in \underline{\mathscr{S}}_i$, % if $\underline{\mathrm{CH}}(\mathrm{M}_{F\cup i})$ and $\underline{\mathrm{CH}}(\mathrm{M}_{F\cup i})$ satisfy  the Poincar\'e duality in Theorem \ref{TheoremChowKahlerPackage},
then $x_{F\cup i}\underline{\mathrm{CH}}_{(i)} = \underline{\psi}_\mathrm{M}^{F\cup i}\big(\underline{\mathrm{CH}}(\mathrm{M}_{F\cup i}) \otimes  \underline{\theta}_i^{F \cup i}\underline{\mathrm{CH}}(\mathrm{M}^{F})\big)$.
\item If $i$ is a coloop of $\mathrm{M}$, then %and if $\CH(\mathrm{M} \setminus i)$ satisfies  the Poincar\'e duality in Theorem \ref{TheoremChowKahlerPackage},
$x_{E \setminus i} \CH_{(i)}=\psi^{E \setminus i}_\mathrm{M} \CH(\mathrm{M} \setminus i)$
and
$x_{E \setminus i} \underline{\CH}_{(i)}=\underline{\psi}^{E \setminus i}_\mathrm{M} \underline{\CH}(\mathrm{M} \setminus i)$.
\end{enumerate}
\end{proposition}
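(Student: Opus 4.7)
The strategy is to reduce each of the three statements to identifying the image of $\mathrm{CH}_{(i)}$ under the pullback $\varphi_\mathrm{M}^{F\cup i}$ (or $\underline{\varphi}_\mathrm{M}^{F\cup i}$ in the underlined case, and $\varphi_\mathrm{M}^{E\setminus i}$ in the coloop case). The key tool is that $\psi_\mathrm{M}^{F\cup i}\circ\varphi_\mathrm{M}^{F\cup i}$ is multiplication by $x_{F\cup i}$, by Proposition~\ref{DefinitionXPushforward}. Applying this to $\mathrm{CH}_{(i)}$ yields $x_{F\cup i}\,\mathrm{CH}_{(i)}=\psi_\mathrm{M}^{F\cup i}(\varphi_\mathrm{M}^{F\cup i}(\mathrm{CH}_{(i)}))$, so part~(1) reduces to the equality
\[
\varphi_\mathrm{M}^{F\cup i}(\mathrm{CH}_{(i)})=\underline{\mathrm{CH}}(\mathrm{M}_{F\cup i})\otimes\theta_i^{F\cup i}\mathrm{CH}(\mathrm{M}^F).
\]
Part~(2) follows identically using Proposition~\ref{DefinitionUnderlinedPull} in place of Proposition~\ref{DefinitionXPullback}. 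Part~(3) is immediate: the proof of Lemma~\ref{lemma_PoincareCompatible} already observes that, when $i$ is a coloop of $\mathrm{M}$, $\varphi_\mathrm{M}^{E\setminus i}\circ\theta_i$ is the identity on $\mathrm{CH}(\mathrm{M}\setminus i)=\mathrm{CH}(\mathrm{M}^{E\setminus i})$, so $\varphi_\mathrm{M}^{E\setminus i}(\mathrm{CH}_{(i)})=\mathrm{CH}(\mathrm{M}\setminus i)$, and applying $\psi_\mathrm{M}^{E\setminus i}$ yields the desired equality; the underlined version is identical.

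For the displayed equality, both sides are subrings of $\underline{\mathrm{CH}}(\mathrm{M}_{F\cup i})\otimes\mathrm{CH}(\mathrm{M}^{F\cup i})$, and $\varphi_\mathrm{M}^{F\cup i}\circ\theta_i\colon\mathrm{CH}(\mathrm{M}\setminus i)\to\underline{\mathrm{CH}}(\mathrm{M}_{F\cup i})\otimes\mathrm{CH}(\mathrm{M}^{F\cup i})$ is a graded ring homomorphism. I would verify the equality by checking images on the algebra generators $x_G$ (for $G$ a proper flat of $\mathrm{M}\setminus i$) and $y_j$ (for $j\in E\setminus i$), using the explicit formulas of Proposition~\ref{DefinitionXPullback}. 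A routine case analysis on the relation between $G$ and $F$ (incomparable, strictly below, strictly above, or equal) shows that every image lies in the target subring, and, conversely, that each ring generator of the target subring arises in this way: the generators $x_{H\setminus(F\cup i)}\otimes 1$ of $\underline{\mathrm{CH}}(\mathrm{M}_{F\cup i})$ come from $G=H\setminus i$ with $H\supsetneq F\cup i$; the generators $1\otimes\theta_i^{F\cup i}(x_H)$ come from $G=H\subsetneq F$; and the generators $1\otimes\theta_i^{F\cup i}(y_j)=1\otimes y_j$ come from $y_j$ with $j\in F$.

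The main obstacle is the case $G=F$, in which Proposition~\ref{DefinitionXPullback} gives
\[
\varphi_\mathrm{M}^{F\cup i}(\theta_i(x_F))=1\otimes x_F-1\otimes\alpha_{\mathrm{M}^{F\cup i}}-\underline{\beta}_{\mathrm{M}_{F\cup i}}\otimes 1,
\]
and the term $1\otimes x_F$ is not a priori in $1\otimes\theta_i^{F\cup i}\mathrm{CH}(\mathrm{M}^F)$. To handle this I would use the observation that, whenever $F\in\mathscr{S}_i$, the element $i$ is a coloop of the localization $\mathrm{M}^{F\cup i}$: since $F$ and $F\cup i$ are both flats of $\mathrm{M}$ differing in size by one, their ranks differ by one, so $(\mathrm{M}^{F\cup i})\setminus i=\mathrm{M}^F$ has strictly smaller rank. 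The coloop identity $\theta_i^{F\cup i}(\alpha_{\mathrm{M}^F})=\alpha_{\mathrm{M}^{F\cup i}}-x_F$ (used implicitly in the proof of Lemma~\ref{lemma_PoincareCompatible}) then rewrites the expression above as $-1\otimes\theta_i^{F\cup i}(\alpha_{\mathrm{M}^F})-\underline{\beta}_{\mathrm{M}_{F\cup i}}\otimes 1$, which does lie in the target subring. This coloop observation is the heart of the argument; everything else reduces to the explicit pullback formulas and the composition identity $\psi_\mathrm{M}^{F\cup i}\circ\varphi_\mathrm{M}^{F\cup i}=x_{F\cup i}$.
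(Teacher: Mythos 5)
Your proof is correct, and its skeleton matches the paper's: you make the same reduction (apply $\psi_\mathrm{M}^{F\cup i}\circ\varphi_\mathrm{M}^{F\cup i}=x_{F\cup i}\cdot(-)$ to reduce part (1) to the identity $\varphi_\mathrm{M}^{F\cup i}(\mathrm{CH}_{(i)})=\underline{\mathrm{CH}}(\mathrm{M}_{F\cup i})\otimes\theta_i^{F\cup i}\mathrm{CH}(\mathrm{M}^F)$), and you dispose of part (3) exactly as the paper does, via $\varphi^{E\setminus i}_\mathrm{M}\circ\theta_i=\mathrm{id}$ in the coloop case. Where you diverge is in how the central identity is established. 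The paper argues geometrically: the projection $\pi_i$ carries $\mathrm{star}_{\rho_{F\cup i}}\Pi_\mathrm{M}$ to $\mathrm{star}_{\rho_F}\Pi_{\mathrm{M}\setminus i}$, and the induced map on Chow rings factors as $(1\otimes\theta_i^{F\cup i})\circ q\circ\varphi^F_{\mathrm{M}\setminus i}$ with $q$ and $\varphi^F_{\mathrm{M}\setminus i}$ surjective (the key input being that $(\mathrm{M}/i)_F=\mathrm{M}_{F\cup i}$ is a quotient of $(\mathrm{M}\setminus i)_F$, so the relevant Bergman fan is a subfan and the pullback $q$ is onto). Both containments then drop out at once from this commutative square. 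You instead verify both containments by hand on ring generators, which works because both sides are images of ring homomorphisms; your case analysis on the position of $G$ relative to $F$ is the correct one, and you correctly isolate the only nontrivial case $G=F$, where $\varphi_\mathrm{M}^{F\cup i}(\theta_i(x_F))=1\otimes x_F-1\otimes\alpha_{\mathrm{M}^{F\cup i}}-\underline{\beta}_{\mathrm{M}_{F\cup i}}\otimes 1$ is absorbed into the target via the coloop identity $\theta_i^{F\cup i}(\alpha_{\mathrm{M}^F})=\alpha_{\mathrm{M}^{F\cup i}}-x_F$. That coloop observation is precisely what the paper's fan factorization encodes implicitly, so your argument makes explicit a computation the paper hides; the trade-off is that the paper's route gets surjectivity for free and avoids the generator-by-generator bookkeeping, while yours avoids any appeal to fan morphisms and to the quotient relation between $(\mathrm{M}/i)_F$ and $(\mathrm{M}\setminus i)_F$.
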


It follows, assuming Poincar\'e duality for the Chow rings,\footnote{We need Poincar\'e duality for $\uCH(\mathrm{M}^F)$, $\CH(\mathrm{M}^F)$,  $\uCH(\mathrm{M}^{F \cup i})$, $\CH(\mathrm{M}^{F \cup i})$,  and
$\uCH(\mathrm{M}_{F \cup i})$.} that
%the summands appearing in Theorems  \ref{TheoremUnderlinedDecomposition} and \ref{TheoremDecomposition} admit isomorphisms
\[
x_{F\cup i}\mathrm{CH}_{(i)}  \cong \underline{\mathrm{CH}}(\mathrm{M}_{F\cup i}) \otimes \mathrm{CH}(\mathrm{M}^{F})[-1]
\ \ \text{and} \ \ 
x_{F\cup i}\underline{\mathrm{CH}}_{(i)}  \cong \underline{\mathrm{CH}}(\mathrm{M}_{F\cup i}) \otimes \underline{\mathrm{CH}}(\mathrm{M}^{F})[-1].
\]
Therefore, again assuming Poincar\'e duality for the Chow rings,  we have
\[
\dim  x_{F\cup i}\underline{\mathrm{CH}}^{k-1}_{(i)}  = \dim  x_{F\cup i}\underline{\mathrm{CH}}_{(i)}^{d-k-2}
\ \ \text{and} \ \ 
\dim  x_{F\cup i}\mathrm{CH}^{k-1}_{(i)}  = \dim  x_{F\cup i}\mathrm{CH}_{(i)}^{d-k-1}.
\]
%for flats $F$ in $\mathscr{S}_i$ and $\underline{\mathscr{S}}_i$.

%and, if $i$ is a coloop of $\mathrm{M}$,  we have
%\[
%x_{E \setminus i} \CH_{(i)} \cong \CH(\mathrm{M} \setminus i)[-1]
%\ \ \text{and} \ \ 
%x_{E \setminus i} \underline{\CH}_{(i)} \cong \underline{\CH}(\mathrm{M} \setminus i)[-1].
%\]

\begin{proof} 
We  prove the first statement. The proof of the second statement is essentially identical. %follows from the same arguments. 
The third statement is a straightforward consequence of  the fact that  $\varphi^{E \setminus i}_\mathrm{M} \circ \theta_i$ 
and $\underline{\varphi}^{E \setminus i}_\mathrm{M} \circ \underline{\theta}_i$ are the identity maps when $i$ is a coloop.

%Proposition \ref{PropositionPushforwardI} tells us that $\psi^{F\cup i}_{\mathrm{M}}$ is injective.  
Let $F$ be a flat in $\mathscr{S}_i$.
%Since $x_{F\cup i}\mathrm{CH}_{(i)}$ is the image of $\psi^{F\cup i}_{\mathrm{M}}\circ \varphi^{F\cup i}_{\mathrm{M}}\circ \theta_i$,
It is enough to show that 
\[
\varphi^{F\cup i}_{\mathrm{M}} \big(\mathrm{CH}_{(i)}\big) = \underline{\mathrm{CH}}(\mathrm{M}_{F\cup i}) \otimes \theta_i^{F\cup i} \mathrm{CH}(\mathrm{M}^{F}),
\]
since the result will then follow by applying $\psi_{\mathrm{M}}^{F\cup i}$.
The projection $\pi_i$ maps the ray  $\rho_{F\cup i}$ to the ray $\rho_{F}$,
and hence $\pi_i$ defines  morphisms of fans
\[
\begin{tikzcd}[column sep=10mm, row sep=10mm]
\text{star}_{\rho_{F\cup i}} \Pi_\mathrm{M} \arrow[d, "\pi'_i"'] &\arrow[l, "\iota_{F \cup i}"']  \underline{\Pi}_{\mathrm{M}_{F\cup i}}\times \Pi_{\mathrm{M}^{F\cup i}}  \arrow[d, "\pi''_i"'] & \arrow[l, equal]\underline{\Pi}_{(\mathrm{M}/i)_{F}}\times \Pi_{\mathrm{M}^{F\cup i}} \arrow[d, "\pi'''_i"']\\
\text{star}_{\rho_{F}} \Pi_{\mathrm{M}\setminus i} &  \arrow[l, "\iota_F"'] \underline{\Pi}_{(\mathrm{M}\setminus i)_{F}}\times \Pi_{(\mathrm{M} \setminus i)^F} & \arrow[l, equal] \underline{\Pi}_{(\mathrm{M}\setminus i)_{F}}\times \Pi_{\mathrm{M}^F},
\end{tikzcd}
\]
where $\iota_{F \cup i}$ and $\iota_F$ are the isomorphisms in Proposition \ref{PropositionStars}.
The main point is that the matroid $(\mathrm{M}/i)_F$ is a quotient of $(\mathrm{M}\setminus i)_F$.
%Every flat of $(\mathrm{M}/i)_F$ is a flat of $(\mathrm{M}\setminus i)_F$.
In other words, we have the inclusion of Bergman fans
\[
\underline{\Pi}_{(\mathrm{M}/i)_F} \subseteq \underline{\Pi}_{(\mathrm{M}\setminus i)_F}.
\]
Therefore, the morphism $\pi_i'''$ admits the factorization
\[
\begin{tikzcd}[column sep=10mm, row sep=10mm]
\underline{\Pi}_{(\mathrm{M}/i)_{F}}\times \Pi_{\mathrm{M}^{F\cup i}} \arrow[r,two heads] &
\underline{\Pi}_{(\mathrm{M}/i)_{F}}\times \Pi_{\mathrm{M}^{F}}  \arrow[r,hook]&
\underline{\Pi}_{(\mathrm{M}\setminus i)_{F}}\times \Pi_{\mathrm{M}^F},
\end{tikzcd}
\]
where the second map induces a surjective pullback map $q$ between the Chow rings. %from $\underline{\mathrm{CH}}((\mathrm{M}\setminus i)_F)\otimes \mathrm{CH}(\mathrm{M}^F)$ to $\underline{\mathrm{CH}}(\mathrm{M}_{F\cup i})\otimes \mathrm{CH}(\mathrm{M}^F)$.
By the equality $(\mathrm{M}/i)_F = \mathrm{M}_{F \cup i}$, we have the commutative diagram of pullback maps between the Chow rings
\[
\begin{tikzcd}[column sep=10mm, row sep=10mm]
\mathrm{CH}(\mathrm{M}\setminus i) \arrow[rr, "\theta_i"] \arrow[d,two heads, "\varphi^F_{\mathrm{M} \setminus i}"']&& \mathrm{CH}(\mathrm{M})  \arrow[d,two heads, "\varphi^{F \cup i}_\mathrm{M}"]\\
\underline{\mathrm{CH}}((\mathrm{M}\setminus i)_F)\otimes \mathrm{CH}((\mathrm{M} \setminus i)^F) \arrow[r,two heads, "q"]&%\arrow[dr, two heads]&
\underline{\mathrm{CH}}(\mathrm{M}_{F\cup i})\otimes \mathrm{CH}(\mathrm{M}^F) \arrow[r,"1 \otimes \theta^{F \cup i}_i"]%\arrow[ur, "1 \otimes \theta^{F \cup i}_i"']
& \underline{\mathrm{CH}}(\mathrm{M}_{F\cup i})\otimes \mathrm{CH}(\mathrm{M}^{F\cup i}).
\end{tikzcd}
\]
The conclusion follows from the surjectivity of the pullback maps $\varphi^F_{\mathrm{M} \setminus i}$ and $q$.
\end{proof}

\begin{remark}\label{rem:top degree vanishing}
Since  $i$ is a coloop in $\mathrm{M}^{F \cup i}$ when $F\in \mathscr{S}_i$ or $F\in \underline{\mathscr{S}}_i$, 
 %, so $\rk F+\crk F\cup i=d-1$. 
 Proposition \ref{lem:top degree vanishing} implies that
\[
x_{F\cup i}\mathrm{CH}_{(i)}^{d-1}=0\; \text{for $F\in \mathscr{S}_i$} 
\ \ \text{and} \ \ 
 x_{F\cup i}\underline{\mathrm{CH}}_{(i)}^{d-2}=0\; \text{for $F\in \underline{\mathscr{S}}_i$}.
\]
% $x_{F\cup i}\mathrm{CH}_{(i)}$ is zero in degree $d$ and $x_{F\cup i}\underline{\mathrm{CH}}_{(i)}$ is zero in degree $d-1$. 
%Therefore, the sums in Theorems  \ref{TheoremUnderlinedDecomposition} and \ref{TheoremDecomposition}  are direct sums in the top degrees. 
%Proposition \ref{lem:top degree vanishing}, together with the Poincar\'e duality of the Chow rings, implies that
%\[
%\dim  x_{F\cup i}\underline{\mathrm{CH}}^{k-1}_{(i)}  = \dim  x_{F\cup i}\underline{\mathrm{CH}}_{(i)}^{d-k-2}
%\ \ \text{and} \ \ 
%\dim  x_{F\cup i}\mathrm{CH}^{k-1}_{(i)}  = \dim  x_{F\cup i}\mathrm{CH}_{(i)}^{d-k-1}.
%\]
\end{remark}

%We now show that the restriction of the Poincar\'e pairing of $\CH(\mathrm{M})$ to $x_{F \cup i} \CH_{(i)}$
%agrees with the Poincar\'e pairing on $\underline{\CH}(\mathrm{M}_{F \cup i}) \otimes \CH(\mathrm{M}^F)$,
%up to a sign, for $F \in \mathscr{S}_i$.
%Similarly,  the restriction of the Poincar\'e pairing of $\underline{\CH}(\mathrm{M})$ to $x_{F \cup i} \underline{\CH}_{(i)}$
%agrees with the Poincar\'e pairing on $\underline{\CH}(\mathrm{M}_{F \cup i}) \otimes \underline{\CH}(\mathrm{M}^F)$,
%up to a sign, for $F \in \underline{\mathscr{S}}_i$.

\begin{proposition}\label{lemma_bothdegree}
%Suppose that $F$ is a proper subset of $E\setminus i$ such that $F\cup i\in\mathscr{L}(\mathrm{M})$, and suppose that Poincar\'e duality
%holds for the rings $\mathrm{CH}(\mathrm{M}^{F\cup i})$,
%$\underline{\mathrm{CH}}(\mathrm{M}^{F\cup i})$,
% and $\underline{\mathrm{CH}}(\mathrm{M}_{F\cup i})$.
The Poincar\'e pairing on the summands  appearing in Theorems  \ref{TheoremUnderlinedDecomposition} and \ref{TheoremDecomposition} can be described as follows.
\begin{enumerate}[(1)]\itemsep 5pt
%\item If $F\in \mathscr{S}_i$, 
%for any $\mu_1, \mu_2\in \underline{\mathrm{CH}}(\mathrm{M}_{F\cup i})$ and any $\nu_1, \nu_2\in {\mathrm{CH}}(\mathrm{M}^{F})$,
  %of quadratic forms on $\underline{\mathrm{CH}}(\mathrm{M}_{F\cup i})\otimes {\mathrm{CH}}(\mathrm{M}^{F})$, %and the one of $\mathrm{CH}(\mathrm{M})$ are related by the equality
%\begin{equation}\label{equation_deletionpairing}
%\[
%\deg_{\mathrm{M}}\big(\psi^{F\cup i}_{\mathrm{M}}\big(\mu_1\otimes \theta_i^{F \cup i}(\nu_1)\big)\cdot \psi^{F\cup i}_{\mathrm{M}}\big(\mu_2\otimes \theta_i^{F \cup i}(\nu_2)\big) \big)
%=-\underline\deg_{\mathrm{M}_{F\cup i}}(\mu_1\mu_2) \deg_{\mathrm{M}^{F}}(\nu_1 \nu_2).
%\]
\item If $F\in \mathscr{S}_i$, 
then for any $\mu_1, \mu_2\in \underline{\mathrm{CH}}(\mathrm{M}_{F\cup i}) \otimes {\mathrm{CH}}(\mathrm{M}^{F})$ of complementary degrees,
  %of quadratic forms on $\underline{\mathrm{CH}}(\mathrm{M}_{F\cup i})\otimes {\mathrm{CH}}(\mathrm{M}^{F})$, %and the one of $\mathrm{CH}(\mathrm{M})$ are related by the equality
%\begin{equation}\label{equation_deletionpairing}
\[
\deg_{\mathrm{M}}\big(\psi^{F\cup i}_{\mathrm{M}}\big(1 \otimes \theta_i^{F \cup i}(\mu_1)\big)\cdot \psi^{F\cup i}_{\mathrm{M}}\big(1 \otimes \theta_i^{F \cup i}(\mu_2)\big) \big)
=-\underline\deg_{\mathrm{M}_{F\cup i}} \otimes \deg_{\mathrm{M}^{F}}(\mu_1 \mu_2).
\]
%\end{equation}
\item If $F\in \underline{\mathscr{S}}_i$, 
then for any $\nu_1, \nu_2\in \underline{\mathrm{CH}}(\mathrm{M}_{F\cup i}) \otimes \underline{\mathrm{CH}}(\mathrm{M}^{F})$ of complementary degrees,
\[
\underline\deg_{\mathrm{M}}\big(\underline\psi^{F\cup i}_{\mathrm{M}}\big( 1 \otimes  \underline\theta_i^{F \cup i}(\nu_1)\big)\cdot \underline\psi^{F\cup i}_{\mathrm{M}}\big( 1 \otimes  \underline\theta_i^{F \cup i}(\nu_2)\big) \big)
=-\underline\deg_{\mathrm{M}_{F\cup i}} \otimes \underline\deg_{\mathrm{M}^{F}}( \nu_1\nu_2).
\]
\end{enumerate}
\end{proposition}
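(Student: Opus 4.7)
The plan is to prove part~(1) by combining the projection formula for the pushforward $\psi_{\mathrm{M}}^{F\cup i}$ with the explicit form of $\varphi_{\mathrm{M}}^{F\cup i}(x_{F\cup i})$, reducing the left-hand side to two summands in $\underline{\mathrm{CH}}(\mathrm{M}_{F\cup i})\otimes\mathrm{CH}(\mathrm{M}^{F\cup i})$; a quick bidegree count kills one summand, and Lemma~\ref{lemma_PoincareCompatible}(3) applied to the coloop $i\in\mathrm{M}^{F\cup i}$ (where $\mathrm{M}^{F\cup i}\setminus i=\mathrm{M}^F$) will simplify the other to the right-hand side. Part~(2) will then follow by the identical argument with underlined symbols, invoking Lemma~\ref{lemma_PoincareCompatible}(4) in place of (3).

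Concretely, write $\tilde\mu_j\coloneq(\id\otimes\theta_i^{F\cup i})(\mu_j)\in\underline{\mathrm{CH}}(\mathrm{M}_{F\cup i})\otimes\mathrm{CH}(\mathrm{M}^{F\cup i})$, which is what the notation ``$1\otimes\theta_i^{F\cup i}(\mu_j)$'' denotes. By the $\mathrm{CH}(\mathrm{M})$-linearity of $\psi_{\mathrm{M}}^{F\cup i}$ and the identity from Proposition~\ref{DefinitionXPushforward} that $\varphi_{\mathrm{M}}^{F\cup i}\circ\psi_{\mathrm{M}}^{F\cup i}$ is multiplication by $\varphi_{\mathrm{M}}^{F\cup i}(x_{F\cup i})$,
\[
\psi_{\mathrm{M}}^{F\cup i}(\tilde\mu_1)\cdot\psi_{\mathrm{M}}^{F\cup i}(\tilde\mu_2)=\psi_{\mathrm{M}}^{F\cup i}\bigl(\tilde\mu_1\tilde\mu_2\cdot\varphi_{\mathrm{M}}^{F\cup i}(x_{F\cup i})\bigr).
\]
Substituting $\varphi_{\mathrm{M}}^{F\cup i}(x_{F\cup i})=-1\otimes\alpha_{\mathrm{M}^{F\cup i}}-\underline\beta_{\mathrm{M}_{F\cup i}}\otimes 1$ from Proposition~\ref{DefinitionXPullback} and then applying $\deg_{\mathrm{M}}=(\underline\deg_{\mathrm{M}_{F\cup i}}\otimes\deg_{\mathrm{M}^{F\cup i}})\circ\psi_{\mathrm{M}}^{F\cup i}$ splits the expression into two terms.

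By linearity it suffices to consider the case where $\mu_1\mu_2=A\otimes b$ has pure bidegree $(P,Q)$ with $P+Q=d-2$; since $P\le d-\text{rk}_\mathrm{M}(F\cup i)-1$ and $Q\le\text{rk}_\mathrm{M}(F)=\text{rk}_\mathrm{M}(F\cup i)-1$, both inequalities must be equalities. The summand coming from $\underline\beta_{\mathrm{M}_{F\cup i}}\otimes 1$ has first tensor factor $A\cdot\underline\beta_{\mathrm{M}_{F\cup i}}$ of degree $P+1$, exceeding the top degree of $\underline{\mathrm{CH}}(\mathrm{M}_{F\cup i})$, and hence vanishes. The surviving summand equals $-\underline\deg_{\mathrm{M}_{F\cup i}}(A)\cdot\deg_{\mathrm{M}^{F\cup i}}\bigl(\theta_i^{F\cup i}(b)\cdot\alpha_{\mathrm{M}^{F\cup i}}\bigr)$, and Lemma~\ref{lemma_PoincareCompatible}(3), applied with $i$ a coloop of $\mathrm{M}^{F\cup i}$ and deletion $\mathrm{M}^F$, rewrites the second factor as $\deg_{\mathrm{M}^F}(b)$, giving the required $-(\underline\deg_{\mathrm{M}_{F\cup i}}\otimes\deg_{\mathrm{M}^F})(\mu_1\mu_2)$. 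The only real obstacle is this bidegree bookkeeping, which must be done carefully enough to guarantee that exactly one of the two summands survives; once this is in place, part~(2) is a verbatim transcription with $(\psi,\varphi,\theta,\alpha,\deg)$ replaced by their underlined analogues.
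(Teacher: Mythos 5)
Your proposal is correct and follows essentially the same route as the paper's proof: rewrite the product via the projection formula and $\varphi^{F\cup i}_{\mathrm{M}}\circ\psi^{F\cup i}_{\mathrm{M}}=\varphi^{F\cup i}_{\mathrm{M}}(x_{F\cup i})\cdot(-)$, push the degree computation down to $\underline{\deg}_{\mathrm{M}_{F\cup i}}\otimes\deg_{\mathrm{M}^{F\cup i}}$, discard the $\underline{\beta}_{\mathrm{M}_{F\cup i}}\otimes 1$ term by a degree count, and finish with Lemma \ref{lemma_PoincareCompatible}(3) for the coloop $i$ of $\mathrm{M}^{F\cup i}$. The only cosmetic difference is that you kill the $\underline{\beta}$ term by noting the first tensor factor exceeds the top degree of $\underline{\mathrm{CH}}(\mathrm{M}_{F\cup i})$, whereas the paper observes that the second factor, lying in the image of $\theta_i^{F\cup i}$, cannot reach top degree of $\mathrm{CH}(\mathrm{M}^{F\cup i})$; both are valid.
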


It follows, assuming Poincar\'e duality for the Chow rings,\footnote{We need Poincar\'e duality for $\uCH(\mathrm{M}^F)$, $\CH(\mathrm{M}^F)$,  $\uCH(\mathrm{M}^{F \cup i})$, $\CH(\mathrm{M}^{F \cup i})$,  and
$\uCH(\mathrm{M}_{F \cup i})$.} that
the restriction of the Poincar\'e pairing of $\CH(\mathrm{M})$ to the subspace $x_{F \cup i} \CH_{(i)}$ is non-degenerate, 
%is non-degenerate %for $F \in \mathscr{S}_i$.
%Similarly, assuming the Poincar\'e duality for $\uCH(\mathrm{M}_{F \cup i})$ and $\uCH(\mathrm{M}^F)$, 
and the restriction of the Poincar\'e pairing of $\uCH(\mathrm{M})$ to the subspace  $x_{F \cup i} \uCH_{(i)}$
is non-degenerate. %for $F \in \underline{\mathscr{S}}_i$.

\begin{proof}
%First, we claim that 
%\begin{equation}\label{equation_pullpush}
%\varphi^{F\cup i}_{\mathrm{M}}\psi^{F\cup i}_{\mathrm{M}}(\mu\otimes \zeta)=-\big(\underline\beta_{\mathrm{M}_{F\cup i}}\mu\otimes \zeta+\mu\otimes \alpha_{\mathrm{M}^{F\cup i}}\zeta\big)
%\end{equation}
%for any $\mu\in \underline{\mathrm{CH}}(\mathrm{M}_{F\cup i})$ and $\zeta \in {\mathrm{CH}}(\mathrm{M}^{F\cup i})$. In fact, since the composition  $\psi^{F\cup i}_{\mathrm{M}} \varphi^{F\cup i}_{\mathrm{M}}$ is equal to multiplication by $x_{F\cup i}$,  we have
%\begin{align*}
%\psi^{F\cup i}_{\mathrm{M}}\varphi^{F\cup i}_{\mathrm{M}}\psi^{F\cup i}_{\mathrm{M}}(\mu\otimes \zeta)&=x_{F\cup i} \cdot \psi^{F\cup i}_{\mathrm{M}}(\mu\otimes \zeta)\\
%&=\psi^{F\cup i}_{\mathrm{M}}\big(\varphi^{F\cup i}_{\mathrm{M}}(x_{F\cup i})\cdot \mu\otimes \zeta\big)\\
%&=-\psi^{F\cup i}_{\mathrm{M}}\big( \underline\beta_{\mathrm{M}_{F\cup i}}\mu\otimes \zeta +\mu\otimes \alpha_{\mathrm{M}^{F\cup i}}\zeta\big).
%\end{align*}
%By the Poincar\'e duality assumption, Proposition \ref{PropositionPushforwardI} implies that $\psi^{F\cup i}_{\mathrm{M}}$ is injective, and hence Equation \eqref{equation_pullpush} follows. 
We prove the first identity.
The second identity can be proved in the same way.

Since the pushforward $\psi^{F\cup i}_{\mathrm{M}}$ is  a $\mathrm{CH}(\mathrm{M})$-module homomorphism, 
the left-hand side is
\[
\deg_{\mathrm{M}} \big(\psi^{F\cup i}_{\mathrm{M}}\big(\varphi^{F\cup i}_{\mathrm{M}}\psi^{F\cup i}_{\mathrm{M}}\big( 1\otimes \theta_i^{F \cup i}(\mu_1)\big)\cdot \big(1\otimes \theta_i^{F \cup i}(\mu_2)\big) \big)\big).
\]
The pushforward commutes with the degree maps, so the above is equal to
\[
\underline\deg_{\mathrm{M}_{F\cup i}}\otimes \deg_{\mathrm{M}^{F\cup i}} \big(\varphi^{F\cup i}_{\mathrm{M}}\psi^{F\cup i}_{\mathrm{M}}\big( 1\otimes \theta_i^{F \cup i}(\mu_1)\big)\cdot \big( 1 \otimes \theta_i^{F \cup i}(\mu_2)\big)\big).
\]
Using that the composition $\varphi^{F\cup i}_{\mathrm{M}}\psi^{F\cup i}_{\mathrm{M}}$ is multiplication by $\varphi^{F\cup i}_{\mathrm{M}}(x_{F \cup i})$, we get
\[
-\underline\deg_{\mathrm{M}_{F\cup i}}\otimes \deg_{\mathrm{M}^{F\cup i}} \big( \big(1 \otimes \alpha_{\mathrm{M}^{F \cup i}} + \underline{\beta}_{\mathrm{M}_{F \cup i}} \otimes 1\big) \cdot \big( 1\otimes \theta_i^{F \cup i}(\mu_1)\big)\cdot \big( 1 \otimes \theta_i^{F \cup i}(\mu_2)\big)\big).
\]
Since $i$ is a coloop of $\mathrm{M}^{F \cup i}$, the expression simplifies to
\[
-\underline\deg_{\mathrm{M}_{F\cup i}}\otimes \deg_{\mathrm{M}^{F\cup i}} \big( \big(1 \otimes \alpha_{\mathrm{M}^{F \cup i}} \big) \cdot \big(1\otimes \theta_i^{F \cup i}(\mu_1)\big)\cdot \big( 1 \otimes \theta_i^{F \cup i}(\mu_2)\big)\big).
\]
Now the third part of Lemma \ref{lemma_PoincareCompatible} shows that the above quantity is 
the right-hand side of the formula in statement (1).
\end{proof}

% We begin the proof of Theorems \ref{TheoremUnderlinedDecomposition} and \ref{TheoremDecomposition} together with the two Poincar\'e duality statements in Theorem \ref{TheoremChowKahlerPackage} with a lemma.

\begin{lemma}\label{lem:multiplying summands}
If flats $F_1$, $F_2$ are in $\mathscr{S}_i$ and $F_1$ is a proper subset of $F_2$, then 
\[
x_{F_1\cup i}\,x_{F_2\cup i} \in x_{F_1\cup i}\mathrm{CH}_{(i)}.
\]
Similarly, if $F_1$, $F_2$ are in $\underline{\mathscr{S}}_i$ and $F_1$ is a proper subset of $F_2$, then 
\[
x_{F_1\cup i}\,x_{F_2\cup i} \in x_{F_1\cup i}\underline{\mathrm{CH}}_{(i)}.
\]
\end{lemma}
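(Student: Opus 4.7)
The plan is to write $x_{F_1\cup i}x_{F_2\cup i}$ as $x_{F_1\cup i}$ times an element of $\mathrm{CH}_{(i)}$ by exploiting the pullback $\theta_i$, and then kill an unwanted extra term using the incomparability relations in $J_{\mathrm{M}}$.

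Since $F_2 \in \mathscr{S}_i$, the set $F_2$ is a proper subset of $E\setminus i$, it is a flat of $\mathrm{M}$, and $F_2\cup i$ is a flat of $\mathrm{M}$. In particular $F_2$ is a flat of $\mathrm{M}\setminus i$, so $\theta_i(x_{F_2})=x_{F_2}+x_{F_2\cup i}$ is a well-defined element of $\mathrm{CH}_{(i)}$. Multiplying by $x_{F_1\cup i}$, I get
\[
x_{F_1\cup i}\cdot \theta_i(x_{F_2}) \;=\; x_{F_1\cup i}\,x_{F_2} \;+\; x_{F_1\cup i}\,x_{F_2\cup i}.
\]
Thus it suffices to show that $x_{F_1\cup i}\,x_{F_2}=0$ in $\mathrm{CH}(\mathrm{M})$.

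For this, the key observation is that $F_1\cup i$ and $F_2$ are incomparable proper flats of $\mathrm{M}$. Indeed, $i\in F_1\cup i$ while $i\notin F_2$ (because $F_2\subseteq E\setminus i$), so $F_1\cup i\not\subseteq F_2$. Conversely, since $F_1\subsetneq F_2$, there exists $j\in F_2\setminus F_1$; this $j$ is distinct from $i$, hence $j\notin F_1\cup i$, showing $F_2\not\subseteq F_1\cup i$. Both $F_1\cup i$ and $F_2$ are proper flats of $\mathrm{M}$ (for $F_1\cup i$, this follows since $F_1\subsetneq E\setminus i$ would give $F_1\cup i\subsetneq E$; actually using $F_1\in\mathscr{S}_i$ gives $F_1$ proper in $E\setminus i$, so $F_1\cup i$ proper in $E$), so the quadratic relation in $J_\mathrm{M}$ (or the analogous relation for the non-augmented case) yields $x_{F_1\cup i}\,x_{F_2}=0$.

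The argument for the underlined statement is verbatim the same, with $\mathscr{S}_i$ replaced by $\underline{\mathscr{S}}_i$, $\theta_i$ replaced by $\underline{\theta}_i$, and noting that in this case $F_2$ is a nonempty proper flat so that the relation $x_{F_1\cup i}x_{F_2}=0$ holds in $\underline{\mathrm{CH}}(\mathrm{M})$ by the definition of $\underline{J}_\mathrm{M}$. There is no real obstacle here; the whole content is the trivial check that the two flats $F_1\cup i$ and $F_2$ fail to be comparable, which is forced by the fact that $i$ separates them.
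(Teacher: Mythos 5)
Your proof is correct and is essentially the paper's own argument: both rest on the observation that $F_1\cup i$ and $F_2$ are incomparable, so $x_{F_1\cup i}x_{F_2}=0$ and hence $x_{F_1\cup i}x_{F_2\cup i}=x_{F_1\cup i}\,\theta_i(x_{F_2})$. You simply spell out the incomparability check that the paper leaves implicit.
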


\begin{proof}
Since $F_1\cup i$ is not comparable to $F_2$, we have
\[x_{F_1\cup i}\,x_{F_2\cup i} = x_{F_1\cup i}(x_{F_2}+x_{F_2\cup i}) = x_{F_1\cup i}\theta_i(x_{F_2}).\]
The second part follows from the same argument. 
\end{proof}

%\begin{lemma}
%Generation in degree 0 and 1.
%\end{lemma}

%\begin{proof}

%\end{proof}

\begin{proof}[Proof of Theorem \ref{TheoremUnderlinedDecomposition}, Theorem \ref{TheoremDecomposition}, and 
parts (1) and (4) of Theorem \ref{TheoremChowKahlerPackage}]
All the summands in the proposed decompositions are cyclic, and therefore indecomposable in the category of graded  modules.\footnote{By \cite[Corollary 2]{CF} or \cite[Theorem 3.2]{GG},
the indecomposability of the summands in the category of graded modules implies the indecomposability of the summands in the category of  modules.}
We prove the decompositions by induction on the cardinality of the ground set $E$.
If $E$ is empty, then Theorem \ref{TheoremUnderlinedDecomposition}, Theorem \ref{TheoremDecomposition}, and part (1) of 
Theorem \ref{TheoremChowKahlerPackage}
are vacuous, while part (4) of Theorem \ref{TheoremChowKahlerPackage} is trivial.  Furthermore, all of these results are trivial when $E$
is a singleton.  Thus, we may assume
%throughout the section
that $i$ is an element of $E$, that $E\setminus i$ is nonempty,
and that all the results hold for loopless matroids whose ground set is a proper subset of $E$.

First we assume that $i$ is not a coloop.  Let us show that the terms in the right-hand side of the decomposition \eqref{eqn:deletion decomposition} are orthogonal.  Multiplying $\mathrm{CH}_{(i)}$ and $x_{F\cup i}\mathrm{CH}_{(i)}$ lands in $x_{F\cup i}\mathrm{CH}_{(i)}$, and this ideal
vanishes in degree $d$ by Remark \ref{rem:top degree vanishing}, so they are orthogonal.  On the other hand, the product of $x_{F_1\cup i}\mathrm{CH}_{(i)}$ and 
$x_{F_2\cup i}\mathrm{CH}_{(i)}$ vanishes if $F_1, F_2\in \mathscr{S}_i$ are not comparable, while if $F_1 < F_2$ or $F_2 < F_1$, the product is contained in $x_{F_1\cup i}\mathrm{CH}_{(i)}$ or $x_{F_2\cup i}\mathrm{CH}_{(i)}$ respectively, by Lemma \ref{lem:multiplying summands}. So these terms are also orthogonal.

It follows from the induction hypothesis and Lemma \ref{lemma_PoincareCompatible} that the restriction of the Poincar\'e pairing of $\mathrm{CH}(\mathrm{M})$ to $\mathrm{CH}_{(i)}$ is non-degenerate. By Proposition \ref{lem:top degree vanishing}, Proposition \ref{lemma_bothdegree}, and the induction hypothesis, the restriction of the Poincar\'e pairing of $\mathrm{CH}(\mathrm{M})$ to any other summand $x_{F\cup i}\mathrm{CH}_{(i)}$ is also non-degenerate. Therefore, we can conclude that the sum on the right-hand side of \eqref{eqn:deletion decomposition} is a direct sum with a non-degenerate Poincar\'e pairing.

To complete the proof of the decomposition \eqref{eqn:deletion decomposition} and the Poincar\'e duality theorem for $\mathrm{CH}(\mathrm{M})$, 
we must show that the direct sum
$$\mathrm{CH}_{(i)} \oplus \bigoplus_{F \in \mathscr{S}_i} x_{F\cup i} \mathrm{CH}_{(i)}$$
is equal to all of $\mathrm{CH}(\mathrm{M})$.
This is obvious in degree $0$.  To see that it holds in degree $1$, 
it is enough to check that  $x_G$ is contained in the direct sum for any proper flat $G$ of $\mathrm{M}$.
If $G \setminus i$ is a not flat of $\mathrm{M}$, then
$
x_G=\theta_i(x_{G \setminus i}).
$
If $G \setminus i$ is a flat of $\mathrm{M}$, then either $G \setminus i \in \mathscr{S}_i$
or $G \in \mathscr{S}_i$. In the first case, $x_G$ is an element of the summand indexed by $G \setminus i$.
In the second case,
$
x_G = \theta_i(x_G) - x_{G\cup i} \in \mathrm{CH}_{(i)} + x_{G\cup i}\mathrm{CH}_{(i)}.
$

Since our direct sum is a sum of $\mathrm{CH}(\mathrm{M}\setminus i)$-modules and it includes the degree $0$ and $1$ parts of $\mathrm{CH}(\mathrm{M})$, 
it will suffice to show that $\mathrm{CH}(\mathrm{M})$ is generated in degrees $0$ and $1$ as a graded $\mathrm{CH}(\mathrm{M}\setminus i)$-module. In other words, we need to show that
%\begin{equation}\label{equation_span}
\[
\mathrm{CH}^1_{(i)}\cdot \mathrm{CH}^k(\mathrm{M})=\mathrm{CH}^{k+1}(\mathrm{M}) \ \ \text{for any $k \ge 1$.}
\]
%\end{equation}
%for any $k\geq 1$. 

We first prove the equality when $k=1$. 
Since we have proved that the decomposition \eqref{eqn:deletion decomposition} holds in degree $1$, we know that
\[
\mathrm{CH}^2(\mathrm{M}) = \mathrm{CH}^1(\mathrm{M})\cdot \mathrm{CH}^1(\mathrm{M})
= \left(\mathrm{CH}^1_{(i)} \oplus\bigoplus_{F \in \mathscr{S}_i} \mathbb{Q} x_{F\cup i}\right)
\cdot \left(\mathrm{CH}^1_{(i)} \oplus\bigoplus_{F \in \mathscr{S}_i} \mathbb{Q} x_{F\cup i}\right).
\]
Using Lemma \ref{lem:multiplying summands}, we may reduce the problem to showing that 
\[
x_{F\cup i}^2\in \mathrm{CH}^1_{(i)}\cdot \mathrm{CH}^1(\mathrm{M}) \ \ \text{for any $F\in \mathscr{S}_i$.}
\]
We can rewrite the relation $0 = x_{F}y_i$ in the augmented Chow ring of $\mathrm{M}$ as
\begin{align*}
0 & = (\theta_i(x_{F}) - x_{F\cup i})\sum_{i \notin G} x_G\\
& = \theta_i(x_{F})\Big(\sum_{i \notin G} x_G\Big) - x_{F\cup i}\Big(\sum_{G\leq F} x_G\Big),\\
& = \theta_i(x_{F})\Big(\sum_{i \notin G} x_G\Big) - (\theta_i(x_{F})-x_{F})\Big(\sum_{G< F} x_G\Big)-x_{F\cup i} x_{F}\\
& = \theta_i(x_{F})\Big(\sum_{i \notin G} x_G- \sum_{G< F} x_G\Big)+ x_{F}\Big(\sum_{G<F} x_G\Big)-x_{F\cup i}\theta_i(x_{F})+x_{F\cup i}^2, 
\end{align*}
thus reducing the problem to showing that 
\[
x_F x_G\in \mathrm{CH}^1_{(i)}\cdot \mathrm{CH}^1(\mathrm{M}) \ \ \text{for any $G < F\in \mathscr{S}_i$.}
\]
The collection $\mathscr{S}_i$ is downward closed, meaning that if $G<F\in \mathscr{S}_i$, then $G\in \mathscr{S}_i$;
therefore, 
\[
x_F x_G = (\theta_i(x_F) - x_{F\cup i})(\theta_i(x_G) - x_{G\cup i}).
\]
Lemma \ref{lem:multiplying summands} tells us that  $x_{F\cup i}x_{G\cup i}\in \mathrm{CH}^1_{(i)}\cdot \mathrm{CH}^1(\mathrm{M})$,
thus so is $x_Fx_G$.
%Hence, we have proved that
%\begin{equation}\label{equation_k1}
%\mathrm{CH}^1_{(i)}\cdot \mathrm{CH}^1(\mathrm{M})=\mathrm{CH}^{2}(\mathrm{M}),
%\end{equation}
%Equation \eqref{equation_span} holds when $k=1$. 

We next prove the equality when  $k\geq 2$.
In this case, we use the result for $k=1$ along with the fact that the algebra $\mathrm{CH}(\mathrm{M})$ is generated in degree $1$ to conclude that
\[
\mathrm{CH}^1_{(i)}\cdot \mathrm{CH}^{k}(\mathrm{M})=\mathrm{CH}^1_{(i)} \cdot \mathrm{CH}^{1}(\mathrm{M}) \cdot \mathrm{CH}^{k-1}(\mathrm{M})=\mathrm{CH}^{2}(\mathrm{M}) \cdot \mathrm{CH}^{k-1}(\mathrm{M})=\mathrm{CH}^{k+1}(\mathrm{M}).
\]
%We conclude that Equation \eqref{equation_span} holds for any $k\geq 1$, which 
This completes the proof of 
the decomposition \eqref{eqn:deletion decomposition} and the Poincar\'e duality theorem for $\mathrm{CH}(\mathrm{M})$ when there is an element $i$ that is not a coloop of $\mathrm{M}$.

The proof when $i$ is a coloop is almost the same; we explain the places where something different must be said.  The orthogonality of $x_{E \setminus i}\mathrm{CH}_{(i)}$ and $x_{F\cup i} \mathrm{CH}_{(i)}$ for $F \in \mathscr{S}_i$ follows because $E \setminus i$ and $F\cup i$ are incomparable.  To show that the right-hand side of \eqref{eqn:deletion decomp coloop} spans $\mathrm{CH}(\mathrm{M})$, one extra statement we need to check is that
\[
x_{E\setminus i}^2\in \mathrm{CH}^1_{(i)}\cdot \mathrm{CH}^1(\mathrm{M}).
\] 
Since $i$ is a coloop, $\mathscr{S}_i$ is the set of all flats properly contained in $E\setminus i$, and we have
\[0 = x_{E\setminus i}y_i = \sum_{i\notin F} x_F x_{E\setminus i} = x_{E\setminus i}^2 + \sum_{F\in \mathscr{S}_i}
 x_{E\setminus i}x_F = x_{E\setminus i}^2 + 
\sum_{F\in \mathscr{S}_i} x_{E\setminus i}\theta_i(x_F),\]
where the last equality follows because $E \setminus i$ and $F \cup i$ are not comparable.  Thus
\[
x_{E\setminus i}^2= - \sum_{F\in \mathscr{S}_i} x_{E\setminus i}\theta_i(x_F) \in \mathrm{CH}^1_{(i)}\cdot \mathrm{CH}^1(\mathrm{M}).
\]

By the induction hypothesis, we know $\mathrm{CH}(\mathrm{M}\setminus i)$ satisfies the Poincar\'e duality theorem. By the coloop case of Lemma \ref{lemma_PoincareCompatible}, the Poincar\'e pairing on $\mathrm{CH}(\mathrm{M})$ restricts to a perfect pairing between $\mathrm{CH}_{(i)}$ and $x_{E\setminus i}\mathrm{CH}_{(i)}$. Since $\mathrm{CH}_{(i)}$ is a subring of $\mathrm{CH}(\mathrm{M})$ and is zero in degree $d$, the restriction of the Poincar\'e pairing on $\mathrm{CH}(\mathrm{M})$ to $\mathrm{CH}_{(i)}$ is zero. Therefore, the subspaces $\mathrm{CH}_{(i)}$ and $x_{E\setminus i}\mathrm{CH}_{(i)}$ intersect trivially, and the restriction of the Poincar\'e pairing on $\mathrm{CH}(\mathrm{M})$ to $\mathrm{CH}_{(i)}\oplus x_{E\setminus i}\mathrm{CH}_{(i)}$ is non-degenerate. This completes the proof of the theorems about $\mathrm{CH}(\mathrm{M})$ when $i$ is a coloop. %of $M$. 

We observe that the surjectivity of 
the pullback $\varphi^\varnothing_{\mathrm{M}}$ gives the equality
\[
\underline{\mathrm{CH}}^1_{(i)}\cdot \underline{\mathrm{CH}}^{k}(\mathrm{M})=\underline{\mathrm{CH}}^{k+1}(\mathrm{M}) \ \ \text{for any $k \ge 1$.}
\]
The proof of the theorems about $\underline{\mathrm{CH}}(\mathrm{M})$ then follows by an argument identical to the one used for $\mathrm{CH}(\mathrm{M})$.
\end{proof}

\section{Proofs of the hard Lefschetz theorems and the Hodge--Riemann relations}\label{Section4}

In this section, we prove Theorem \ref{TheoremChowKahlerPackage}.
Parts (1) and (4) have already been proved in the previous section.  We will first prove parts (2) and (3) by induction on the cardinality of $E$.
The proof of parts (5) and (6) is nearly identical to the proof of parts (2) and (3), with the added nuance that 
we use parts (2) and (3) for the matroid $\mathrm{M}$ in the proof of parts (5) and (6) for the matroid $\mathrm{M}$.

For any fan $\Sigma$, we will say that $\Sigma$ satisfies the hard Lefschetz theorem or the Hodge--Riemann
relations with respect to some piecewise linear function on $\Sigma$ if the ring $\mathrm{CH}(\Sigma)$ satisfies
the hard Lefschetz theorem or the Hodge--Riemann relations with respect to the corresponding element of $\mathrm{CH}^1(\Sigma)$.

\begin{proof}[Proof of Theorem \ref{TheoremChowKahlerPackage}, parts (2) and (3)]
The statements are trivial when the cardinality of $E$ is $0$ or $1$, so we will assume throughout the proof that the cardinality
of $E$ is at least $2$.

Let $\mathrm{B}$ be the Boolean matroid on $E$.  By the induction hypothesis, we know that 
for every nonempty proper flat $F$ of $\mathrm{M}$, the fans $\underline{\Pi}_{\mathrm{M}_F}$
and $\underline{\Pi}_{\mathrm{M}^F}$ satisfy the hard Lefschetz theorem and the Hodge--Riemann relations
with respect to any strictly convex piecewise linear functions on $\underline{\Pi}_{\mathrm{B}_F}$ and $\underline{\Pi}_{\mathrm{B}^F}$,
respectively.  By \cite[Proposition 7.7]{AHK}, this implies that for every nonempty proper flat $F$ of $\mathrm{M}$,
the product $\underline{\Pi}_{\mathrm{M}_F}\times\underline{\Pi}_{\mathrm{M}^F}$
satisfies the hard Lefschetz theorem and the Hodge--Riemann relations
with respect to any strictly convex piecewise linear function on $\underline{\Pi}_{\mathrm{B}_F} \times \underline{\Pi}_{\mathrm{B}^F}$.
%By the third statement in Proposition \ref{PropositionStars}, it follows that 
In other words, $\underline{\Pi}_\mathrm{M}$ satisfies the \emph{local Hodge--Riemann relations} \cite[Definition 7.14]{AHK}:
\[
\text{The star of any ray in $\underline{\Pi}_\mathrm{M}$ satisfies the Hodge--Riemann relations.}
\]
This in turn implies that $\underline{\Pi}_\mathrm{M}$ satisfies the hard Lefschetz theorem 
with respect to any strictly convex piecewise linear function on 
$\underline{\Pi}_\mathrm{B}$ \cite[Proposition 7.15]{AHK}.  It remains to prove only that 
$\underline{\Pi}_\mathrm{M}$ satisfies the Hodge--Riemann relations with respect to any strictly convex piecewise linear function on $\underline{\Pi}_\mathrm{B}$.

%Let $k$ be an integer, and let $\ell$ be a piecewise linear function on $\Pi_\mathrm{B}$.
Let $\ell$ be a piecewise linear function on $\underline{\Pi}_\mathrm{B}$, and let $\underline{\mathrm{HR}}_\ell^k(\mathrm{M})$ be the \emph{Hodge--Riemann form} 
\[
\underline{\mathrm{HR}}_\ell^k(\mathrm{M}): \underline{\mathrm{CH}}^k(\mathrm{M}) \times \underline{\mathrm{CH}}^k(\mathrm{M}) \longrightarrow \mathbb{Q}, \qquad (\eta_1,\eta_2) \longmapsto (-1)^k\underline{\deg}_\mathrm{M}(\ell^{d-2k-1}\eta_1\eta_2).
\]
By \cite[Proposition 7.6]{AHK}, the fan $\underline{\Pi}_\mathrm{M}$ satisfies the Hodge--Riemann relations with respect to $\ell$ 
if and only if, for all $k < \frac{d}{2}$,
the Hodge--Riemann form $\underline{\mathrm{HR}}_\ell^k(\mathrm{M})$ is non-degenerate and has the signature
\[
\sum_{j=0}^k (-1)^{k-j}\Big( \dim \underline{\mathrm{CH}}^j(\mathrm{M}) - \dim \underline{\mathrm{CH}}^{j-1}(\mathrm{M}) \Big).
\]
Since  $\underline{\Pi}_\mathrm{M}$ satisfies the hard Lefschetz theorem with respect to any strictly convex piecewise linear function on 
$\underline{\Pi}_\mathrm{B}$ and signature is a locally constant function on the space of nonsingular forms,
the following statements are equivalent:
\begin{enumerate}[(1)]\itemsep 5pt
\item[(i)] The fan $\underline{\Pi}_\mathrm{M}$ satisfies the Hodge--Riemann relations with respect to any strictly convex piecewise linear function on $\underline{\Pi}_\mathrm{B}$.
\item[(ii)] The fan $\underline{\Pi}_\mathrm{M}$ satisfies the Hodge--Riemann relations with respect to some strictly convex piecewise linear function on $\underline{\Pi}_\mathrm{B}$.
\end{enumerate}
Furthermore, since satisfying the Hodge--Riemann relations with respect to a given piecewise
linear function is an open condition on the function, statement (ii) is equivalent to the following:
\begin{enumerate}[(1)]\itemsep 5pt
\item[(iii)] The fan $\underline{\Pi}_\mathrm{M}$ satisfies the Hodge--Riemann relations with respect to some convex piecewise linear function on $\underline{\Pi}_\mathrm{B}$.
\end{enumerate}
We show that statement (iii) holds using the semi-small decomposition in Theorem \ref{TheoremUnderlinedDecomposition}.

If $\mathrm{M}$ is the Boolean matroid $\mathrm{B}$,
then $\underline{\mathrm{CH}}(\mathrm{M})$ can be identified with the cohomology ring of the 
smooth complex projective toric variety $X_{\underline{\Pi}_\mathrm{B}}$.
Therefore, in this case, Theorem \ref{TheoremChowKahlerPackage} is a special case of the usual hard Lefschetz theorem and the Hodge--Riemann relations for smooth complex projective varieties.\footnote{It is not difficult to directly prove the hard Lefschetz theorem and the Hodge--Riemann relations for $\underline{\mathrm{CH}}(\mathrm{B})$ using the coloop case of Theorem \ref{TheoremUnderlinedDecomposition}. Alternatively, we may apply McMullen's hard Lefschetz theorem and Hodge--Riemann relations for polytope algebras \cite {McMullen} to the standard permutohedron in $\mathbb{R}^E$.
%We leave details to the interested readers.
}

If $\mathrm{M}$ is not the Boolean matroid $\mathrm{B}$, choose an element $i$ that is not a coloop in $\mathrm{M}$,
and consider the morphism of fans
\[
\underline{\pi}_i:\underline{\Pi}_{\mathrm{M}} \longrightarrow \underline{\Pi}_{\mathrm{M} \setminus i}.
\]
By induction, we know that $\underline{\Pi}_{\mathrm{M} \setminus i}$ satisfies the Hodge--Riemann relations with respect to any strictly convex piecewise linear function $\ell$ on $\underline{\Pi}_{\mathrm{B} \setminus i}$.
%Choose any  strictly convex piecewise linear function $\ell$ on $\Pi_{\mathrm{B} \setminus i}$.
We will show that $\underline{\Pi}_\mathrm{M}$ satisfies the Hodge--Riemann relations with respect to the pullback  $\ell_i\coloneq \ell \circ \pi_i$, which is a piecewise linear function on $\underline{\Pi}_\mathrm{B}$ that is convex but not necessarily strictly convex.
%Suppose $i$ is an element of $E$ that is not a coloop of $\mathrm{M}$.

By Theorem \ref{TheoremUnderlinedDecomposition}, we have the orthogonal decomposition of 
$\underline{\mathrm{CH}}(\mathrm{M})$ into $\underline{\mathrm{CH}}(\mathrm{M}\setminus i)$-modules
\[
\underline{\mathrm{CH}}(\mathrm{M}) = \underline{\mathrm{CH}}_{(i)} \oplus \bigoplus_{F \in \underline{\mathscr{S}}_i} x_{F\cup i} \underline{\mathrm{CH}}_{(i)}. 
\]
By orthogonality, it is enough to show that each summand of $\underline{\mathrm{CH}}(\mathrm{M}) $ 
satisfies the Hodge--Riemann relations with respect to $\ell_i$:
\begin{enumerate}[(1)]\itemsep 5pt
\item[(iv)] For every nonnegative integer $k < \frac{d}{2}$, the bilinear form
\[
\underline{\mathrm{CH}}^k_{(i)}  \times \underline{\mathrm{CH}}^k_{(i)} \longrightarrow \mathbb{Q}, \qquad (\eta_1,\eta_2)\longmapsto (-1)^k \underline{\deg}_\mathrm{M}(\ell_i^{d-2k-1} \eta_1\eta_2)
\]
is positive definite on the kernel of multiplication by $\ell_i^{d-2k}$.
\item[(v)]  For every nonnegative integer $k < \frac{d}{2}$, the bilinear form
\[
x_{F\cup i}\underline{\mathrm{CH}}^{k-1}_{(i)}  \times x_{F\cup i}\underline{\mathrm{CH}}^{k-1}_{(i)} \longrightarrow \mathbb{Q}, \qquad (\eta_1,\eta_2)\longmapsto (-1)^k \underline{\deg}_\mathrm{M}(\ell_i^{d-2k-1} \eta_1\eta_2)
\]
is positive definite on the kernel of multiplication by $\ell_i^{d-2k}$.
\end{enumerate}
By Proposition \ref{DeletionInjection}, the homomorphism $\underline{\theta}_i$ restricts to an isomorphism of 
$\underline{\mathrm{CH}}(\mathrm{M}\setminus i)$-modules 
\[
\underline{\mathrm{CH}}(\mathrm{M} \setminus i) \cong \underline{\mathrm{CH}}_{(i)}.
\]
% that commutes with the degree maps $\deg_{\mathrm{M} \setminus i}$ and $\deg_\mathrm{M}$.
Thus, statement (iv) follows from Lemma \ref{lemma_PoincareCompatible} and the induction hypothesis applied to $\mathrm{M} \setminus i$.
By Propositions \ref{upsi injective}, \ref{DeletionInjection}, and \ref{lem:top degree vanishing}, 
the homomorphisms $\underline{\theta}_i^{F\cup i}$ and $\underline{\psi}^{F\cup i}_{\mathrm{M}}$ 
give a $\underline{\mathrm{CH}}(\mathrm{M}\setminus i)$-module isomorphism
\[
\underline{\mathrm{CH}}(\mathrm{M}_{F \cup i})\otimes  \underline{\mathrm{CH}}(\mathrm{M}^F)  \cong 
\underline{\mathrm{CH}}(\mathrm{M}_{F \cup i})\otimes \underline{\theta}_i^{F\cup i} \underline{\mathrm{CH}}(\mathrm{M}^F)  \cong x_{F \cup i} \underline{\mathrm{CH}}_{(i)}[1].
\]
%Note that the pullback of the piecewise linear function $\ell$ pullbacks is the class of a strictly convex piecewise linear function on $\underline{\Pi}_{\mathrm{B}_{F \cup i} }\times \Pi_{\mathrm{B}^F}$.
Note that the pullback of a strictly convex piecewise linear function on $\underline{\Pi}_{\mathrm{B} \setminus i}$ to the star 
\[
\underline{\Pi}_{(\mathrm{B}\setminus i)_{F}} \times \underline{\Pi}_{(\mathrm{B} \setminus i)^F} = \underline{\Pi}_{\mathrm{B}_{F \cup i}} \times \underline{\Pi}_{\mathrm{B}^F} 
\]
 is the class of a strictly convex piecewise linear function.
Therefore,
statement (v) follows from Proposition \ref{lemma_bothdegree} and the induction applied to $\mathrm{M}_{F \cup i}$ and $\mathrm{M}^F$.
\end{proof}

\begin{proof}[Proof of Theorem \ref{TheoremChowKahlerPackage}, parts (5) and (6)]
This proof is nearly identical to the proof of parts (2) and (3).  In that argument, we used the fact that rays of $\underline{\Pi}_{\mathrm{M}}$
are indexed by nonempty proper flats of $\mathrm{M}$ and the star of the ray $\underline{\rho}_F$ is isomorphic
to $\underline{\Pi}_{\mathrm{M}_F}\times\underline{\Pi}_{\mathrm{M}^F}$, which we can show satisfies the hard Lefschetz
theorem and the Hodge--Riemann relations using the induction hypothesis.
When dealing instead with the augmented Bergman fan $\Pi_{\mathrm{M}}$, we have rays indexed by elements of $E$ and rays
indexed by proper flats of $\mathrm{M}$, with 
$$\text{star}_{\rho_i} \Pi_\mathrm{M} \cong \Pi_{\mathrm{M}_{\text{cl}(i)}} \and
\text{star}_{\rho_F} \Pi_\mathrm{M}\cong\underline{\Pi}_{\mathrm{M}_{F}}\times \Pi_{\mathrm{M}^F}.$$
Thus the stars of $\rho_i$ and $\rho_F$ for nonempty $F$ can be shown to satisfy the hard Lefschetz
theorem and the Hodge--Riemann relations using the induction hypothesis.  However, the star of $\rho_\varnothing$
is isomorphic to $\underline{\Pi}_{\mathrm{M}}$, so we need to use parts (2) and (3) of Theorem \ref{TheoremChowKahlerPackage}
for $\mathrm{M}$ itself.
\end{proof}

\begin{remark}\label{RemarkAlternative}
It is possible to deduce Poincar\'e duality, the hard Lefschetz theorem, and the Hodge--Riemann relations for $\mathrm{CH}(\mathrm{M})$  using  \cite[Theorem 6.19 and Theorem 8.8]{AHK},
where the three properties are proved for generalized Bergman fans 
$\Sigma_{\mathrm{N},\mathscr{P}}$  in \cite[Definition 3.2]{AHK}. % in \cite[Definition 3.2]{AHK}, where $\mathrm{N}$ is any loopless matroid on $E$ and $\mathscr{P}$ is any order filter in the poset of nonempty proper flats of $\mathrm{N}$. 
We sketch the argument here, leaving details to the interested readers.
Consider  the direct sum $\mathrm{M} \oplus 0$ of $\mathrm{M}$ and the rank $1$ matroid on  the singleton $\{0\}$ and the order filter  $\mathscr{P}(\mathrm{M})$ of all proper flats of $\mathrm{M} \oplus 0$ that contain $0$.
The symbols  $\mathrm{B}\oplus 0$ and $\mathscr{P}(\mathrm{B})$ are defined in the same way for the Boolean matroid $\mathrm{B}$ on $E$.
It is straightforward to check that the linear isomorphism
\[
\mathbb{R}^E \longrightarrow \mathbb{R}^{E \cup 0}/\langle \mathbf{e}_{E}+\mathbf{e}_0\rangle, \quad \mathbf{e}_j \longmapsto \mathbf{e}_j %\quad -\mathbf{e}_{E \setminus F} \longmapsto -\mathbf{e}_{E \setminus F} =\mathbf{e}_{F \cup 0}
\]
identifies the complete fan $\Pi_\mathrm{B}$ with the complete fan $\Sigma_{\mathrm{B} \hspace{0.3mm}\oplus  \hspace{0.3mm}0, \mathscr{P}(\mathrm{B})}$,
and the augmented Bergman fan $\Pi_\mathrm{M}$ with a subfan of $\Sigma_{\mathrm{M} \hspace{0.3mm}\oplus  \hspace{0.3mm} 0,\mathscr{P}(\mathrm{M})}$. %with the identical set of rays.
%Therefore, it is enough to show that the inclusion of $\Pi_\mathrm{M}$ in  $\Sigma_{\mathrm{M} \hspace{0.3mm}\oplus  \hspace{0.3mm} 0,\mathscr{P}(\mathrm{M})}$
%induces an isomorphism between their Chow rings.
%This follows from the equivalence of the two definitions given for $\mathrm{CH}(\mathrm{M})$, which is a consequence of the third identity in Lemma \ref{PropositionIdentities}.
The third identity in Lemma \ref{PropositionIdentities} shows that
the inclusion of the augmented Bergman fan $\Pi_\mathrm{M}$ into the generalized Bergman fan $\Sigma_{\mathrm{M} \hspace{0.3mm}\oplus  \hspace{0.3mm} 0,\mathscr{P}(\mathrm{M})}$ induces an isomorphism between their Chow rings.
 \end{remark}

%%%%%%%%%%%%%%%%%%%%%%%%%%%%

\section{Proof of Theorem \ref{TheoremSimplexDecomposition}}\label{Section5}

In this section, we prove the decomposition (\ref{underlinedalphadecomposition}) by induction on the cardinality of $E$.
%first part of Theorem \ref{TheoremSimplexDecomposition}, which is the following decomposition,
%\begin{equation}\label{alpha_decomposition}
%\underline{\CH}(\mathrm{M}) = \underline{\mathrm{H}}_{\underline{\alpha}}(\mathrm{M}) \oplus \ \bigoplus_{\text{$F$ nonempty proper}}  \ \underline\psi^F_\mathrm{M}\ \uCH(\mathrm{M}_F)\otimes \uJ_{\underline{\alpha}}(\mathrm{M}^F).
%\end{equation}
The decomposition (\ref{alphadecomposition}) can be proved using the same argument. 
The results are trivial when $E$ has at most one element.
Thus, we may assume that $i$ is an element of $E$, that $E \setminus i$ is nonempty, and that all the results hold for loopless matroids whose ground set is a proper subset of $E$.

We first prove that the summands appearing in the right-hand side of  (\ref{underlinedalphadecomposition})  are orthogonal to each other. 
%Let $\mathscr{S}$ be the collection of nonempty proper flats of $\mathrm{M}$.

\begin{lemma}
%The summands appearing in the right-hand side of  (\ref{underlinedalphadecomposition}) are orthogonal for the Poincar\'e pairing of $\uCH(\mathrm{M})$:
Let $F$ and $G$ be distinct nonempty proper flats of $\mathrm{M}$.
\begin{enumerate}[(1)]\itemsep 5pt
\item The spaces  $\underline\psi^F_\mathrm{M}\ \uCH(\mathrm{M}_F)\otimes \uJ_{\underline{\alpha}}(\mathrm{M}^F)$ and $\underline{\mathrm{H}}_{\underline{\alpha}}(\mathrm{M})$  are orthogonal in $\underline{\CH}(\mathrm{M})$.
\item The spaces
$\underline{\psi}^F_\mathrm{M} \uCH(\mathrm{M}_F)\otimes \uJ_{\underline{\alpha}}(\mathrm{M}^F)$ and $\underline{\psi}^G_\mathrm{M} \uCH(\mathrm{M}_G)\otimes \uJ_{\underline{\alpha}}(\mathrm{M}^G)$ are orthogonal in $\underline{\CH}(\mathrm{M})$.
\end{enumerate}
\end{lemma}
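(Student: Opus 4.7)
The plan is to exploit the $\underline{\CH}(\mathrm{M})$-module structure of the pushforward $\underline{\psi}^F_\mathrm{M}$, the identity $\underline{\varphi}^F_\mathrm{M}(\underline{\alpha}_\mathrm{M}) = \underline{\alpha}_{\mathrm{M}_F}\otimes 1$ from Proposition~\ref{DefinitionUnderlinedPull}, and the compatibility of $\underline{\psi}^F_\mathrm{M}$ with the degree maps from Proposition~\ref{DefinitionUnderlinedPush}.

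For part~(1), since $\underline{\mathrm{H}}_{\underline{\alpha}}(\mathrm{M})$ is generated by $\underline{\alpha}_\mathrm{M}$, it suffices to pair against each power $\underline{\alpha}_\mathrm{M}^k$. For $a\in \underline{\CH}(\mathrm{M}_F)$ and $b\in \underline{\mathrm{J}}_{\underline{\alpha}}(\mathrm{M}^F)$, the module property together with degree compatibility give
\[
\underline{\deg}_\mathrm{M}\bigl(\underline{\psi}^F_\mathrm{M}(a\otimes b)\cdot \underline{\alpha}_\mathrm{M}^k\bigr) \;=\; \underline{\deg}_{\mathrm{M}_F}\bigl(a\,\underline{\alpha}_{\mathrm{M}_F}^k\bigr)\cdot \underline{\deg}_{\mathrm{M}^F}(b).
\]
The second factor vanishes because every element of $\underline{\mathrm{J}}_{\underline{\alpha}}(\mathrm{M}^F)$ has graded degree different from $\rk_{\mathrm{M}^F}-1$, which is the only degree where $\underline{\deg}_{\mathrm{M}^F}$ is nonzero.

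For part~(2), if $F$ and $G$ are incomparable, then $x_Fx_G=0$ in $\underline{\CH}(\mathrm{M})$, and since the images of $\underline{\psi}^F_\mathrm{M}$ and $\underline{\psi}^G_\mathrm{M}$ are contained in the principal ideals generated by $x_F$ and $x_G$ respectively, their product vanishes. Otherwise we may assume $F\subsetneq G$ by symmetry (the case $G\subsetneq F$ follows by swapping the roles of $b$ and $d$). The key step is the base-change identity
\[
\underline{\varphi}^G_\mathrm{M}\circ \underline{\psi}^F_\mathrm{M} \;=\; \bigl(\id\otimes \underline{\psi}^F_{\mathrm{M}^G}\bigr)\circ \bigl(\underline{\varphi}^{G\setminus F}_{\mathrm{M}_F}\otimes \id\bigr),
\]
viewed as maps $\underline{\CH}(\mathrm{M}_F)\otimes \underline{\CH}(\mathrm{M}^F)\to \underline{\CH}(\mathrm{M}_G)\otimes \underline{\CH}(\mathrm{M}^G)$ under the natural identifications $(\mathrm{M}_F)_{G\setminus F}=\mathrm{M}_G$, $(\mathrm{M}_F)^{G\setminus F}=(\mathrm{M}^G)_F$, and $(\mathrm{M}^G)^F=\mathrm{M}^F$. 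Combining this identity with the module property and degree compatibility of $\underline{\psi}^G_\mathrm{M}$ and $\underline{\psi}^F_{\mathrm{M}^G}$, and applying $\underline{\varphi}^F_{\mathrm{M}^G}(\underline{\alpha}_{\mathrm{M}^G}) = \underline{\alpha}_{(\mathrm{M}^G)_F}\otimes 1$ to handle the factor $d\in \underline{\mathrm{H}}_{\underline{\alpha}}(\mathrm{M}^G)$, the pairing $\underline{\deg}_\mathrm{M}(\underline{\psi}^F_\mathrm{M}(a\otimes b)\cdot \underline{\psi}^G_\mathrm{M}(c\otimes d))$ collapses to an expression containing a factor $\underline{\deg}_{\mathrm{M}^F}(b)$, which vanishes as in part~(1).

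The main obstacle is the base-change formula itself. We plan to verify it by direct computation on monomial generators, using the rules of Propositions~\ref{DefinitionUnderlinedPull} and~\ref{DefinitionUnderlinedPush}: every factor $x_{F'}$ with $F\subsetneq F'$ appearing in a monomial lift of $a$ transforms under $\underline{\varphi}^G_\mathrm{M}$ according to its relation with $G$, and the subcases $F'\subsetneq G$, $F'=G$, and $F'\supsetneq G$ match the corresponding monomial contributions on the right-hand side, while the subcase of $F'$ incomparable with $G$ annihilates both sides. Geometrically this is the compatibility of toric pullback and pushforward along the two inclusions of the common intersection into the divisors corresponding to $x_F$ and $x_G$; by Proposition~\ref{PropositionStars}, the common star at the cone $\underline{\rho}_F+\underline{\rho}_G$ is computed in two consistent ways as $\underline{\Pi}_{\mathrm{M}_G}\times \underline{\Pi}_{(\mathrm{M}^G)_F}\times \underline{\Pi}_{\mathrm{M}^F}$.
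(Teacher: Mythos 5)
Your proof is correct and follows essentially the same route as the paper: part (1) rests on the $\underline{\mathrm{H}}_{\underline{\alpha}}(\mathrm{M})$-module property of the summand (via $\underline{\varphi}^F_\mathrm{M}(\underline{\alpha}_\mathrm{M})=\underline{\alpha}_{\mathrm{M}_F}\otimes 1$) together with the vanishing of $\uJ_{\underline{\alpha}}(\mathrm{M}^F)$ in degree $\mathrm{rk}_\mathrm{M}(F)-1$, and part (2) rests on exactly the base-change square $\underline{\varphi}^G_\mathrm{M}\circ\underline{\psi}^F_\mathrm{M}=(\mathrm{id}\otimes\underline{\psi}^F_{\mathrm{M}^G})\circ(\underline{\varphi}^{G\setminus F}_{\mathrm{M}_F}\otimes\mathrm{id})$ that the paper uses (and likewise leaves unverified in detail). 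The only cosmetic differences are that you unwind the paper's appeal to part (1) for the matroid $\mathrm{M}^G$ into an explicit degree computation, and that you spell out the easy incomparable case, which the paper passes over with ``we may suppose $F$ is a proper subset of $G$.''
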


\begin{proof}
%Since $\uvarphi^F_\mathrm{M}(\underline\alpha_\mathrm{M})=\underline\alpha_{\mathrm{M}_F}\otimes 1$,  the subspace $\underline{\psi}^F_\mathrm{M} \uCH(\mathrm{M}_F)\otimes \uJ_{\underline{\alpha}}(\mathrm{M}^F)$ is closed under  multiplication by $\underline\alpha_\mathrm{M}$. In other words, 
The fifth bullet point in Proposition \ref{DefinitionUnderlinedPull}, together with the fact that $\underline{\psi}_{\mathrm{M}}^F$ is a $\underline{\CH}(\mathrm{M})$-module homomorphism via $\underline{\varphi}_{\mathrm{M}}^F$, 
implies that every summand in the right-hand side of  (\ref{underlinedalphadecomposition}) is an $\underline{\mathrm{H}}_{\underline{\alpha}}(\mathrm{M})$-submodule. 
Thus the first orthogonality follows from the vanishing of  $\underline\psi^F_\mathrm{M} \ \uCH(\mathrm{M}_F)\otimes \uJ_{\underline{\alpha}}(\mathrm{M}^F)$ in degree $d-1$.

%When $F$ and $G$ are incomparable,  the second statement follows from the relation $x_F x_G=0$.
For the second orthogonality, we may suppose that $F$ is a proper subset of  $G$.
%In this case, since  $\psi_\mathrm{M}^G$ is a $\underline{\CH}(\mathrm{M})$-module homomorphism that commutes with the degree maps, 
Since $\underline\psi^G_\mathrm{M}$ is a $\underline{\CH}(\mathrm{M})$-module homomorphism commuting with the degree maps, it is enough to show that 
\[
\text{$\uvarphi^G_\mathrm{M}\underline{\psi}^F_\mathrm{M} \uCH(\mathrm{M}_F)\otimes \uJ_{\underline{\alpha}}(\mathrm{M}^F)$ and $\uCH(\mathrm{M}_G)\otimes \uJ_{\underline{\alpha}}(\mathrm{M}^G)$ are orthogonal in $\uCH(\mathrm{M}_{G})\otimes \uCH(\mathrm{M}^{G})$.}
\]
% the summands $\underline{\mathrm{H}}_{\underline{\alpha}}(\mathrm{M})$ and $\underline\psi^F_\mathrm{M}\ \uCH(\mathrm{M}_F)\otimes \uJ_{\underline{\alpha}}(\mathrm{M}^F)$ are orthogonal. 
%because $x_F x_G=0$. 
%are orthogonal with respect to the Poincar\'e pairing in $\uCH(\mathrm{M}_{G})\otimes \uCH(\mathrm{M}^{G})$,
%because
For this, we use the commutative diagram of pullback and pushforward maps
\[
\xymatrixcolsep{5pc}\xymatrix{
\uCH(\mathrm{M}_{F})\otimes \uCH(\mathrm{M}^{F}) \ar[r]^{\underline\psi^{F}_\mathrm{M}} \ar[d]^{\uvarphi^{G \setminus F}_{\mathrm{M}_F}\otimes\ 1}&\uCH(\mathrm{M})\ar[d]^{\underline\varphi^G_\mathrm{M}} \\
\uCH(\mathrm{M}_{G})\otimes \uCH(\mathrm{M}^{G}_{F})\otimes \uCH(\mathrm{M}^{F}) \ar[r]^{\quad 1\ \otimes \ \underline\psi^F_{\mathrm{M}^G}}&\uCH(\mathrm{M}_{G})\otimes \uCH(\mathrm{M}^{G}),
}
\]
which  further reduces to the assertion that
\[
\text{
$\underline\psi^F_{\mathrm{M}^G}\uCH(\mathrm{M}_F^G)\otimes \uJ_{\underline{\alpha}}(\mathrm{M}^F)$ and  $\uJ_{\underline{\alpha}}(\mathrm{M}^G)$ are 
orthogonal  in $\uCH(\mathrm{M}^G)$.
}
\]
Since $\uJ_{\underline{\alpha}}(\mathrm{M}^G)\subseteq \underline\H_{\underline{\alpha}}(\mathrm{M}^G)$, the above follows from the first orthogonality for $\mathrm{M}^G$.
\end{proof}

We next show that the restriction of the Poincar\'e pairing of $\uCH(\mathrm{M})$ to each summand appearing in the right-hand side of  (\ref{underlinedalphadecomposition}) is non-degenerate. 

\begin{lemma}
Let $F$ be a nonempty proper flat of $\mathrm{M}$, and let $k=\text{rk}_\mathrm{M}(F)$.
\begin{enumerate}[(1)]\itemsep 5pt
\item The restriction of the Poincar\'e pairing of $\uCH(\mathrm{M})$ to $\underline{\mathrm{H}}_{\underline{\alpha}}(\mathrm{M})$ is non-degenerate.
\item The restriction of the Poincar\'e pairing of $\uCH(\mathrm{M})$ to $\underline{\psi}^F_\mathrm{M} \uCH(\mathrm{M}_F)\otimes \uJ_{\underline{\alpha}}(\mathrm{M}^F)$ is non-degenerate.
\end{enumerate}
\end{lemma}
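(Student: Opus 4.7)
The plan is to handle the two parts separately: part (1) by an explicit basis computation, and part (2) by a triangularity argument that reduces non-degeneracy to Poincar\'e duality for $\underline{\mathrm{CH}}(\mathrm{M}_F)$, which is available by induction on $|E|$.

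For part (1), Proposition \ref{PropositionAlphaDegree} gives $\underline{\deg}_\mathrm{M}(\underline{\alpha}_\mathrm{M}^{d-1}) = 1$. Since a vanishing lower power $\underline{\alpha}_\mathrm{M}^k = 0$ for some $k < d$ would force $\underline{\alpha}_\mathrm{M}^{d-1} = 0$, every $\underline{\alpha}_\mathrm{M}^k$ for $0 \le k \le d-1$ is nonzero. Hence $\underline{\mathrm{H}}_{\underline{\alpha}}^k(\mathrm{M})$ is one-dimensional in degrees $0, \ldots, d-1$, and the Poincar\'e pairing matrix in the basis $\{\underline{\alpha}_\mathrm{M}^k\}_{k=0}^{d-1}$ is the anti-diagonal matrix of $1$'s, which is non-degenerate.

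For part (2), I would first invoke Proposition \ref{upsi injective} to conclude that $\underline{\psi}^F_\mathrm{M}$ is injective, so that the Poincar\'e pairing on the image transports to a bilinear form on $\underline{\mathrm{CH}}(\mathrm{M}_F) \otimes \underline{\mathrm{J}}_{\underline{\alpha}}(\mathrm{M}^F)$. Combining the $\underline{\mathrm{CH}}(\mathrm{M})$-module property of $\underline{\psi}^F_\mathrm{M}$, the identity that $\underline{\varphi}^F_\mathrm{M}\circ\underline{\psi}^F_\mathrm{M}$ equals multiplication by $\underline{\varphi}^F_\mathrm{M}(x_F) = -1\otimes \underline{\alpha}_{\mathrm{M}^F} - \underline{\beta}_{\mathrm{M}_F}\otimes 1$ (Propositions \ref{DefinitionUnderlinedPull} and \ref{DefinitionUnderlinedPush}), and the degree compatibility of $\underline{\psi}^F_\mathrm{M}$, I would derive the pairing formula
\[
\underline{\deg}_\mathrm{M}\bigl(\underline{\psi}^F_\mathrm{M}(\eta_1)\,\underline{\psi}^F_\mathrm{M}(\eta_2)\bigr) = -\underline{\deg}_{\mathrm{M}_F}\otimes \underline{\deg}_{\mathrm{M}^F}\bigl(\eta_1\eta_2(1\otimes \underline{\alpha}_{\mathrm{M}^F} + \underline{\beta}_{\mathrm{M}_F}\otimes 1)\bigr).
\]
Expanding each element as $\eta = \sum_{j=0}^{k-2} \mu_j \otimes \underline{\alpha}_{\mathrm{M}^F}^j$ with $k = \mathrm{rk}_\mathrm{M}(F) \ge 2$, and using $\underline{\deg}_{\mathrm{M}^F}(\underline{\alpha}_{\mathrm{M}^F}^m) = \delta_{m,k-1}$ from part (1) applied to $\mathrm{M}^F$, the pairing of $\eta$ with a simple tensor $\nu_{j_2}\otimes \underline{\alpha}_{\mathrm{M}^F}^{j_2}$ collapses to $-\underline{\deg}_{\mathrm{M}_F}\bigl((\mu_{k-2-j_2} + \mu_{k-1-j_2}\underline{\beta}_{\mathrm{M}_F})\nu_{j_2}\bigr)$, with the convention $\mu_{k-1} = 0$. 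If $\eta$ lies in the kernel, Poincar\'e duality on $\underline{\mathrm{CH}}(\mathrm{M}_F)$ forces $\mu_{k-2-j_2} + \mu_{k-1-j_2}\underline{\beta}_{\mathrm{M}_F} = 0$ for every $j_2$; starting at $j_2 = 0$ gives $\mu_{k-2} = 0$, and iterating $j_2 = 1, \ldots, k-2$ gives $\mu_{k-3} = \cdots = \mu_0 = 0$.

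The main obstacle I anticipate is setting up this triangular cascade cleanly: the off-diagonal term from $\underline{\beta}_{\mathrm{M}_F}\otimes 1$ couples consecutive indices $j$, but the restriction $j \le k-2$ (rather than $k-1$) built into $\underline{\mathrm{J}}_{\underline{\alpha}}(\mathrm{M}^F)$ provides exactly the base case $\mu_{k-1} = 0$ that makes the recursion terminate. Everything else reduces to direct computation plus the inductive Poincar\'e duality.
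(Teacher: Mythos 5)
Both parts are correct. Part (1) is exactly the paper's (one-line) argument, which just cites Proposition \ref{PropositionAlphaDegree}; you have filled in the anti-diagonal pairing matrix, which is fine.

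For part (2) your setup coincides with the paper's: injectivity of $\underline{\psi}^F_\mathrm{M}$, the $\uCH(\mathrm{M})$-module property, degree compatibility, and the fact that $\underline{\varphi}^F_\mathrm{M}\circ\underline{\psi}^F_\mathrm{M}$ is multiplication by $-1\otimes\underline{\alpha}_{\mathrm{M}^F}-\underline{\beta}_{\mathrm{M}_F}\otimes 1$. Where you diverge is at the last step, and your version is the more careful one. The paper asserts that the cross term $-\underline{\deg}_{\mathrm{M}_F}(\underline{\beta}_{\mathrm{M}_F}\mu_1\mu_2)\,\underline{\deg}_{\mathrm{M}^F}(\nu_1\nu_2)$ vanishes ``by the vanishing of $\uJ_{\underline{\alpha}}(\mathrm{M}^F)$ in degree $k-1$,'' so that the restricted pairing is exactly $-\underline{\deg}_{\mathrm{M}_F}(\mu_1\mu_2)\,\underline{\deg}_{\mathrm{M}^F}(\underline{\alpha}_{\mathrm{M}^F}\nu_1\nu_2)$. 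But $\uJ_{\underline{\alpha}}(\mathrm{M}^F)$ is not closed under multiplication: for $\nu_1=\underline{\alpha}_{\mathrm{M}^F}^{j_1}$ and $\nu_2=\underline{\alpha}_{\mathrm{M}^F}^{j_2}$ with $j_1,j_2\le k-2$ and $j_1+j_2=k-1$, the product lands in the top degree of $\uCH(\mathrm{M}^F)$ and the cross term survives. (Concretely, for the Boolean matroid of rank $5$ and a rank-$3$ flat $F$, the self-pairing of $\underline{\psi}^F_\mathrm{M}(1\otimes\underline{\alpha}_{\mathrm{M}^F})$ equals $-\underline{\deg}_{\mathrm{M}_F}(\underline{\beta}_{\mathrm{M}_F})\,\underline{\deg}_{\mathrm{M}^F}(\underline{\alpha}_{\mathrm{M}^F}^2)=-1$, whereas the displayed formula in the paper would give $0$.) Your triangular cascade --- the cross term only couples indices with $j_1+j_2=k-1$ while the main term sits on $j_1+j_2=k-2$, and the truncation $\mu_{k-1}=0$ built into $\uJ_{\underline{\alpha}}(\mathrm{M}^F)$ seeds the recursion --- is therefore not an optional refinement but precisely the step needed to get non-degeneracy from the corrected pairing formula. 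In short: same route as the paper, but your proof repairs the one step whose justification in the paper does not go through as written.
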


\begin{proof}
The first statement follows from Proposition \ref{PropositionAlphaDegree}.
We prove the second statement.

Since the Poincar\'e pairing of $\underline{\CH}(\mathrm{M}_F)$  is non-degenerate, it is enough to show that the restriction of the Poincar\'e pairing satisfies
\[
\underline\deg_{\mathrm{M}}\big(\upsi^F_\mathrm{M}(\mu_1\otimes \nu_1)\cdot \upsi^F_\mathrm{M}(\mu_2\otimes \nu_2) \big)
=-\underline\deg_{\mathrm{M}_F}(\mu_1 \mu_2)\ \underline\deg_{\mathrm{M}^F}(\underline\alpha_{\mathrm{M}^F}\nu_1\nu_2).
\]
%for any $\uCH(\mathrm{M}_F)$  $\uJ_{\underline{\alpha}}(\mathrm{M}^F)$.
The proof of the identity is nearly identical to that of Proposition \ref{lemma_bothdegree}.
The left-hand side is 
\[
\underline\deg_{\mathrm{M}}\big(\upsi^F_\mathrm{M} \big( \uvarphi^F_\mathrm{M}\upsi^F_\mathrm{M} (\mu_1\otimes \nu_1)\cdot (\mu_2\otimes \nu_2) \big) \big)
=
\underline\deg_{\mathrm{M}_F} \otimes \underline\deg_{\mathrm{M}^F}  \big( \uvarphi^F_\mathrm{M}\upsi^F_\mathrm{M} (\mu_1\otimes \nu_1)\cdot (\mu_2\otimes \nu_2) \big) 
\]
because $\upsi^F_\mathrm{M}$ is a $\uCH(\mathrm{M})$-module homomorphism commuting with the degree maps.
Since the composition $ \uvarphi^F_\mathrm{M}\upsi^F_\mathrm{M} $ is multiplication by $ \uvarphi^F_\mathrm{M}(x_F)$, the above becomes
\[
-\underline\deg_{\mathrm{M}_F} \otimes \underline\deg_{\mathrm{M}^F}  \big( (1 \otimes \underline{\alpha}_{\mathrm{M}^F} +\underline{\beta}_{\mathrm{M}_F} \otimes 1) \cdot (\mu_1\otimes \nu_1)\cdot (\mu_2\otimes \nu_2) \big).
\]
The vanishing of $\uJ_{\underline{\alpha}}(\mathrm{M}^F)$ in degree $k-1$
 further simplifies the above to the desired expression 
\[
-\underline\deg_{\mathrm{M}_F} \otimes \underline\deg_{\mathrm{M}^F}  \big( (1 \otimes \underline{\alpha}_{\mathrm{M}^F} ) \cdot (\mu_1\otimes \nu_1)\cdot (\mu_2\otimes \nu_2) \big) 
=-\underline\deg_{\mathrm{M}_F}(\mu_1 \mu_2)\ \underline\deg_{\mathrm{M}^F}(\underline\alpha_{\mathrm{M}^F}\nu_1\nu_2). \qedhere
\]
\end{proof}

%\begin{lemma}
%Let $F$ be a nonempty proper flat of $\mathrm{M}$. Write $\mathscr{S}=\mathscr{S}' \cup \mathscr{S}''  \cup \mathscr{S}''' \cup \{\{i\}\}$.
%\begin{enumerate}[(1)]\itemsep 5pt
%\item If $F$ contains $i$ and $F \setminus i \in \underline{\mathscr{S}}_i$, then
%\[
% \uCH(\mathrm{M}_F)\otimes \uJ_{\underline{\alpha}}(\mathrm{M}^F) \cong
%  \uCH(\mathrm{M}_{F})\otimes \underline{\mathrm{H}}_{\underline{\alpha}}(\mathrm{M}^{F \setminus i}) 
%\]
%This collection $\mathscr{S}'$ is in bijection with $\mathscr{S}_i$.
%\item If $F$ contains $i$ and $F \setminus i \notin \underline{\mathscr{S}}_i$, then
%\[
% \uCH(\mathrm{M}_F)\otimes \uJ_{\underline{\alpha}}(\mathrm{M}^F) \cong
%  \uCH((\mathrm{M}\setminus i)_{F \setminus i})\otimes \uJ_{\underline{\alpha}}((\mathrm{M} \setminus i)^{F \setminus i}).
%\]
%This collection $\mathscr{S}''$ is in bijection with $\mathscr{S}^{\mathrm{M} \setminus i} \setminus \mathscr{S}^\mathrm{M}$.
%\item If $F$ does not contain $i$, then
%\end{enumerate}
%\end{lemma}

To complete the proof, we only need to show that the graded vector spaces on both sides of  (\ref{underlinedalphadecomposition}) have the same dimension, which is the next proposition. 
\begin{proposition}
As graded vector spaces, there exists an isomorphism
\begin{equation}\label{num_decomposition}
%\[
\underline{\CH}(\mathrm{M}) \cong \underline{\mathrm{H}}_{\underline{\alpha}}(\mathrm{M}) \oplus \ \bigoplus_{F \in \underline{\mathscr{C}}(\mathrm{M})}  \  \uCH(\mathrm{M}_F)\otimes \uJ_{\underline{\alpha}}(\mathrm{M}^F)[-1],
%\]
\tag{$\underline{\mathrm{D}}_3'$}
\end{equation}
where the sum is over the set $\underline{\mathscr{C}}(\mathrm{M})$ of proper flats of $\mathrm{M}$ with rank at least two.
\end{proposition}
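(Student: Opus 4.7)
I will prove the Poincar\'e series identity
\[
P_\mathrm{M}(t) = \frac{1-t^d}{1-t} + t \sum_{F \in \underline{\mathscr{C}}(\mathrm{M})} P_{\mathrm{M}_F}(t)\,\frac{1 - t^{\rk F - 1}}{1-t},
\]
where $P_\mathrm{N}(t) = \sum_k (\dim \uCH^k(\mathrm{N}))\,t^k$, by induction on $|E|$. The base cases $|E| \le 1$ are immediate, since both sides equal $1$. The right-hand side is the graded dimension of the putative decomposition, since $\dim \underline{\mathrm{H}}_{\underline{\alpha}}^k(\mathrm{M}) = 1$ for $0 \le k \le d-1$ (by Proposition \ref{PropositionAlphaDegree}) and $\dim \underline{\mathrm{J}}_{\underline{\alpha}}^k(\mathrm{M}^F) = 1$ for $0 \le k \le \rk F - 2$.

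For the inductive step, pick $i \in E$. Combining Theorem~\ref{TheoremUnderlinedDecomposition} with the isomorphisms stated following its proof, one obtains
\[
P_\mathrm{M}(t) = (1 + \epsilon t)\,P_{\mathrm{M}\setminus i}(t) + t \sum_{F \in \underline{\mathscr{S}}_i} P_{\mathrm{M}_{F \cup i}}(t)\,P_{\mathrm{M}^F}(t),
\]
where $\epsilon = 1$ if $i$ is a coloop of $\mathrm{M}$ and $\epsilon = 0$ otherwise. Both $\mathrm{M} \setminus i$ and every $\mathrm{M}^F$ for $F \in \underline{\mathscr{S}}_i$ have ground sets strictly smaller than $E$, so the inductive hypothesis rewrites $P_{\mathrm{M}\setminus i}(t)$ and $P_{\mathrm{M}^F}(t)$ in the desired form.

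To match the resulting expression with the claimed formula for $P_\mathrm{M}(t)$, partition the index set $\underline{\mathscr{C}}(\mathrm{M})$ according to whether a flat contains $i$. A flat $F' \in \underline{\mathscr{C}}(\mathrm{M})$ with $i \notin F'$ is itself a flat of $\mathrm{M} \setminus i$, and $(\mathrm{M}\setminus i)_{F'}$ coincides with $\mathrm{M}_{F'} \setminus i$; to replace the inductively generated $P_{\mathrm{M}_{F'}\setminus i}(t)$ by $P_{\mathrm{M}_{F'}}(t)$, apply Theorem~\ref{TheoremUnderlinedDecomposition} a second time, now to the smaller matroid $\mathrm{M}_{F'}$ with respect to the element $i$. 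Flats $F' \in \underline{\mathscr{C}}(\mathrm{M})$ with $i \in F'$ are of the form $F' = F \cup i$ for some $F \in \underline{\mathscr{S}}_i \cup \{\varnothing\}$ (with $F$ ranging over all flats of $\mathrm{M} \setminus i$ in the coloop case), and they correspond to the summands indexed by $\underline{\mathscr{S}}_i$.

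The main obstacle is the combinatorial bookkeeping: one must verify that the rank-shifted geometric factors $(1 - t^{\rk - 1})/(1-t)$ produced at the two stages of the nested recursion telescope precisely, and that the auxiliary term $\epsilon\, t\, P_{\mathrm{M}\setminus i}(t)$ in the coloop case accounts correctly for the new flats $F' = F \cup i$ whose rank in $\mathrm{M}$ exceeds that of $F$ in $\mathrm{M} \setminus i$ by one. All manipulations are routine polynomial identities once the correspondence between flats of $\mathrm{M}$ and the flats appearing in the two recursions has been set up, but the case analysis separating coloop and non-coloop cases must be carried out carefully.
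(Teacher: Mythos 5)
Your reduction of the proposition to a Poincar\'e-series identity is legitimate (the statement is purely about graded dimensions, and $\dim\underline{\mathrm{H}}_{\underline{\alpha}}^k(\mathrm{M})=1$ for $0\le k\le d-1$ does follow from Proposition \ref{PropositionAlphaDegree}), and your overall strategy --- induct on $|E|$, expand $P_{\mathrm{M}}$ via Theorem \ref{TheoremUnderlinedDecomposition}, apply the inductive hypothesis to $\mathrm{M}\setminus i$ and to the $\mathrm{M}^F$, and re-apply Theorem \ref{TheoremUnderlinedDecomposition} to $\mathrm{M}_{F'}$ for flats $F'$ not containing $i$ --- is the same as the paper's. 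However, the combinatorial matching that you defer as ``routine'' is the entire content of the proof, and the partial description you give of it is incorrect. When $i$ is not a coloop, it is false that every $F'\in\underline{\mathscr{C}}(\mathrm{M})$ with $i\in F'$ has the form $F\cup i$ with $F\in\underline{\mathscr{S}}_i\cup\{\varnothing\}$: the set $F'\setminus i$ is always a flat of $\mathrm{M}\setminus i$, but it need not be a flat of $\mathrm{M}$ (for instance, if $F'$ is a three-point line through $i$, then $F'\setminus i$ has closure $F'$), and in that case $F'\setminus i\notin\underline{\mathscr{S}}_i$. Such flats do not correspond to summands indexed by $\underline{\mathscr{S}}_i$ at all; their contributions $t\,P_{\mathrm{M}_{F'}}(t)\,(1-t^{\rk F'-1})/(1-t)$ must instead be matched against the summands in the inductive expansion of $P_{\mathrm{M}\setminus i}(t)$ indexed by the flats $G\in\underline{\mathscr{C}}(\mathrm{M}\setminus i)$ that are \emph{not} flats of $\mathrm{M}$, using $\mathrm{M}_{F'}=(\mathrm{M}\setminus i)_{F'\setminus i}$ and $\rk_{\mathrm{M}}(F')=\rk_{\mathrm{M}\setminus i}(F'\setminus i)$. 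Symmetrically, your treatment of the flats with $i\notin F'$ only accounts for the flats of $\mathrm{M}\setminus i$ that are flats of $\mathrm{M}$, so without this correction the two sides do not match term by term.

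This is exactly why the paper splits $\underline{\mathscr{C}}(\mathrm{M})$ into \emph{three} groups rather than two (according to whether $i\in F$, and if so whether $F\setminus i\in\underline{\mathscr{S}}_i(\mathrm{M})$), and runs the coloop case separately with the flat $E\setminus i$ playing a special role: there the extra term $t\,P_{\mathrm{M}\setminus i}(t)$ is absorbed by the summand indexed by $E\setminus i$, a flat which does \emph{not} contain $i$ and has $\mathrm{M}_{E\setminus i}$ of rank one --- not by ``new flats $F\cup i$'' as you suggest. Until this case analysis is actually carried out, the proof is incomplete, and as sketched the claimed bijection of index sets is wrong.
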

\begin{proof}
We prove the proposition using induction on the cardinality of $E$. Suppose the proposition holds for any matroid whose ground set is a proper subset of $E$. Suppose that there exists an element $i\in E$ that is not a coloop. Then the decomposition (\ref{eqn:underlined deletion decomposition})
%and the Poincar\'e duality part of Theorem \ref{TheoremChowKahlerPackage} 
implies
\[
\underline{\CH}(\mathrm{M})\cong \uCH(\mathrm{M}\setminus i) \oplus \bigoplus_{G \in \underline{\mathscr{S}}_i(\mathrm{M})} \underline{\mathrm{CH}}(\mathrm{M}_{G\cup i}) \otimes  \underline{\mathrm{CH}}(\mathrm{M}^{G})[-1],
\]
since the maps $\underline{\theta}_i$, $\underline{\theta}_i^{G\cup i}$, and $\underline{\psi}_{\mathrm{M}}^{G\cup i}$ are injective via the Poincar\'{e} duality part of Theorem \ref{TheoremChowKahlerPackage}.
By applying the induction hypothesis to the matroids $\mathrm{M}\setminus i$ and $\mathrm{M}^G$, we see that 
the left-hand side of (\ref{num_decomposition}) is isomorphic to the graded vector space
%\begin{equation}\label{equation_target}
%\[
\begin{align*}
%\underline{\CH}(\mathrm{M})\cong 
\underline{\mathrm{H}}_{\underline{\alpha}}(\mathrm{M}\setminus i) & \oplus  \bigoplus_{G\in \underline{\mathscr{C}}({\mathrm{M}\setminus i})}   \uCH\big((\mathrm{M}\setminus i)_G\big)\otimes \uJ_{\underline{\alpha}}\big((\mathrm{M}\setminus i)^G\big)[-1]\\
&\oplus   \bigoplus_{G \in \underline{\mathscr{S}}_i({\mathrm{M}})} \ \underline{\mathrm{CH}}(\mathrm{M}_{G\cup i}) \otimes \underline{\mathrm{H}}_{\underline{\alpha}}(\mathrm{M}^G)[-1] \\ &\oplus    \bigoplus_{G \in \underline{\mathscr{S}}_i({\mathrm{M}})}   \bigoplus_{F\in \underline{\mathscr{C}}({\mathrm{M}^G})}  \underline{\mathrm{CH}}(\mathrm{M}_{G\cup i}) \otimes  \uCH(\mathrm{M}^G_F)\otimes \uJ_{\underline{\alpha}}(\mathrm{M}^F)[-2].
\end{align*}
Since $i$ is not a coloop, we may replace
 $ \underline{\H}_{\underline\alpha}(\mathrm{M}\setminus i)$  by
  $\underline{\H}_{\underline\alpha}(\mathrm{M})$.

Now, we further decompose the right-hand side of (\ref{num_decomposition}) to match the displayed expression.
For this, we split the index set $\underline{\mathscr{C}}(\mathrm{M})$ into three groups:
\begin{enumerate}[(1)]\itemsep 5pt
\item $F\in \underline{\mathscr{C}}({\mathrm{M}}), i\in F, F\setminus i \in \underline{\mathscr{S}}_i({\mathrm{M}})$,
%\item $F\in \mathscr{C}({\mathrm{M}}), i\in F, F\setminus i \notin \underline{\mathscr{S}}_i({\mathrm{M}})$, $F \neq \{i\}$
\item $F\in \underline{\mathscr{C}}({\mathrm{M}}), i\in F, F\setminus i \notin \underline{\mathscr{S}}_i({\mathrm{M}})$, and
\item $F\in \underline{\mathscr{C}}({\mathrm{M}}), i\notin F$.
\end{enumerate}
%\]
%\end{equation}
%where the second direct sum is over all nonempty proper flats $F$ of $\mathrm{M}^G$. 

Suppose $F$ belongs to the first group.
In this case, we have $\uJ_{\underline\alpha}(\mathrm{M}^{F})\cong\underline{\mathrm{H}}_{\underline{\alpha}}(\mathrm{M}^{F \setminus i})$ as graded vector spaces.
Therefore, we have 
%\begin{equation}
\[
\bigoplus_{\substack{F \in \underline{\mathscr{C}}(\mathrm{M})\\ i\in F, F\setminus i \in \underline{\mathscr{S}}_i(\mathrm{M})}} \underline{\mathrm{CH}}(\mathrm{M}_{F}) \otimes \uJ_{\underline\alpha}(\mathrm{M}^{F})[-1]
\cong 
\bigoplus_{G \in \underline{\mathscr{S}}_i(\mathrm{M})} \underline{\mathrm{CH}}(\mathrm{M}_{G\cup i}) \otimes  \underline{\mathrm{H}}_{\underline{\alpha}}(\mathrm{M}^G)[-1].
\]

Suppose $F$ belongs to the second group.
%We may assume that $F$ properly  contains $i$, since otherwise $ \uJ_{\underline\alpha}(\mathrm{M}^{F})=0$.
%\end{equation}
%Assume that a proper flat $F$ of $\mathrm{M}$ strictly containing $i$ satisfies $F\setminus i\notin \underline{\mathscr{S}}_i^{\mathrm{M}}$. 
In this case,  $\mathrm{M}_F=(\mathrm{M}\setminus i)_{F\setminus i}$, 
%!!!!!\textcolor{red}{Note this is not true for $F = \{i\}$.}
and the matroids $\mathrm{M}^{F}$ and $(\mathrm{M}\setminus i)^{F\setminus i}$ have the same rank.
% and hence $\uJ_{\underline\alpha}(\mathrm{M}^{F})\cong \uJ_{\underline\alpha}\big((\mathrm{M}\setminus i)^{F\setminus i}\big)$. When $F=\{i\}$ is a flat, by definition $ \uJ_{\underline\alpha}(\mathrm{M}^{F})=0$. 
Therefore, we have
\[
\bigoplus_{\substack{F \in \underline{\mathscr{C}}(\mathrm{M})\\ i\in F, F\setminus i \notin \underline{\mathscr{S}}_i(\mathrm{M})}} \underline{\mathrm{CH}}(\mathrm{M}_{F}) \otimes \uJ_{\underline\alpha}(\mathrm{M}^{F})[-1]
\cong
\bigoplus_{G\in \underline{\mathscr{C}}({\mathrm{M}\setminus i}) \setminus \underline{\mathscr{C}}(\mathrm{M})}  \  \uCH\big((\mathrm{M}\setminus i)_G\big)\otimes \uJ_{\underline{\alpha}}\big((\mathrm{M}\setminus i)^G\big)[-1].
\]

Suppose $F$ belongs to the third group.  In this case, we apply (\ref{eqn:underlined deletion decomposition}) to 
$\mathrm{M}_F$ and get
\begin{align*}
&\bigoplus_{F \in \underline{\mathscr{C}}(\mathrm{M}), i\notin F}  \uCH(\mathrm{M}_F)\otimes \uJ_{\underline{\alpha}}(\mathrm{M}^F)[-1]  \\
\cong&\bigoplus_{F \in \underline{\mathscr{C}}(\mathrm{M}), i\notin F} \Big(\uCH\big(\mathrm{M}_F\setminus i\big)\oplus \bigoplus_{G\in \underline{\mathscr{S}}_i(\mathrm{M}_F)}\uCH(\mathrm{M}_{G\cup i})\otimes \uCH(\mathrm{M}^G_F)[-1] \Big)\otimes \uJ_{\underline{\alpha}}(\mathrm{M}^F)[-1] \\
\cong&\bigoplus_{F \in \underline{\mathscr{C}}(\mathrm{M}), i\notin F} \uCH\big(\mathrm{M}_F\setminus i\big)\otimes \uJ_{\underline{\alpha}}(\mathrm{M}^F)[-1]\oplus \bigoplus_{\substack{G\in \underline{\mathscr{S}}_i(\mathrm{M})\\ F\in \underline{\mathscr{C}}({\mathrm{M}^G})}}\uCH(\mathrm{M}_{G\cup i})\otimes \uCH(\mathrm{M}^G_F)\otimes \uJ_{\underline{\alpha}}(\mathrm{M}^F)[-2] \\
\cong &\bigoplus_{G\in \underline{\mathscr{C}}({\mathrm{M}\setminus i})\cap \underline{\mathscr{C}}({\mathrm{M}})} \uCH\big((\mathrm{M}\setminus i)_G\big)\otimes \uJ_{\underline{\alpha}}\big((\mathrm{M}\setminus i)^G\big)[-1]\oplus \bigoplus_{\substack{G\in \underline{\mathscr{S}}_i(\mathrm{M})\\ F\in \underline{\mathscr{C}}({\mathrm{M}^G})}}\uCH(\mathrm{M}_{G\cup i})\otimes \uCH(\mathrm{M}^G_F)\otimes \uJ_{\underline{\alpha}}(\mathrm{M}^F)[-2].
\end{align*} 
The decomposition \eqref{num_decomposition} follows. 

%Since $i$ is not a coloop, $\underline{\H}_{\underline\alpha}(\mathrm{M})\cong \underline{\H}_{\underline\alpha}(\mathrm{M}\setminus i)$ as graded vector spaces.
% Decompose the direct sum in \eqref{num_decomposition} into three groups: %(1) $F\in \underline{\mathscr{S}}^{\mathrm{M}}, i\in F, F\setminus i \in \underline{\mathscr{S}}_i^{\mathrm{M}}$, (2) $F\in \underline{\mathscr{S}}^{\mathrm{M}}, i\in F, F\setminus i \notin \underline{\mathscr{S}}_i^{\mathrm{M}}$, and (3) $F\in \underline{\mathscr{S}}^{\mathrm{M}}, i\notin F$.
%\begin{enumerate}
%\item $F\in \underline{\mathscr{S}}({\mathrm{M}}), i\in F, F\setminus i \in \underline{\mathscr{S}}_i^{\mathrm{M}}$,
%\item $F\in \underline{\mathscr{S}}({\mathrm{M}}), i\in F, F\setminus i \notin \underline{\mathscr{S}}_i^{\mathrm{M}}$, and
%\item $F\in \underline{\mathscr{S}}({\mathrm{M}}), i\notin F$,
%\end{enumerate}
%and apply the above three displayed equations for each group. Then we obtain an equation between the right-hand side of \eqref{num_decomposition} and the right-hand side of \eqref{equation_target}. Since we know Equation \eqref{equation_target} by induction hypothesis, Equation \eqref{num_decomposition} follows. 
%For $F\in \underline{\mathscr{S}}_i$, the matroid $\mathrm{M}_{F\cup i}$

Suppose now that every element of $E$ is a coloop of $\mathrm{M}$; that is, $\mathrm{M}$ is a Boolean matroid. We fix an element $i\in E$. The decomposition (\ref{eqn:underlined deletion decomposition coloop}) and the Poincar\'{e} duality part of Theorem \ref{TheoremChowKahlerPackage} imply
\[
\underline{\CH}(\mathrm{M})\cong \uCH(\mathrm{M}\setminus i)\oplus \uCH(\mathrm{M}\setminus i)[-1] \oplus \bigoplus_{G \in \underline{\mathscr{S}}_i(\mathrm{M})} \underline{\mathrm{CH}}(\mathrm{M}_{G\cup i}) \otimes  \underline{\mathrm{CH}}(\mathrm{M}^{G})[-1]. 
\]
The assumption that $i$ is a coloop implies that $\underline{\mathscr{S}}_i(\mathrm{M}) \cap \underline{\mathscr{C}}(\mathrm{M}) =\underline{\mathscr{C}}(\mathrm{M}\setminus i)$.  The induction hypothesis applies to the matroids $\mathrm{M}\setminus i$ and $\mathrm{M}^G$, and hence
the left-hand side of (\ref{num_decomposition}) is isomorphic to 
%\begin{equation}\label{equation_target2}
%\[
\begin{align*}
& \underline{\mathrm{H}}_{\underline{\alpha}}(\mathrm{M}\setminus i) \oplus \ \bigoplus_{G\in \underline{\mathscr{C}}(\mathrm{M}\setminus i)}  \  \uCH\big((\mathrm{M}\setminus i)_G\big)\otimes \uJ_{\underline{\alpha}}\big((\mathrm{M}\setminus i)^G\big)[-1]\\
&\oplus \underline{\mathrm{H}}_{\underline{\alpha}}(\mathrm{M}\setminus i)[-1] \oplus \ \bigoplus_{G\in \underline{\mathscr{C}}(\mathrm{M}\setminus i)}  \  \uCH\big((\mathrm{M}\setminus i)_G\big)\otimes \uJ_{\underline{\alpha}}\big((\mathrm{M}\setminus i)^G\big)[-2]\\
&\oplus \bigoplus_{G \in \underline{\mathscr{S}}_i(\mathrm{M})} \underline{\mathrm{CH}}(\mathrm{M}_{G\cup i}) \otimes  \Big(\underline{\mathrm{H}}_{\underline{\alpha}}(\mathrm{M}^G) \oplus \ \bigoplus_{F\in \underline{\mathscr{C}}(\mathrm{M}^G)}  \  \uCH(\mathrm{M}^G_F)\otimes \uJ_{\underline{\alpha}}(\mathrm{M}^F)[-1]\Big)[-1].
\end{align*}

Now, we further decompose the right-hand side of (\ref{num_decomposition}) to match the displayed expression.
For this, we split the index set $\underline{\mathscr{C}}(\mathrm{M})$ into three groups:
\begin{enumerate}[(1)]\itemsep 5pt
\item $F\in \underline{\mathscr{C}}({\mathrm{M}}), i\in F$,
%\item $F\in \mathscr{C}({\mathrm{M}}), i\in F, F\setminus i \notin \underline{\mathscr{S}}_i({\mathrm{M}})$, $F \neq \{i\}$
\item $F\in \underline{\mathscr{C}}({\mathrm{M}}), F=E \setminus i$, and
\item $F\in \underline{\mathscr{C}}({\mathrm{M}}), F \in \underline{\mathscr{S}}_i({\mathrm{M}})$.
\end{enumerate}

%\]
%\end{equation}
%Using similar arguments as in the case when $i$ is not a coloop, we have
If $F$ belongs to the first group, then $\uJ_{\underline\alpha}(\mathrm{M}^{F})\cong\underline{\mathrm{H}}_{\underline{\alpha}}(\mathrm{M}^{F \setminus i})$, and hence
\[
\bigoplus_{F\in \underline{\mathscr{C}}(\mathrm{M}), i\in F}  \  \uCH(\mathrm{M}_F)\otimes \uJ_{\underline{\alpha}}(\mathrm{M}^F)[-1]
\cong
\bigoplus_{G \in \underline{\mathscr{S}}_i(\mathrm{M})} \underline{\mathrm{CH}}(\mathrm{M}_{G\cup i}) \otimes  \underline{\mathrm{H}}_{\underline{\alpha}}(\mathrm{M}^G)[-1].
\]
If $F$ is the flat $E \setminus i$,  we have
\[
\underline{\mathrm{H}}_{\underline{\alpha}}(\mathrm{M})\oplus \uCH(\mathrm{M}_{E\setminus i})\otimes \uJ_{\underline{\alpha}}(\mathrm{M}^{E\setminus i})[-1]
\cong
\underline{\mathrm{H}}_{\underline{\alpha}}(\mathrm{M}\setminus i)\oplus \underline{\mathrm{H}}_{\underline{\alpha}}(\mathrm{M}\setminus i)[-1].
\]
If $F$ belongs to the third group, we apply (\ref{eqn:underlined deletion decomposition coloop}) to $\mathrm{M}_F$ and get
\begin{align*}
&\bigoplus_{\substack{F\in \underline{\mathscr{C}}(\mathrm{M})\\ F \in \underline{\mathscr{S}}_i(\mathrm{M})}}  \  \uCH(\mathrm{M}_F)\otimes \uJ_{\underline{\alpha}}(\mathrm{M}^F)[-1]\\
\cong
&\bigoplus_{\substack{F\in \underline{\mathscr{C}}(\mathrm{M})\\F \in \underline{\mathscr{S}}_i(\mathrm{M})}}  \  \Big(\uCH(\mathrm{M}_F\setminus i)\oplus \uCH(\mathrm{M}_F\setminus i)[-1]\oplus \bigoplus_{G\in \underline{\mathscr{S}}_i(\mathrm{M}_F)}\uCH(\mathrm{M}_{G\cup i})\otimes \uCH(\mathrm{M}^G_F)[-1]\Big)\otimes \uJ_{\underline{\alpha}}(\mathrm{M}^F)[-1]\\
\cong & \bigoplus_{\substack{G \in \underline{\mathscr{C}}(\mathrm{M})\\ G \in \underline{\mathscr{S}}_i(\mathrm{M})}}  \  \uCH(\mathrm{M}_G\setminus i)\otimes \uJ_{\underline{\alpha}}(\mathrm{M}^G)[-1]\oplus \bigoplus_{\substack{G \in \underline{\mathscr{C}}(\mathrm{M})\\ G \in \underline{\mathscr{S}}_i(\mathrm{M})}}  \  \uCH(\mathrm{M}_G\setminus i)\otimes \uJ_{\underline{\alpha}}(\mathrm{M}^G)[-2] \ \oplus\\
& \bigoplus_{\substack{G\in \underline{\mathscr{S}}_i(\mathrm{M}) \\ F\in \underline{\mathscr{C}}(\mathrm{M}^G)}}\uCH(\mathrm{M}_{G\cup i})\otimes \uCH(\mathrm{M}^G_F)\otimes \uJ_{\underline{\alpha}}(\mathrm{M}^F)[-2]. 
\end{align*}
The decomposition \eqref{num_decomposition} follows. 
\end{proof}

\begin{remark}
The decomposition of graded vector spaces appearing in \cite[Theorem 6.18]{AHK}
specializes to decompositions of $\uCH(\mathrm{M})$ and of $\CH(\mathrm{M})$, where the latter goes through
Remark \ref{RemarkAlternative}.  At the level of Poincar\'e polynomials, these decompositions coincide with those of 
Theorem \ref{TheoremSimplexDecomposition}.  However, the subspaces appearing in the decompositions
are not the same.  In particular, the decompositions in \cite[Theorem 6.18]{AHK} are not orthogonal, and they are not compatible with the
$\underline{\mathrm{H}}_{\underline{\alpha}}(\mathrm{M})$-module structure on $\uCH(\mathrm{M})$
or the $\mathrm{H}_{\alpha}(\mathrm{M})$-module structure on $\CH(\mathrm{M})$.
\end{remark}

\bibliography{refs}
\bibliographystyle{amsalpha}

\end{document}